   \def\MR#1{}
\newtheorem{thm}{Theorem}[section]
\newtheorem{lem}[thm]{Lemma}
\newtheorem{prop}[thm]{Proposition}
\theoremstyle{definition}
\theoremstyle{remark}
\numberwithin{equation}{section}
\theoremstyle{definition}
\theoremstyle{remark}
\newtheorem*{remark}{Remark}
 \DeclareMathOperator*{\esssup}{ess\,sup}
\newcommand{\RN}[1]{%
  \textup{\uppercase\expandafter{\romannumeral#1}}%
}
\newcommand{\bff}{\mathbf{f}}
\newcommand{\bfx}{\mathbf{x}}
\newcommand{\bfu}{\mathbf{u}}
\newcommand{\bfe}{\mathbf{e}}
\newcommand{\D}{\mathrm{D}}   
\newcommand{\ddt}{\frac{\rm d}{\rm dt}}
\newcommand{\bfv}{\mathbf{v}}
\newcommand{\bfw}{\mathbf{w}}
\newcommand{\bfV}{\mathbf{V}}
\def \lb {\langle}
\def \rb {\rangle}
\newcommand{\bfn}{\mathbf{n}}
\newcommand{\vbf}{\mathbf{v}}
\newcommand{\ubf}{\mathbf{u}}
\newenvironment{Andrearev}{\color{blue}}{\color{black}}
\newcommand{\BBB}{\begin{Andrearev}}
\newcommand{\FFF}{\end{Andrearev}}
\begin{document}

\date{\today}

\title[Data Assimilation for the 3D Ladyzhenskaya model]{Continuous data assimilation for the 3D Ladyzhenskaya model: analysis and computations}


\author{Yu Cao}
\address{Department of Mathematics, Florida State University, FL 32304, USA}
\email{ycao2@fsu.edu}
\author{Andrea Giorgini}
\address{Department of Mathematics, Indiana University Bloomington, IN 47405, USA}
\email{agiorgin@iu.edu}
\author{Michael Jolly}
\address{Department of Mathematics, Indiana University Bloomington, IN 47405, USA}
\email{msjolly@indiana.edu}
\author{Ali Pakzad}
\address{Department of Mathematics, Indiana University Bloomington, IN 47405, USA}
\email{apakzad@iu.edu}

\keywords{Data assimilation, Ladyzhenskaya model, non-Newtonian fluids, large eddy simulation}

\subjclass{34D06, 76A05, 76D03}

\maketitle
\setcounter{tocdepth}{1}

\begin{abstract}

We analyze continuous data assimilation by nudging for the 3D Ladyzhenskaya equations. The analysis provides conditions on the spatial resolution of the observed data that guarantee synchronization to the reference solution associated with the observed, spatially coarse data. This synchronization holds even though it is not known whether the reference solution, with initial data in $L^2$, is unique; a particular reference solution is determined by the observed, coarse data.  The efficacy of the algorithm in both 2D and 3D is demonstrated by numerical computations. 

\end{abstract}

\section{Introduction}

\subsection{Data Assimilation} 
The insertion of coarse grain observational measurements into a mathematical model is called continuous data assimilation.  This can provide a more accurate forecast in  applications ranging from the medical, environmental and biological sciences,  \cites{Kost2, Kost-1}, to imaging, traffic control, finance and oil exploration \cite{Asch-Data2016}. 
Bayesian and variational approaches (Kalman filters, 3DVar and 4DVar) are based on discrete observations in time and often used to treat errors in both observed data and model itself \cites{Blomker-Accuracy2013, Branicki-AnInformation2015, 
Branicki-Accuracy2018, Majda-Filtering2012,Kalnay-Atmospheric2003,Kelly-Well2014,Law-Data2015,Reich-Probabilistic2015}.   They are widely used in practice, but difficult to analyze mathematically, especially for physical models governed by nonlinear differential equations \cites{Harlim-Catastrophic2010,Tong-1Nonlinear2016, Tong-2Nonlinear2016}.

Nudging is a straightforward, deterministic  approach to data assimilation.  While its origin can be traced back to \cite{Hoke-TheInitialization1976}, it has been more recently applied in the context of synchronizing chaotic dynamical systems.  See \cites{Auroux-ANudging2008, Pecora-Synchronization2015} for a more complete history, and \cite{Vidard-Determination2003} for a comparison with Kalman filtering.  In essence, this method assumes that an accurate initial condition $\bfu(0)$ is not known for a particular model
$$ 
\frac{\mathrm{d} \bfu}{\mathrm{d}t} =\mathbf{F}(\bfu),
$$
but data from a reference solution, interpolated at spatial resolution $h$ is available, denoted as $I_h \mathbf{u}(t)$. Those observations are used in an auxiliary system
\begin{align}\label{nudgedODE} 
\frac{\mathrm{d} \bfv}{\mathrm{d}t} =\mathbf{F}(\bfv) -\mu I_h(\bfv-\bfu)\;, \quad \bfv(0)=0
\end{align}
to drive $\|\bfv-\bfu\| \to 0$ at an exponential rate, provided $\mu$ is sufficiently large and $h$ is sufficiently small.  Since derivatives are not required of the data in the approach, it can be used with more common types of observations, such as nodal values.  

Rigorous analysis of the nudging algorithm for partial differential equations in fluid mechanics began with the work of Azouani, Olson and Titi.  They estimated threshold values for the relaxation parameter $\mu$ and data resolution $h$ for the  2D NSE \cite{Azouani-Continuous2014}.
 Since then, nudging has been rigorously shown to synchronize with reference solutions in a variety of applications, including the 2D Rayleigh-B\'enard problem \cites{Farhat-Continuous2015, Farhat-Continuous2017, Farhat-Data2020}, surface quasigeostrophic equation \cites{Jolly-Adata2017,Jolly-Continuous2019},  Korteweg--de Vries equation \cite{Jolly-Determining2017}, 2D  magnetohydrodynamic system \cite{Biswas-Continuous2018}, 3D Brinkman-Forchheimer-extended Darcy model \cite{Markowich-Continuous2019}, and 3D primitive equations \cite{Pei-Continuous2019}.  In each case, threshold values for $\mu$ and $h$ had to be established for both the well-posedness of the corresponding system \eqref{nudgedODE} as well as for synchroniziation. In some works it has been shown that it is sufficient to nudge with data in only a subset of the system variables
\cites{Farhat-Continuous2015, Farhat-Continuous2017, Farhat-Data2020, Farhat-Abridged2016, Farhat-Data2016}.    While the nudging algorithm does not lend itself to directly treat error in the model, the effect of error in the observed data has been studied in \cites{Bessaih-Continuous2015, Jolly-Continuous2019}.

\subsection{The Ladyzhenskaya model} 
The motion of an homogeneous, incompressible, viscous fluid in a domain $\Omega \subset \mathbb{R}^3$ is classically described by the momentum equation and the incompressibility constraint, that read as
\begin{equation}
 \label{motion}
 \begin{split}
\partial_t \bfu+   (\bfu \cdot \nabla) \bfu   - \nabla \cdot  \mathbf{T}(\D \bfu) \ +  \nabla P  & = \bff,\\ 
\nabla \cdot \bfu  &= 0,
\end{split}
\qquad \qquad \qquad \text{in } \, \Omega \times (0,\infty),
\end{equation}
where $\bfu$ is the fluid velocity, $P$ is the fluid pressure. Here, $\D \bfu $ denotes the symmetric part of the gradient of $\bfu$. In particular, the Navier-Stokes model corresponds to the case of Newtonian fluids characterized by the (linear) Stokes' law $\mathbf{T}(\D \bfu)= \nu_0 \D \bfu$. The lack of a global regularity result makes the analysis of the nudging algorithm problematic for the 3D NSE, though a recent work provides a condition on observed data which deals with this issue  \cite{biswas2020continuous}. In this work we consider a family of 3D globally well-posed modified Navier–Stokes equations, namely the Ladyzhenskaya  model. In the mid-1960s,  a number of modifications to the Navier-Stokes equations were suggested by  Ladyzhenskaya  for the description of the dynamics of viscous fluids when velocity gradients are large \cites{L67,L69,L98}. These equations form an important mathematical model describing the flow behavior of a wide class of non-Newtonian fluids \cites{MNRR, MR2005,MR1348587}.  In this work we consider one particular model (see equations  \eqref{Lady}), where the Cauchy tensor in \eqref{motion} takes the following nonlinear form
\begin{equation}
\label{T-nonlinear}
\mathbf{T} (\D \bfu)=   - \nabla \cdot  \left( 2\nu_0  + 2\nu_1 |\D \bfu|_F^{p-2} \right)\, \D \bfu.
\end{equation}
The above  relation is commonly used for non-Newtonian fluids with shear dependent viscosity, i.e. the dynamic viscosity depends on $|\D \bfu|_F^2$.  The model corresponding to  $p=2$  reduces to the Navier-Stokes equations (NSE) with kinematic viscosity equal to $\left( \nu_0+\nu_1\right)$.   For $p=3$,  it is mathematically equivalent to  the Smagorinsky model \cite{S63} and the NSE with the von Neumann Richtmyer artificial viscosity for shocks \cite{VR50}.

There are various reasons to consider  the Ladyzhenskaya models instead of the Navier-Stokes equations.  In the first place,  the laws of conservation of mass and momentum provide an undetermined system of partial differential equations for the velocity, pressure, and stress tensor. In general, this system of equations is not closed until the stress tensor, which represents all the internal forces, is related to the fluid velocity. Internal forces, and therefore also the stress tensor, must depend on local velocity differences  and some combination of derivatives of velocity, i.e., the deformation tensor. The simplest relation is a linear law between stress and deformation, which leads to the Navier-Stokes equations, see \cite{S59} for details.  This linear relation is only an approximation for a real fluid and schematically the  stress and deformation are related nonlinearly,  especially for large deformations.  A specific  nonlinear mathematical relationship between the stress and deformation can be derived from Stokes' hypotheses\footnote{Stokes introduced a series of requirements which together serves to define a ordinary fluid such as water and air \cite{S59}.}. If one retains some of  these nonlinear terms, we arrive at the Ladyzhenskaya model considered here. We refer the interested reader to \cites{MNRR, MR2005,MR1348587} for more details.  

Secondly and more  from a practical engineering point of view, the study of the Ladyzhenskaya equations are related to the field of turbulence modeling.  For some values of $p$, such as $p=3, 4$, the Ladyzhenskaya model considered here is equivalent to those of  popular turbulence models,  such as large eddy simulation (LES) and zero-equation models. In both applications and turbulence modeling, the behavior of averaged quantities are most important and often simulated. 
To do so,  the quantities describing the flow are decomposed into its averaged and fluctuating quantities. However,  averaging the  NSE yields a non-closed system; to close the system, one must provide the relationship between the fluctuating and the averaged quantities. There are a  wide range of closure assumptions which are known collectively as turbulence closure models. Two examples that are  widely used  are the  zero equation model (or algebraic model) and large eddy simulation, for more details see e.g., \cites{P00, L92}. The main feature of these models is that the non-closed part  (known as the Reynolds stresses), which represents the contribution of small scales in the system,  is related to the derivatives of the averaged quantities.  See \cites{BIL06, J04} for more details on the mathematics of large eddy simulation. 

Finally,  from the theoretical point of view, while the well-posedness has not been proven for the Navier-Stokes equations in three space dimensions, several results of existence, uniqueness and regularity of global-in-time solutions of the Ladyzhenskaya model have been proved in the last decades \cites{L67,L68,L69,L98}. This  provides a firm mathematical foundation for the study of (\ref{Lady}). 

\subsection{Results in this paper}
In this paper we develop a comprehensive study based on the theoretical analysis and large eddy simulation of the nudged system \eqref{nudgedODE} corresponding to the Ladyzhenskaya model \eqref{motion}-\eqref{T-nonlinear} (see \eqref{Lady-DA}) with both no-slip and periodic boundary conditions. 

In the no-slip case, we first use the Schauder fixed point theorem to prove that the nudged system has a unique global weak solution with $\bfu_0 \in L^2_\sigma(\Omega)$ provided $p\ge 5/2$.  Unlike some treatments of the nudged system for other models, this approach does not require $\mu$ to be large, nor $h$ to be small.  We then find a threshold value $\mu^*$ in terms of  $p$, $\nu_0$, $\nu_1$,
domain size and the Grashof number (see \eqref{Grashof}), such that for $\mu \ge \mu^*$ and $h$ correspondingly small, synchronization is guaranteed.  In the case of periodic boundary conditions, the existence of global weak solutions to the Ladyzhenskaya model over the wider range $p\ge11/5$, originating from $\bfu_0 \in L^2_\sigma(\Omega)$, has been established in \cite{MNRR}.  These weak solutions are not known to be unique, unless the initial condition is more regular (see \cite{MNRR}*{Theorem 4.37} for $\bfu_0 \in H^1_\sigma(\Omega)$, and also \cite{bulivcek2019uniqueness} for $\bfu_0 \in W^{1,p}_\sigma(\Omega)$ in the no-slip case).
Nonetheless, any one such weak solution becomes more regular after some time.  We prove, for the endpoint case $p=11/5$, a time averaged bound in  $L^{\frac{11}{5}}(\bar{t},\infty;W^{1,\frac{33}{5}}(\Omega))$ for the solution to the Ladyzhenskaya model, where  $\bar t$ is suitably chosen. That bound is then used to prove the synchronization of the nudged solution to the solutions of the Ladyzhenskaya model. In contrast to the previous cases studied in literature when uniqueness of the reference solution holds, the novelty of our result is that
synchronization takes place even without uniqueness of the reference solution for $11/5 \le p \le 5/2$.   
More precisely, for each reference solution $\bfu$ corresponding to an initial datum $\bfu_0 \in L^2_\sigma(\Omega)$, the nudged solution $\bfv_{\bfu}$ converges to $\bfu$ at an  exponential rate for large time. As a consequence of our analysis, it is worth concluding that if two reference solutions $\bfu$ and $\widetilde{\bfu}$ are such that $I_h(\bfu)(t)=I_h(\widetilde{\bfu})(t)$ 
as $t\to \infty$, then $\| \bfu-\widetilde{\bfu}\|\rightarrow 0$ as $t\rightarrow \infty$, i.e., the model has a finite number of {\it determining modes} for $11/5 \le p$.

We demonstrate the efficacy of the algorithm by
extensive computational studies. Numerical work with other fluid systems has shown that the nudging algorithm achieves synchronization with data that is much more coarse than required by the rigorous estimates \cites{Farhat-Assimilation2018,Altaf-Downscaling2017,Hudson-Numerical2019,Gesho-Acomputational2016}. We find this is also the case for the Ladyzhenskaya model with periodic boundary conditions, for which we achieve exponential convergence to machine precision with $h\approx 0.1$. Most of our computations are done for the case where $p=3$ (Smagorinsky model), corresponding to   large eddy simulation \cite{S63}.  Though for periodic boundary conditions we present the analysis for a threshold value of $\mu$ only in the endpoint case  $p=11/5$, our numerical computations, show virtually no sensitivity to $p$ for two choices of $\mu$.  Finally we test an abridged nudging scheme which uses data only for the horizontal components of velocity.  We present evidence that synchronization still  holds for that scheme, though at a slower rate for the third component of velocity and pressure.

\subsection*{Organization of this paper} 
In section 2, we introduce the inequalities  and preliminary results used in the analysis.  Section 3 provides background on the Ladyzhenskaya model.   Later, in sections 4 and 5, we state and prove our main results, in which we give conditions under which the approximate solutions, obtained by the data assimilation algorithm, converge to the solution of  the Ladyzhenskaya  equations.  Numerical experiments, demonstrating and extending beyond the analytical results, are described in section 6. 

\section{Notation and Preliminaries}
 
 Let $\Omega \subset \mathbb{R}^d,  \, d = 2,3$, be  a bounded open Lipschitz  domain with  volume $|\Omega|$ and let $p \in [1 , \infty]$.  The
Lebesgue space $L^p(\Omega)$ is the space of all measurable functions $\bfv$ on $\Omega$ for which
$$\|\bfv\|_{L^p} :=\left( \int_{\Omega} |\bfv (\bfx)| ^p\, d\bfx \right)^{\frac{1}{p}} \, < \infty \hspace{1cm} \text{if} \hspace{0.2cm} p \in [1 , \infty),$$
$$\|\bfv\|_{L^{\infty}} := \esssup_{\bfx \in \Omega}  |\bfv (\bfx)| < \infty \hspace{1cm} \text{if} \hspace{0.2cm} p  = \infty. $$
The $ L^2$  norm and inner product will be denoted by $\|\cdot\|$ and $( \cdot ,  \cdot)$, respectively. The space $\dot{L}^2(\Omega)$ will consist of square integrable functions with zero spatial average.  
Let $\bfV$  be a Banach space of functions defined on $\Omega$ with the associated norm $\| \cdot \|_\bfV$. We denote by $L^p(a, b; \bfV)$,  $p \in [1 , \infty]$,  the  Bochner  space of  measurable \ functions $\bfv : (a, b) \rightarrow  \bfV$ such that
$$
\|\bfv\|_{L^p (a , b ; \bfV)} :=\left( \int_a^b \|\bfv(t)\|_\bfV ^p\, dt \right)^{\frac{1}{p}} \, < \infty \hspace{1cm} \text{if} \hspace{0.2cm} p \in [1 , \infty),$$
$$\|\bfv\|_{L^{\infty} (a , b ; \bfV)} := \esssup_{t  \in (a , b)} \|\bfv( t )\|_\bfV < \infty \hspace{1cm} \text{if} \hspace{0.2cm} p  = \infty. 
$$
The space  $W_0^{1,p}(\Omega)$ consists of all functions in
$W^{1,p}(\Omega)$ that vanish on the boundary $\partial \Omega$ (in the sense of traces)
$$ W_0^{1,p} = \big\{ \bfv:   \hspace{0.1cm}\bfv \in  W^{1,p}(\Omega)  \hspace{0.3cm} \text{and} \hspace{0.3cm}\bfv |_{\partial \Omega}=0  \big\}.$$
We introduce the Banach spaces of solenoidal functions
\begin{align*}
&L^2_\sigma (\Omega)= \lbrace \bfv:  \hspace{0.1cm}\bfv \in  L^2(\Omega)\text{,} \hspace{0.3cm} \nabla \cdot \bfv = 0  \hspace{0.3cm} \text{and} \hspace{0.3cm}  \bfv \cdot \bfn |_{\partial \Omega}=0
\rbrace,\\
&W_{\sigma}^{1,p}(\Omega) = \big\{ \bfv:   \hspace{0.1cm}\bfv \in  W^{1,p}(\Omega) \text{,} \hspace{0.3cm} \nabla \cdot \bfv = 0  \hspace{0.3cm} \text{and} \hspace{0.3cm} \bfv |_{\partial \Omega}=0  \big\}, 
\end{align*}
which are equipped with the same norms as $L^2(\Omega)$ and $W_0^{1,p}(\Omega)$, respectively.   We denote by  $\left( W_\sigma^{1,p}(\Omega) \right)'$ the dual space of $W_{\sigma}^{1,p}(\Omega)$.  We recall the following  inclusions for $p\geq2$
$$
W_{\sigma}^{1,p}(\Omega) \subset L_\sigma^2(\Omega) \subset  \left( W_\sigma^{1,p}(\Omega) \right)'  \hspace{0.3cm} \text{if} \hspace{0.3cm} \frac{2d}{d+2} \leq p < \infty,
$$
where these injections are continuous, dense and   compact. 
For matrix $A = (a_{ij})_{i,j=1}^3$, the Frobenius norm of the matrix $A$ is given by
$$
|A|_F = \left( \sum_{i,j =1}^3 (a_{ij})^2\right)^{\frac{1}{2}}=\left( A:A \right)^\frac12 . 
$$
The data assimilation method requires that the observational measurements $I_h(u)$ be given as linear interpolant observables satisfying $I_h : L^2(\Omega) \rightarrow L^2 (\Omega)$ such that
\begin{equation}\label{I_h}
\begin{split}
\|I_h \varphi \| & \leq c_I \|\varphi\|,  \hspace{1cm}   \forall\, \varphi \in  L^2(\Omega),  \\
 \|\varphi  - I_h\, \varphi\| & \leq c_0 \, h \| \varphi\|_{H^1(\Omega)}, \hspace{1cm}   \forall\, \varphi \in  H^1(\Omega).
\end{split}
\end{equation}
One example of such   interpolation operators includes projection onto Fourier modes with wave numbers $|k|\le 1/h$.   More physical examples are the volume elements and constant finite element interpolation \cite{Jones-Determining1992}.

\subsection*{Inequalities in Banach and Hilbert Spaces} We recall here some well-known  inequalities in Banach and Hilbert spaces  which can be found in the classical literature (see, e.g., \cites{A75, B10}).   Let $1 \leq p \leq\infty$,  we denote by $p'$ the conjugate exponent,
$\frac{1}{p} + \frac{1}{p'} = 1$.  Assume that $f \in L^p$ and $g \in L^{p'}$ with
$1\leq p \leq \infty$. Then
\begin{equation}\label{Holder}
 \tag{H\"older inequality}
\|fg\|_{L^1} \leq \|f\|_{L^p}\, \|g\|_{L^{p'}}.
\end{equation} 
Moreover, for  any  $a, b \geq 0$ and $\lambda > 0$ we have
\begin{equation}\label{Young}
 \tag{Young inequality}
a b \leq \lambda  \,a^p +  \left( p \, \lambda\right)^{- \frac{p'}{p}}\frac{ 1 }{p' } \, b^{p'}.
\end{equation}
Suppose $1 < p < \infty$, there exist two constants $c_{\text{P}}$ and $c_{\text{K}}$ such that for any $\bff \in W^{1, p}_0 (\Omega)$ 
\begin{equation}\label{Poincare}
 \tag{Poincar\'e inequality}
 \| \bff \|_{L^p} \leq c_{\text{P}} \| \nabla \bff \|_{L^p},
\end{equation}
and 
\begin{equation}\label{Korn}
 \tag{Korn inequality}
 \| \nabla \bff \|\leq \sqrt{2} \| \D \bff\|, \quad \text{if } p=2, \quad 
  \| \nabla \bff \|_{L^p}
 \leq c_{\text{K}} \| \D \bff \|_{L^p}, \quad \text{if } p\neq 2,
\end{equation}
where $ \D \bff= \frac12 \left[ \left( \nabla \bff \right) + \left( \nabla \bff \right)^T \right]$. The constants $c_{\text{P}}$ and $c_{\text{K}}$ depend only on $p$ and $\Omega$.
In the sequel, we will make use of the classical embedding theorems for Sobolev spaces
\begin{equation}\label{Embedding}
 \tag{Sobolev embedding}
 \begin{split}
&W^{1,2}(\Omega)  \hookrightarrow L^6 (\Omega),\\  
 &W^{1,3}(\Omega) \hookrightarrow  L^p (\Omega), \quad \forall \, p \in [1, \infty).
 \end{split}
\end{equation}
We recall the interpolation inequalities for Lebesgue and Sobolev spaces. 
Let $\bff \in L^p \cap L^q,$ with $1 \leq p , q \leq \infty$. Then, for all $r$ such that
$$\frac{1}{r} = \frac{\theta}{p} + \frac{1 - \theta}{q}, \hspace{0.5cm} 0 \leq \theta \leq1,$$
it follows that $\bff \in L^r$ and 
\begin{equation}\label{Interpolation}
\tag{Lebesgue interpolation inequality}
\| \bff\|_r \leq \| \bff\|^{\theta}_p \,  \| \bff\|^{1- \theta}_q.
\end{equation}
In addition, for any $\bff \in  H^1(\Omega)$, we have 
\begin{equation}\label{Ladyzhenskaya}
 \tag{Ladyzhenskaya’s inequality}
 \begin{split}
& \|\bff\|_{L^4} \leq C_L \|\bff\|^{\frac{1}{4}} \, \|\nabla \bff\|^{\frac{3}{4}}. 
 \end{split}
\end{equation}

We need the following fundamental results (see, e.g., \cites{A75,S87}).
\begin{thm}[\textbf{Schauder fixed-point theorem}] \label{Schauder fixed-point}
Let $\mathcal{X}$ be a Banach space, and let  $\mathcal{A}$ be a nonempty closed convex set in  $\mathcal{X}$.  Let $\mathcal{F}: \mathcal{A} \rightarrow \mathcal{A} $ be a continuous  map such that $\mathcal{F} ( \mathcal{A}) \subset \mathcal{K} $, where  $\mathcal{K}$ is a compact subset of $\mathcal{A}$. Then $\mathcal{F}$ has a fixed point in $\mathcal{K}$.
\end{thm}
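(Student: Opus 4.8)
The plan is to deduce this from Brouwer's finite-dimensional fixed-point theorem, which I take as known, via Schauder's finite-dimensional approximation scheme. First I would reduce to the case of a \emph{compact} convex domain. Set $\mathcal{C} = \overline{\mathrm{conv}}(\mathcal{K})$, the closed convex hull of $\mathcal{K}$. By Mazur's theorem the closed convex hull of a compact subset of a Banach space is again compact, so $\mathcal{C}$ is compact; moreover $\mathcal{K} \subset \mathcal{A}$ with $\mathcal{A}$ closed and convex, whence $\mathcal{C} \subset \mathcal{A}$. Since $\mathcal{F}(\mathcal{C}) \subset \mathcal{F}(\mathcal{A}) \subset \mathcal{K} \subset \mathcal{C}$, the restriction of $\mathcal{F}$ is a continuous self-map of the nonempty compact convex set $\mathcal{C}$, and any fixed point automatically lands in $\mathcal{F}(\mathcal{C}) \subset \mathcal{K}$. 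It therefore suffices to show that a continuous self-map of a compact convex set has a fixed point.

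Next I would construct the Schauder projection onto a finite-dimensional polytope. Fix $\varepsilon > 0$. By compactness choose a finite $\varepsilon$-net $y_1, \dots, y_N \in \mathcal{C}$, so the balls $B(y_i, \varepsilon)$ cover $\mathcal{C}$, and set $\mu_i(x) = \max\{0, \varepsilon - \|x - y_i\|\}$ together with $\lambda_i(x) = \mu_i(x)/\sum_{j} \mu_j(x)$, the denominator being strictly positive on $\mathcal{C}$. Define
\begin{equation*}
P_\varepsilon(x) = \sum_{i=1}^N \lambda_i(x)\, y_i,
\end{equation*}
which is continuous, maps $\mathcal{C}$ into the finite-dimensional polytope $\mathcal{C}_\varepsilon := \mathrm{conv}\{y_1, \dots, y_N\} \subset \mathcal{C}$, and satisfies $\|P_\varepsilon(x) - x\| \le \sum_i \lambda_i(x)\|y_i - x\| < \varepsilon$, since $\lambda_i(x) \ne 0$ forces $\|x - y_i\| < \varepsilon$. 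The composition $F_\varepsilon := P_\varepsilon \circ \mathcal{F}$ is then a continuous map of $\mathcal{C}_\varepsilon$ into itself. As $\mathcal{C}_\varepsilon$ is a compact convex subset of the finite-dimensional span of the $y_i$, Brouwer's theorem yields a point $x_\varepsilon \in \mathcal{C}_\varepsilon$ with $P_\varepsilon(\mathcal{F}(x_\varepsilon)) = x_\varepsilon$.

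Finally I would pass to the limit. Taking $\varepsilon = 1/n$ produces $x_n \in \mathcal{C}$ with $\|x_n - \mathcal{F}(x_n)\| = \|P_{1/n}(\mathcal{F}(x_n)) - \mathcal{F}(x_n)\| < 1/n$. Compactness of $\mathcal{C}$ gives a subsequence $x_{n_k} \to x^* \in \mathcal{C}$; continuity of $\mathcal{F}$ gives $\mathcal{F}(x_{n_k}) \to \mathcal{F}(x^*)$, and letting $k \to \infty$ in the displayed inequality forces $\mathcal{F}(x^*) = x^*$. By the reduction above, $x^* \in \mathcal{K}$, completing the argument.

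The main obstacle is not any single computation but the two foundational inputs: Brouwer's fixed-point theorem (which itself requires degree theory, simplicial/Sperner combinatorics, or homology, and is the genuine source of depth here) and the compactness of the closed convex hull $\overline{\mathrm{conv}}(\mathcal{K})$. Granting those, the only delicate points are verifying the uniform estimate $\|P_\varepsilon(x)-x\|<\varepsilon$ for the Schauder projection and ensuring that the limit $x^*$ is an honest fixed point rather than merely an approximate one, which the continuity of $\mathcal{F}$ secures.
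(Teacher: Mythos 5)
Your argument is correct, but there is nothing in the paper to compare it against: the paper states this as Theorem~2.1 among its ``fundamental results'' and gives no proof at all, simply citing the classical literature (the references tagged A75 and S87). What you have written is the standard textbook proof of Schauder's theorem --- reduction to the compact convex set $\overline{\mathrm{conv}}(\mathcal{K})$ via Mazur's theorem, the Schauder projection $P_\varepsilon$ built from a partition of unity subordinate to a finite $\varepsilon$-net, Brouwer's theorem on the finite-dimensional polytope $\mathcal{C}_\varepsilon$, and a compactness-plus-continuity passage to the limit --- and each step checks out, including the key estimate $\|P_\varepsilon(x)-x\|<\varepsilon$ and the observation that any fixed point must land in $\mathcal{F}(\mathcal{C})\subset\mathcal{K}$. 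The only point worth flagging is that you invoke Brouwer's theorem for a general compact convex subset of a finite-dimensional space (the polytope $\mathcal{C}_\varepsilon$ may have empty interior in the ambient span); this version follows from the usual ball version by a retraction or affine-change-of-coordinates argument, and it would be worth a sentence acknowledging that reduction. With that caveat, the proof is complete and correctly identifies Brouwer's theorem and Mazur's compactness of the closed convex hull as the two genuine external inputs.
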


\begin{thm}[\textbf{Aubin–Lions–Simon}] \label{Aubin-Lions-Simon} Let $\mathcal{B}_0 \subset \mathcal{B}_1 \subset \mathcal{B}_2 $ be three Banach
spaces. We assume that the embedding of $\mathcal{B}_1$  in $ \mathcal{B}_2$ is continuous and that the embedding of $ \mathcal{B}_0$ in $ \mathcal{B}_1$ is compact.  For $ 1 \leq p , r \leq +  \infty$  and $T>0$, we define
$$\mathtt{E}_{p,r} =  \left\{  v \in L^p\left(0,  T ;   \mathcal{B}_0 \right) \, , \, v_t \in  L^r\left( 0 , T ;  \mathcal{B}_2  \right) \right\}. $$
Then, we have
\begin{enumerate}
\item If $p < +  \infty$,  the embedding of $\mathtt{E}_{p,r}$  in  $ L^p\left(0,  T ;   \mathcal{B}_1 \right)$ is compact.
\item   If $p  = +  \infty$ and $r >1$,  the embedding of $\mathtt{E}_{p,r}$  in  $ \mathcal{C} \left(0,  T ;   \mathcal{B}_1 \right)$ is compact. 
\end{enumerate}
\end{thm}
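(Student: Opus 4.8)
The plan is to prove both parts through a single mechanism: reduce convergence in the target space (either $L^p(0,T;\mathcal{B}_1)$ or $\mathcal{C}(0,T;\mathcal{B}_1)$) to convergence in the weaker space $\mathcal{B}_2$, where the bound on the time derivative can be exploited, and then upgrade back to $\mathcal{B}_1$ using the compact embedding $\mathcal{B}_0 \hookrightarrow \mathcal{B}_1$. The two workhorses are the following. First, an interpolation inequality of Ehrling type: for every $\varepsilon > 0$ there is a constant $C_\varepsilon > 0$ such that
\begin{equation*}
\|w\|_{\mathcal{B}_1} \le \varepsilon \|w\|_{\mathcal{B}_0} + C_\varepsilon \|w\|_{\mathcal{B}_2}, \qquad \forall\, w \in \mathcal{B}_0.
\end{equation*}
I would prove this by contradiction: negating it produces a sequence $(w_n)$ with $\|w_n\|_{\mathcal{B}_1} = 1$, with $(\|w_n\|_{\mathcal{B}_0})$ bounded, and with $\|w_n\|_{\mathcal{B}_2} \to 0$; the compact embedding $\mathcal{B}_0 \hookrightarrow \mathcal{B}_1$ extracts a limit $w$ satisfying $\|w\|_{\mathcal{B}_1} = 1$, while the continuous injection $\mathcal{B}_1 \hookrightarrow \mathcal{B}_2$ forces $w = 0$, a contradiction. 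Second, a control of time translates: since $v_t \in L^r(0,T;\mathcal{B}_2)$, Hölder's inequality yields
\begin{equation*}
\|v(t) - v(s)\|_{\mathcal{B}_2} \le \int_s^t \|v_t(\tau)\|_{\mathcal{B}_2}\, d\tau \le |t-s|^{1/r'}\, \|v_t\|_{L^r(0,T;\mathcal{B}_2)},
\end{equation*}
so that a bounded family in $\mathtt{E}_{p,r}$ is uniformly equicontinuous into $\mathcal{B}_2$ whenever $r>1$.

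For part (2), where $p=\infty$ and $r>1$, I would take a bounded sequence $(v_n)$ in $\mathtt{E}_{\infty,r}$, so that $\sup_t \|v_n(t)\|_{\mathcal{B}_0} \le M$ and $\|v_n'\|_{L^r(0,T;\mathcal{B}_2)} \le M$. The time-translate estimate renders $(v_n)$ uniformly equicontinuous as maps into $\mathcal{B}_2$, and for each fixed $t$ the set $\{v_n(t)\}$ lies in a bounded subset of $\mathcal{B}_0$, hence is relatively compact in $\mathcal{B}_2$ via $\mathcal{B}_0 \hookrightarrow \mathcal{B}_1 \hookrightarrow \mathcal{B}_2$. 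A vector-valued Arzelà--Ascoli argument then produces a subsequence converging in $\mathcal{C}(0,T;\mathcal{B}_2)$. Applying the Ehrling inequality pointwise in time to the differences gives
\begin{equation*}
\|v_n - v_m\|_{\mathcal{C}(0,T;\mathcal{B}_1)} \le 2\varepsilon M + C_\varepsilon \|v_n - v_m\|_{\mathcal{C}(0,T;\mathcal{B}_2)},
\end{equation*}
and sending first $n,m \to \infty$ and then $\varepsilon \to 0$ shows the subsequence is Cauchy, hence convergent, in $\mathcal{C}(0,T;\mathcal{B}_1)$.

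For part (1), where $p<\infty$, the structure is identical, except that the Arzelà--Ascoli step is replaced by the Fréchet--Kolmogorov--Riesz compactness criterion in $L^p(0,T;\mathcal{B}_2)$: the time-translate estimate gives $\|v_n(\cdot + s) - v_n(\cdot)\|_{L^p(\mathcal{B}_2)} \to 0$ as $s \to 0$ uniformly in $n$, while the mean values $\int_{t_1}^{t_2} v_n$ stay in a fixed compact subset of $\mathcal{B}_2$, yielding a subsequence convergent in $L^p(0,T;\mathcal{B}_2)$; integrating the Ehrling inequality in time and using the uniform $L^p(\mathcal{B}_0)$ bound on the first term, the same two-limit argument delivers convergence in $L^p(0,T;\mathcal{B}_1)$. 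The main obstacle I anticipate is verifying the Fréchet--Kolmogorov criterion near the endpoints $t=0,T$, where the translation estimate forces one either to extend $v_n$ or to restrict to interior subintervals and argue that the boundary-layer contributions are uniformly small; this is precisely where the integrability of $v_t$ is used. The endpoint behavior is also what separates the two cases: in part (2) the condition $r>1$ is genuinely needed to produce a positive Hölder exponent $1/r'$ and thus honest equicontinuity, whereas in part (1) mere $L^p$-smallness of the translates suffices.
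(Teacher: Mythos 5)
The paper does not prove this statement: it is quoted as a classical background result with a citation to Simon's compactness paper, so there is no in-paper argument to compare against. Your proof --- Ehrling's interpolation inequality obtained by contradiction from the compact embedding $\mathcal{B}_0\hookrightarrow\mathcal{B}_1$, combined with control of time translates in $\mathcal{B}_2$ and then Arzel\`a--Ascoli (for $p=\infty$, $r>1$) or the Fr\'echet--Kolmogorov criterion with the compactness of the averages $\int_{t_1}^{t_2}v_n$ (for $p<\infty$) --- is precisely the standard argument of the cited reference and is correct, including your observation that $r>1$ is what yields genuine equicontinuity in case (2) while only $L^p$-smallness of translates is needed in case (1).
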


Lastly, we report the following  Gronwall lemmas which will play a crucial role in our analysis (see, e.g., \cite{FMTT1983}). 
\begin{lem}[\textbf{Gronwall's lemma in differential form}]\label{Gronwall} Let $T \in \mathbb{R}^+$,  $f \in W^{1,1} (0, T)$ and $g,  \, \lambda  \in L^1 (0, T)$. Then
$$ f'(t) \leq \lambda (t)\, f(t) + g(t)
\hspace{1cm} \text{a.e.  in } [0, T]$$
implies for almost all $t \in [0 , T]$
$$ 
f(t) \leq f(0)\, \mathrm{e}^{\int_0^t\, \lambda(\tau)\, d\tau } + \int_0^t \, g(s) \, \mathrm{e}^{ \int_s^t\lambda(\tau)\, d \tau}\, ds.
$$
\end{lem}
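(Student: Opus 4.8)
The plan is to reduce the differential inequality to a plain integration by introducing the integrating factor associated with $\lambda$. First I would set $\Lambda(t) := \int_0^t \lambda(\tau)\,d\tau$, which is well defined and absolutely continuous on $[0,T]$ since $\lambda \in L^1(0,T)$, with $\Lambda'=\lambda$ a.e.\ and $\Lambda(0)=0$. Because $\Lambda$ is continuous, hence bounded, on the compact interval $[0,T]$, the composition $e^{-\Lambda}$ is again absolutely continuous, with $(e^{-\Lambda})' = -\lambda\,e^{-\Lambda}$ a.e.

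Next I would consider the auxiliary function $\psi(t) := f(t)\,e^{-\Lambda(t)}$. Since $f\in W^{1,1}(0,T)$ and $e^{-\Lambda}$ are both absolutely continuous and bounded on $[0,T]$, their product $\psi$ is absolutely continuous, so the product rule holds almost everywhere:
$$
\psi'(t) = e^{-\Lambda(t)}\big(f'(t) - \lambda(t) f(t)\big) \qquad \text{a.e. in } [0,T].
$$
Invoking the hypothesis $f'(t) - \lambda(t) f(t) \leq g(t)$ a.e.\ and using $e^{-\Lambda(t)}>0$ yields the pointwise estimate $\psi'(t) \leq e^{-\Lambda(t)}\, g(t)$ a.e.

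The third step is to integrate this scalar inequality. As $\psi$ is absolutely continuous and $\psi'\in L^1(0,T)$ (the right-hand side lies in $L^1$ because $e^{-\Lambda}$ is bounded and $g\in L^1$), the fundamental theorem of calculus gives, for almost every $t$,
$$
\psi(t) - \psi(0) = \int_0^t \psi'(s)\,ds \leq \int_0^t e^{-\Lambda(s)}\, g(s)\,ds.
$$
Since $\Lambda(0)=0$ forces $\psi(0) = f(0)$, multiplying through by $e^{\Lambda(t)}>0$ and using $\Lambda(t)-\Lambda(s) = \int_s^t \lambda(\tau)\,d\tau$ recovers exactly the asserted bound.

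I do not expect a genuine obstacle, as the statement is classical; the only care required is the regularity bookkeeping. Specifically, one must justify that the Leibniz rule and the fundamental theorem of calculus are legitimate at the $W^{1,1}$/$L^1$ level rather than presuming continuity or smoothness of $\lambda$ and $g$. This is handled by recalling that $W^{1,1}(0,T)$ coincides with the absolutely continuous functions on $[0,T]$, that products of absolutely continuous functions are absolutely continuous with the a.e.\ product rule, and that absolute continuity is precisely the hypothesis under which the fundamental theorem of calculus holds in the Lebesgue sense.
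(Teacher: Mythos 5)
Your argument is correct: the integrating-factor reduction via $\psi=f\,e^{-\Lambda}$ is the standard proof, and you handle the only delicate point properly by identifying $W^{1,1}(0,T)$ with the absolutely continuous functions so that the product rule, the chain rule for $e^{-\Lambda}$, and the fundamental theorem of calculus are all legitimate at the $L^1$ level. Note that the paper itself does not prove this lemma --- it is quoted as a classical result with a citation to the literature --- so there is no in-paper argument to compare against; your proof is exactly the expected one.
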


\begin{lem} [\textbf{Uniform Gronwall lemma - 1}]
\label{unif-gronw}
Let $T \in \mathbb{R}^+$,  $f \in W^{1,1} (t_0, \infty)$ and $g,  \, \lambda  \in L^1_{\text{loc}} (t_0, \infty)$ which satisfy
$$ f'(t) \leq \lambda (t)\, f(t) + g(t) \hspace{1cm} \text{a.e.  in } (t_0, \infty),
$$
and
$$
\int_t^{t+r} \lambda(\tau) \, d \tau\leq a_1, 
\quad 
\int_t^{t+r} g(\tau) \, d \tau\leq a_2, 
\quad 
\int_t^{t+r} f(\tau) \, d \tau\leq a_3, 
\quad \forall \, t\geq t_0,
$$
for $r$, $a_1$, $a_2$ and $a_3$ positive.
Then, for $r>0$, we have
$$ 
f(t) \leq \left( \frac{a_3}{r}+ a_2\right) \mathrm{e}^{a_1}, \quad \forall \, t\geq t_0+r.
$$
\end{lem}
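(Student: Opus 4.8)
The plan is to reduce the uniform estimate to a single application of the differential Gronwall lemma (Lemma \ref{Gronwall}) on a sliding window of length $r$, followed by an averaging argument. Throughout I will use that $f,g,\lambda$ are nonnegative, as is implicit in the hypotheses (they arise as energy-type quantities, and the one-sided window bounds $\int_t^{t+r}(\cdot)\le a_i$ are only meaningful for such functions).

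First I would fix an arbitrary $t\ge t_0+r$ and apply Lemma \ref{Gronwall} on the interval $[s,t]$ for a free parameter $s\in[t-r,t]$, obtaining
\[
f(t) \le f(s)\,\exp\!\left(\int_s^t \lambda(\tau)\,d\tau\right) + \int_s^t g(\tau)\,\exp\!\left(\int_\tau^t \lambda(\sigma)\,d\sigma\right)d\tau.
\]
Because $t-r\ge t_0$, the window hypothesis is available on $[t-r,t]$; by nonnegativity of $\lambda$ both exponents are dominated by $\int_{t-r}^t\lambda\le a_1$, and similarly $\int_s^t g\le\int_{t-r}^t g\le a_2$. This produces the pointwise-in-$s$ bound
\[
f(t) \le e^{a_1} f(s) + a_2\,e^{a_1}, \qquad s\in[t-r,t].
\]

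The key step is then to eliminate the unknown value $f(s)$ by averaging over the window. Integrating the last inequality in $s$ over $[t-r,t]$ — the left-hand side being constant in $s$ — and invoking the window bound $\int_{t-r}^t f(s)\,ds\le a_3$ gives
\[
r\,f(t) \le e^{a_1}\int_{t-r}^t f(s)\,ds + r\,a_2\,e^{a_1} \le a_3\,e^{a_1} + r\,a_2\,e^{a_1}.
\]
Dividing by $r$ yields $f(t)\le (a_3/r + a_2)\,e^{a_1}$, which is exactly the claim, and since $t\ge t_0+r$ was arbitrary it holds for all such $t$.

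I expect the only delicate point to be the placement of the window as $[t-r,t]$ rather than $[t,t+r]$: this is precisely where the restriction $t\ge t_0+r$ is needed, so that $t-r\ge t_0$ and all three integral hypotheses apply on $[t-r,t]$, and it is also where nonnegativity of $\lambda$ and $g$ is essential, in order to dominate the partial integrals $\int_s^t(\cdot)$ by the full-window integrals $\int_{t-r}^t(\cdot)$. Everything else is a routine combination of Lemma \ref{Gronwall} with the averaging trick.
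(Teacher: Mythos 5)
Your proof is correct and is precisely the classical argument for this uniform Gronwall lemma; the paper itself gives no proof, deferring to the cited reference \cite{FMTT1983}, where exactly this sliding-window application of the differential Gronwall lemma followed by averaging in $s$ over $[t-r,t]$ appears. Your remark that nonnegativity of $f$, $g$ and $\lambda$ must be read into the hypotheses is also on point: without it the partial integrals $\int_s^t(\cdot)$ cannot be dominated by the full-window integrals, and the standard formulation of the lemma indeed assumes the three functions are positive.
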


\begin{lem}[\textbf{Uniform Gronwall lemma - 2}]\label{Gronwall2} Let $T >0$ be fixed.  Suppose 
$$
\ddt Y + \alpha(t)\, Y(t) \leq 0
$$
where 
$$
\limsup_{t \rightarrow \infty}\, \int_t ^{t+T} \, \alpha(s)\, ds \geq \beta >0.
$$
Then $Y(t) \rightarrow 0$ exponentially as $t \rightarrow \infty$. 
\end{lem}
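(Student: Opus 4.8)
The plan is to convert the differential inequality into a pointwise bound through an integrating factor, and then to turn the windowed hypothesis on $\alpha$ into a linear lower bound for its running integral. Fix $t_0$ in the domain of $Y$ and set $\rho(t)=\mathrm{e}^{\int_{t_0}^t \alpha(s)\,ds}$, which is positive and locally absolutely continuous since $\alpha\in L^1_{\mathrm{loc}}$. Multiplying the hypothesis by $\rho(t)>0$ gives
\begin{equation*}
\ddt\big(\rho(t)\,Y(t)\big)=\rho(t)\Big(\ddt Y+\alpha(t)\,Y(t)\Big)\le 0,
\end{equation*}
so $\rho Y$ is nonincreasing and integration from $t_0$ to $t$ yields the comparison estimate
\begin{equation}\label{eq:comp}
Y(t)\le Y(t_0)\,\mathrm{e}^{-\int_{t_0}^t \alpha(s)\,ds},\qquad t\ge t_0.
\end{equation}
It therefore suffices to show that $\int_{t_0}^t\alpha(s)\,ds$ grows at least linearly in $t$.

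For the second step, the natural route is to promote the hypothesis to a windowed lower bound valid for all large $t$: I would seek $\beta'\in(0,\beta)$ and a threshold $T_0\ge t_0$ with $\int_t^{t+T}\alpha(s)\,ds\ge\beta'$ for every $t\ge T_0$. Granting this, partitioning $[T_0,T_0+nT]$ into $n$ consecutive windows of length $T$ and summing gives $\int_{T_0}^{T_0+nT}\alpha\ge n\beta'$; for a general $t=T_0+nT+s$ with $0\le s<T$, absorbing the fractional leftover block into a fixed constant $C$ yields $\int_{T_0}^{t}\alpha(s)\,ds\ge \frac{\beta'}{T}(t-T_0)-C$. Substituting this into \eqref{eq:comp} produces $Y(t)\le C'\,\mathrm{e}^{-(\beta'/T)(t-T_0)}$, which is precisely the claimed exponential decay.

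The delicate point, and the step I expect to be the main obstacle, is justifying the windowed lower bound for \emph{every} $t\ge T_0$ rather than merely along a subsequence: the bare statement $\limsup_{t\to\infty}\int_t^{t+T}\alpha\ge\beta$ only furnishes a sequence of good windows, so to reach exponential (as opposed to subsequential) decay one must either read the hypothesis in the stronger sense that the running window integral stays above $\beta$ for large $t$, or supplement it with control on the negative part of $\alpha$ so that the bad windows between the good ones cannot erode the accumulated decay. Controlling the fractional leftover block uniformly in $t$ — which requires a uniform bound of the form $\inf_{0\le s\le T}\int_{\tau}^{\tau+s}\alpha\ge -C$ — is part of the same issue, and is what legitimizes the constant $C$ above. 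The integrating-factor computation of the first step, by contrast, is entirely routine.
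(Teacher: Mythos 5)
Your argument is the standard proof of this lemma, and it is the one the paper implicitly relies on: the paper offers no proof of Lemma \ref{Gronwall2} at all, deferring to the cited reference, where the integrating-factor bound $Y(t)\le Y(t_0)\exp\bigl(-\int_{t_0}^{t}\alpha(s)\,ds\bigr)$ followed by summation over consecutive windows of length $T$ is exactly the argument. So on the level of strategy there is nothing to correct.

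The obstacle you single out is, however, a real one, and it concerns the statement rather than your proof. With only $\limsup_{t\to\infty}\int_t^{t+T}\alpha(s)\,ds\ge\beta$ and no control on the negative part of $\alpha$, the conclusion fails: take $T=1$, $\alpha\equiv\beta$ on each $[2^n,2^n+1]$ and $\alpha\equiv -n$ on the remainder of $[2^n,2^{n+1}]$; the $\limsup$ hypothesis holds, yet $\int_{t_0}^{t}\alpha(s)\,ds\to-\infty$ along a subsequence, and the exact solution $Y(t)=Y(t_0)\exp\bigl(-\int_{t_0}^{t}\alpha(s)\,ds\bigr)$ of the differential inequality is unbounded. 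The lemma must therefore be read in the classical Foias--Manley--Temam--Tr\`eve form: $\int_t^{t+T}\alpha(s)\,ds\ge\beta>0$ for \emph{all} sufficiently large $t$, together with $\sup_t\int_t^{t+T}\alpha^-(s)\,ds<\infty$ to absorb the fractional leftover window (this is what legitimizes your constant $C$). This is harmless for the paper, because in both places the lemma is invoked (Theorems \ref{thm2} and \ref{thm4}) the authors verify precisely this stronger statement: they prove $\int_t^{t+T}\alpha(s)\,ds\ge c>0$ for every sufficiently large $t$, and the negative part of $\alpha$ there is a fixed multiple of $\|\nabla\bfu\|_{L^p}^{2p/(2p-3)}$ (resp.\ $\|\nabla\bfu\|_{L^{33/5}}^{22/17}$), whose window integrals are uniformly bounded by \eqref{W1pBound} and \eqref{335}. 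With that reading, and using $Y\ge 0$ as in the applications, your two steps close the proof completely.
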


\section{The Ladyzhenskaya Model}
 The phenomenon that we consider in this section  is the motion of an  incompressible  viscous fluid in a bounded Lipschitz domain $\Omega  \subset \mathbb{R}^d,  \, d \in  \{2,3\}$ with no-slip  boundary conditions. Let $\bfu$  denote  the velocity field, $P$ the pressure, and $\bff$ the body force per unit mass.  In \cite{L67},  Ladyzhenskaya proposed the following mathematical model
\begin{equation}
 \label{Lady}
 \begin{split}
\partial_t \bfu+   (\bfu \cdot \nabla) \bfu   - \nabla \cdot  \mathbf{T}(\D \bfu) \ +  \nabla P  & = \bff,\\ 
\nabla \cdot \bfu  &= 0,\\
\bfu|_{\partial \Omega}& =0,
\end{split}
\end{equation}
where $\mathbf{T}$ denotes the  Cauchy stress of an incompressible and homogeneous fluid whose constitutive relation is given by 
\begin{equation}\label{Tensor}
\mathbf{T} (\D \bfu) =  2 \left(\nu_0  + \nu_1 |\D \bfu|_F^{p-2} \right)\, \D \bfu, \quad p\geq 2,
\end{equation}
with initial condition $\bfu(\cdot , 0) = \bfu_0(\cdot)$. Here, $\D \bfu= \frac12\left[ \left( \nabla \bfu \right) + \left( \nabla \bfu \right)^T \right] $, and $\nu_0$ and $\nu_1$ are positive parameters. It is worth mentioning that $\nu_0$ scales as $\frac{(\text{length})^2}{(\text{time})} $, and $\nu_1$ has dimension $(\text{time})^{p-3} \times (\text{length})^2$. In the literature, some works have been devoted to the case with $ \mathbf{T}=\mathbf{T}(\nabla \bfu)$, namely  $\D \bfu$ is replaced by the full velocity gradient $\nabla \bfu$ in \eqref{Tensor}. However, in such a case the model does not comply with the principle of frame indifference.

Before stating the well-posedness result,  we report the following property of the constitutive relation \eqref{Tensor}, which will be of key usefulness in the sequel.

\begin{prop}\label{propT-Mon}
Let $\mathbf{T}$ be as given in \eqref{Tensor}. For all  $\, A, B \in \mathbb{R}_{sym}^{3 \times 3}$,  we have
\begin{equation}
\label{T-mon}
\left( \mathbf{T}(A)-\mathbf{T}(B) \right): (A-B) \geq 2\, \nu_0 \, |A-B|_F^2.
\end{equation}
\end{prop}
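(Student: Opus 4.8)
The plan is to split $\mathbf{T}$ into its Newtonian and shear-dependent parts and treat them separately. Writing $\mathbf{T}(A) = 2\nu_0 A + 2\nu_1 |A|_F^{p-2}A$ from \eqref{Tensor}, the linear term already supplies the entire right-hand side of \eqref{T-mon}, since
\[
\left(2\nu_0 A - 2\nu_0 B\right):(A-B) = 2\nu_0\,(A-B):(A-B) = 2\nu_0 |A-B|_F^2 .
\]
Hence it suffices to show that the nonlinear part is monotone, i.e. that
\[
\left(|A|_F^{p-2}A - |B|_F^{p-2}B\right):(A-B) \geq 0, \qquad \forall\, A,B \in \mathbb{R}_{sym}^{3\times 3}.
\]

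To prove this I would regard $\mathbb{R}_{sym}^{3\times 3}$ as a Euclidean space under the Frobenius inner product $A:B$ and norm $|\cdot|_F$, so the inequality becomes the standard monotonicity estimate for the map $x \mapsto |x|^{p-2}x$. Expanding the inner product and using $A:B = B:A$ gives
\[
\left(|A|_F^{p-2}A - |B|_F^{p-2}B\right):(A-B) = |A|_F^p + |B|_F^p - \left(|A|_F^{p-2} + |B|_F^{p-2}\right)(A:B).
\]
I would then bound the cross term from above by Cauchy--Schwarz, $A:B \leq |A|_F\,|B|_F$, which is admissible because the coefficient $|A|_F^{p-2}+|B|_F^{p-2}$ is nonnegative, to obtain the lower bound
\[
\left(|A|_F^{p-2}A - |B|_F^{p-2}B\right):(A-B) \geq |A|_F^p + |B|_F^p - \left(|A|_F^{p-2} + |B|_F^{p-2}\right)|A|_F\,|B|_F .
\]

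The last step is to factor the scalar right-hand side. Setting $a = |A|_F$ and $b = |B|_F$, it equals
\[
a^p + b^p - a^{p-1}b - b^{p-1}a = \left(a^{p-1} - b^{p-1}\right)(a-b),
\]
which is nonnegative since $t \mapsto t^{p-1}$ is nondecreasing on $[0,\infty)$ for $p \geq 2$, so the two factors always share the same sign. Adding the Newtonian contribution then yields \eqref{T-mon}. (Alternatively, one could note that $A \mapsto |A|_F^{p-2}A$ is the gradient of the convex potential $\tfrac{1}{p}|A|_F^p$ and invoke monotonicity of gradients of convex functions, but the direct computation above is cleaner.)

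The argument is entirely elementary, so there is no serious obstacle; the only points requiring attention are applying Cauchy--Schwarz with the correct sign (using that the coefficient is nonnegative rather than that $A:B$ has a definite sign) and carrying out the factorization, with the hypothesis $p \geq 2$ entering precisely through the monotonicity of $t^{p-1}$.
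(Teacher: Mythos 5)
Your proof is correct, and it takes a genuinely different route from the one in Appendix A of the paper. The paper represents the difference $\mathbf{T}(A)-\mathbf{T}(B)$ via the fundamental theorem of calculus along the segment $\tau A+(1-\tau)B$, differentiates the constitutive law, and identifies the resulting integrand contracted with $A-B$ as the sum of a term bounded below by $2\nu_0\,|A-B|_F^2$ and a perfect square $\bigl[\sum_{i,j}(\tau a_{ij}+(1-\tau)b_{ij})(a_{ij}-b_{ij})\bigr]^2\geq 0$. You instead split off the Newtonian part $2\nu_0 A$, which already produces the entire lower bound exactly, and reduce the claim to monotonicity of $A\mapsto |A|_F^{p-2}A$, which you verify by expanding the Frobenius inner product, applying Cauchy--Schwarz to the cross term (correctly noting that the nonnegativity of the coefficient $|A|_F^{p-2}+|B|_F^{p-2}$ is what makes the substitution $A:B\leq |A|_F|B|_F$ admissible regardless of the sign of $A:B$), and factoring the scalar expression as $(a^{p-1}-b^{p-1})(a-b)\geq 0$ for $p\geq 2$. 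Your argument is shorter and avoids differentiating $|\cdot|_F^{p-2}$, which is a mild convenience near $A=B=0$ when $2\leq p<4$; the paper's integral representation is the standard device that, with more care, yields sharper strong-monotonicity bounds retaining a $\nu_1$-weighted term such as $(|A|_F+|B|_F)^{p-2}|A-B|_F^2$, but since the proposition only keeps the $\nu_0$ contribution, both methods deliver exactly the stated inequality.
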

The proof of the monotonicity property  \eqref{T-mon} can be found in  the above mentioned references. For the readers' convenience and in order to make the paper self-contained, we have included in the Appendix \ref{AppendixA} a short proof of the above property. We would like to stress the dependence of the factor $\nu_0$ in the lower bound, which will be exploited in the subsequent analysis.

 Taking advantage of the enhanced regularity due to \eqref{Tensor}, Ladyzhenskaya showed in \cites{L67, L68} that the weak solutions to \eqref{Lady} are global in time and unique for any Reynolds number and any exponent $p \geq \frac{5}{2}$.  For an overview, we refer the reader to  \cite{L69} and   \cite{L98}*{Theorem 7.2}, and to \cites{L94, L98,J01} for the existence of compact finite dimensional global attractor. 
 Later on, many contributions have been devoted to the analysis of the case 
 $1\leq p < \frac{5}{2}$. 
 Without any claim to give an exhaustive survey, we mention the existence of global measure-valued solutions for $\frac65 < p \leq \frac95$, 
 global weak solutions for $p > \frac95$ and global strong solutions for $p\geq \frac94 $ obtained in \cites{MNRR, MR1799487, MR1301173}.
 For the periodic case, enhanced results in terms of $p$ have been achieved as reported in \cites{MR1348587, MR2005}. In particular, the existence, but not uniqueness,  of global weak solutions fulfilling the energy equality holds for $p \geq \frac{11}{5}$. Moreover, under additional assumptions on the initial datum and the forcing term, global in time and unique strong solutions also exist.
 The asymptotic behavior in the same range of $p$ has been studied in \cites{MR1389407,malek2002large}.

We now state the well-posedness result for the model \eqref{Lady} proved in \cite{L67} (see also \cite{L92}). 


\begin{thm}[\textbf{Existence and uniqueness of  weak solutions}] 
\label{Lad-th}
Assume that $p \geq \frac{5}{2}$, $\bff \in L^{2} (0 , T ; L^2(\Omega))$ and  $\bfu_0 \in L_\sigma^2(\Omega)$. Problem \eqref{Lady} has a unique weak solution on $(0,\infty)$ satisfying for all $T>0$
$$ \bfu \in \mathcal{C}([0,T]; L_\sigma^2(\Omega)) \cap L^p (0 , T;  W_\sigma^{1,p}(\Omega)), \quad \partial_t \bfu \in L^{p'}(0,T;(W_\sigma^{1,p}(\Omega))'),$$
where $p'$ is the conjugate exponent of $p$, and 
$$
\lb \partial_t \bfu, \bfw \rb + ( (\bfu \cdot \nabla) \bfu, \bfw)   + ( \mathbf{T}(\D \bfu), \nabla \bfw)= (\bff, \bfw), \quad \forall \, \bfw \in W_\sigma^{1,p}(\Omega),
$$
for almost all $t\in [0,T]$. 
 Moreover,  the energy equality holds
\begin{equation}
\label{EnEq}
\frac12 \| \bfu(t)\|^2+ \int_{0}^t \left( 2\nu_0 \|\D \bfu (\tau)\|^2  + 2\nu_1 \|\D \bfu (\tau)\|_{L^p}^p \right) \, d\tau 
= \frac12 \| \bfu_0\|^2 + \int_0^t (\bff(\tau),\bfu(\tau)) \, d \tau, \quad \forall \, t \geq 0.
\end{equation}
\end{thm}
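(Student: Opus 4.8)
The plan is to construct the solution by a Galerkin scheme, pass to the limit using compactness together with the monotonicity in Proposition~\ref{propT-Mon}, and then establish uniqueness via the lower bound \eqref{T-mon}. First I would fix a basis of $W^{1,p}_\sigma(\Omega)$ (for instance the eigenfunctions of the Stokes operator) and solve the finite-dimensional ODE system for the Galerkin approximations $\bfu_m$; local solvability follows from the continuity of the nonlinearities, and global existence from the a~priori bounds. Testing the $m$-th equation with $\bfu_m$ and using $((\bfu_m\cdot\grad)\bfu_m,\bfu_m)=0$ gives the discrete energy identity, in which the stress contributes $(\mathbf{T}(\D\bfu_m),\D\bfu_m)=2\nu_0\|\D\bfu_m\|^2+2\nu_1\|\D\bfu_m\|_{L^p}^p$ by \eqref{Tensor}. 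Integrating in time and using \eqref{Korn} then yields uniform bounds for $\bfu_m$ in $L^\infty(0,T;L^2_\sigma(\Omega))\cap L^p(0,T;W^{1,p}_\sigma(\Omega))$.

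Next I would bound the discrete time derivative in the dual norm. The growth $|\mathbf{T}(\D\bfu_m)|_F\le C\left(|\D\bfu_m|_F+|\D\bfu_m|_F^{p-1}\right)$ shows $\mathbf{T}(\D\bfu_m)$ is uniformly bounded in $L^{p'}(0,T;L^{p'}(\Omega))$ because $(p-1)p'=p$, while $(\bfu_m\cdot\grad)\bfu_m$ is controlled in the same dual space by \eqref{Holder} and \eqref{Embedding}, using $p\ge 5/2$. This gives a uniform bound on $\partial_t\bfu_m$ in $L^{p'}(0,T;(W^{1,p}_\sigma(\Omega))')$, so the Aubin--Lions--Simon Theorem~\ref{Aubin-Lions-Simon} (with $\mathcal{B}_0=W^{1,p}_\sigma$, $\mathcal{B}_1=L^2_\sigma$, $\mathcal{B}_2=(W^{1,p}_\sigma)'$) yields a subsequence converging weakly-$\ast$ in the energy spaces and strongly in $L^p(0,T;L^2_\sigma(\Omega))$. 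The strong $L^2$ convergence handles the convective term. For the stress one has only $\mathbf{T}(\D\bfu_m)\rightharpoonup\chi$ weakly in $L^{p'}$, and the crucial step is to identify $\chi=\mathbf{T}(\D\bfu)$; I would do this by the Minty--Browder trick: the inequality $(\mathbf{T}(\D\bfu_m)-\mathbf{T}(\D\bfw),\D\bfu_m-\D\bfw)\ge0$, a consequence of Proposition~\ref{propT-Mon}, passes to the limit thanks to the limiting energy identity, and hemicontinuity of $\mathbf{T}$ in $\bfw$ then forces $\chi=\mathbf{T}(\D\bfu)$. Once the limit equation holds, the regularity $\bfu\in L^p(0,T;W^{1,p}_\sigma)$ with $\partial_t\bfu\in L^{p'}(0,T;(W^{1,p}_\sigma)')$ gives $\bfu\in\mathcal{C}([0,T];L^2_\sigma)$ and the identity $\langle\partial_t\bfu,\bfu\rangle=\tfrac12\ddt\|\bfu\|^2$; since the trilinear term vanishes, integrating yields the energy equality \eqref{EnEq}.

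The main obstacle is uniqueness, and this is exactly where both $p\ge 5/2$ and the sharp constant $2\nu_0$ in \eqref{T-mon} enter. Let $\bfu_1,\bfu_2$ be two solutions with the same data and set $\bfw=\bfu_1-\bfu_2$. Testing the difference of the two equations with $\bfw$, the stress difference is bounded below by $2\nu_0\|\D\bfw\|^2$ via Proposition~\ref{propT-Mon}, while the convective difference equals $((\bfw\cdot\grad)\bfu_1,\bfw)=-((\bfw\cdot\grad)\bfw,\bfu_1)$ after discarding $((\bfu_2\cdot\grad)\bfw,\bfw)=0$. Using \eqref{Holder} with the Sobolev embedding $W^{1,5/2}(\Omega)\hookrightarrow L^{15}(\Omega)$ and interpolating $\bfw$ between $L^2$ and $L^6$, I expect $|((\bfw\cdot\grad)\bfw,\bfu_1)|\le \varepsilon\|\grad\bfw\|^2+C_\varepsilon\|\bfw\|^2\|\bfu_1\|_{L^{15}}^{5/2}$ by \eqref{Young}. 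Absorbing the gradient term through \eqref{Korn} and applying Gronwall's Lemma~\ref{Gronwall} forces $\bfw\equiv0$, because $\|\bfu_1\|_{L^{15}}^{5/2}\in L^1(0,T)$ precisely when $\bfu_1\in L^{5/2}(0,T;W^{1,5/2}_\sigma)$. The delicacy is that the exponent $5/2$ surfaces simultaneously as the Young exponent needed to absorb $\|\grad\bfw\|^2$ and as the time-integrability of the reference solution, so the argument is critical at $p=5/2$; this is what pins the hypothesis, since the existence part above already closes for the wider range $p\ge 11/5$.
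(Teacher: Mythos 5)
The paper does not actually prove Theorem~\ref{Lad-th}: it is quoted as a classical result of Ladyzhenskaya, with the proof deferred to \cite{L67} (see also \cite{L92} and \cite{L98}). Your Galerkin--compactness--monotonicity scheme, with Minty--Browder to identify the weak limit of $\mathbf{T}(\D \bfu_m)$ and a Gronwall argument for uniqueness, is precisely the classical route taken in those references, and your exponent bookkeeping is consistent with what the paper itself does elsewhere: the uniqueness estimate you sketch (interpolating $\bfw$ between $L^2$ and $L^6$, absorbing $\|\nabla\bfw\|^2$ via \eqref{T-mon} and \ref{Korn}, and needing $\|\nabla\bfu_1\|_{L^p}^{2p/(2p-3)}\in L^1(0,T)$, which holds exactly when $2p/(2p-3)\le p$, i.e.\ $p\ge 5/2$) is the same computation the authors reuse in Step IV of the proof of Theorem~\ref{thmE!} and in \eqref{thm2Eq6}. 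The one place where your sketch is loose is the Minty step: the ``limiting energy identity'' you invoke to control $\limsup_m\int(\mathbf{T}(\D\bfu_m),\D\bfu_m)$ is not available before $\chi$ is identified; the correct order is to first pass to the limit in the equation with the unidentified weak-$*$ limit $\chi$ in place of $\mathbf{T}(\D\bfu)$, derive the energy identity for that equation via the Lions--Magenes lemma (using that $((\bfu\cdot\nabla)\bfu,\bfu)=0$ makes sense in this range of $p$, cf.\ \cite{MNRR}*{Lemma 2.45}), and only then run the monotonicity argument. With that ordering made explicit, the proposal is a correct proof in outline and matches the cited source rather than anything written in this paper.
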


Let $\lambda_1$ be the smallest eigenvalue of the Stokes operator. Assume that $\bff$ is time independent. We denote by $G$ the Grashof number in three-dimensions defined as
\begin{equation}\label{Grashof}
G = \frac{\|\bff\|}{\nu_0^2 \,  \lambda_1^{3/4}}.
\end{equation}
We now give bounds on the solution $\bfu$ of \eqref{Lady} that will be used in our analysis. 

\begin{prop}\label{Prop1} Fix $T> 0$, and let $\bff \in L^2(\Omega)$.  Suppose that $\bfu$ is a weak solution of \eqref{Lady}, then we have
\begin{equation}
\label{u-decay}
\| \bfu (t)\|^2 \leq \| \bfu_0\|^2 \mathrm{e}^{- \lambda_1 \nu_0 t} +  \frac{\| \bff\|^2}{\lambda_1^2 \nu_0^2}  \left( 1- \mathrm{e}^{- \lambda_1 \nu_0 t}\right), \quad \forall \, t \geq 0.
\end{equation}
As a consequence, there exists a time $t_0 > 0$
such that for all $t \geq t_0$ we have
\begin{equation}\label{L2Bound}
\|\bfu(t)\|^2 \leq  2 \frac{\nu_0^2 G^2}{\lambda_1^\frac12}
\end{equation}
and 
\begin{equation}\label{W1pBound}
\int_t^{t+T}\, \left( \nu_0\|\D \bfu(\tau)\|^2+ \nu_1\| \D \bfu(\tau)\|_{L^p}^p\right) \, d\tau \leq 
2\left(1+ \nu_0 \lambda_1 T \right) 
\frac{\nu_0^2 G^2}{\lambda_1^\frac12}.
\end{equation}
\end{prop}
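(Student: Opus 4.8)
The plan is to read off all three bounds from the energy equality \eqref{EnEq}, which I would first recast in differential form as
$$\frac12\ddt\|\bfu\|^2 + 2\nu_0\|\D\bfu\|^2 + 2\nu_1\|\D\bfu\|_{L^p}^p = (\bff,\bfu) \qquad \text{for a.e. } t>0 .$$
This step is legitimate because \eqref{EnEq} already exhibits $t\mapsto\|\bfu(t)\|^2$ as an absolutely continuous function, so no extra regularity of the weak solution is needed. Throughout I use that $\bff$ is time-independent, as standing assumption, so the right-hand source terms below are constant in time.

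To prove \eqref{u-decay} I would discard the nonnegative term $2\nu_1\|\D\bfu\|_{L^p}^p$ and lower-bound the Newtonian dissipation by chaining the \eqref{Korn} inequality for $p=2$ with the Poincar\'e-type inequality attached to the smallest Stokes eigenvalue, namely $\lambda_1\|\bfu\|^2\le\|\nabla\bfu\|^2\le 2\|\D\bfu\|^2$, giving $2\nu_0\|\D\bfu\|^2\ge\nu_0\lambda_1\|\bfu\|^2$. The forcing is controlled by Cauchy--Schwarz and the \eqref{Young} inequality (with $p=2$) as $(\bff,\bfu)\le\frac{\nu_0\lambda_1}{2}\|\bfu\|^2+\frac{1}{2\nu_0\lambda_1}\|\bff\|^2$, so that half of the available coercivity absorbs the quadratic term. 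This leaves the linear differential inequality $\ddt\|\bfu\|^2+\nu_0\lambda_1\|\bfu\|^2\le\frac{1}{\nu_0\lambda_1}\|\bff\|^2$, and Lemma \ref{Gronwall} with constant coefficient $\lambda\equiv-\nu_0\lambda_1$ and constant source $g\equiv\frac{1}{\nu_0\lambda_1}\|\bff\|^2$ yields \eqref{u-decay} after evaluating the elementary integral. Bound \eqref{L2Bound} is then immediate: since the transient $\|\bfu_0\|^2\mathrm{e}^{-\lambda_1\nu_0 t}$ decays to zero, I would choose $t_0>0$ so large that it is dominated by $\frac{\|\bff\|^2}{\lambda_1^2\nu_0^2}$ for all $t\ge t_0$, giving $\|\bfu(t)\|^2\le 2\frac{\|\bff\|^2}{\lambda_1^2\nu_0^2}$; substituting $\|\bff\|^2=\nu_0^4\lambda_1^{3/2}G^2$ from \eqref{Grashof} rewrites the right-hand side as $2\nu_0^2 G^2\lambda_1^{-1/2}$.

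For the time-averaged bound \eqref{W1pBound} I would instead integrate the energy equality over a window $[t,t+T]$, obtaining $\int_t^{t+T}\!\big(2\nu_0\|\D\bfu\|^2+2\nu_1\|\D\bfu\|_{L^p}^p\big)\,d\tau=\tfrac12\|\bfu(t)\|^2-\tfrac12\|\bfu(t+T)\|^2+\int_t^{t+T}(\bff,\bfu)\,d\tau$, then discard the terminal energy and estimate the forcing integral either by absorbing one copy of $\nu_0\|\D\bfu\|^2$ through \eqref{Young}, or directly via $\|\bff\|\,\|\bfu\|$ using the uniform $L^2$ bound \eqref{L2Bound} on the window (valid for $t\ge t_0$). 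Feeding in \eqref{L2Bound} and again substituting $\|\bff\|^2=\nu_0^4\lambda_1^{3/2}G^2$ collects the two contributions into a multiple of $\nu_0^2 G^2\lambda_1^{-1/2}$ with prefactor of the form $c_1+c_2\,\nu_0\lambda_1 T$; the stated $2(1+\nu_0\lambda_1 T)$ is a convenient non-sharp majorant, and replacing the coefficient $2$ by $1$ on the $\nu_1$ term is harmless since the integrand is nonnegative.

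I do not expect a genuine obstacle here, as these are standard energy arguments for a dissipative system. The only points meriting care are the justification of the differential form of the energy balance (supplied by the absolute continuity inherent in \eqref{EnEq}) and the correct chaining of the Korn inequality with the Stokes eigenvalue to move between $\|\D\bfu\|$, $\|\nabla\bfu\|$ and $\|\bfu\|$. The nonlinear $p$-dissipation term is never used beyond its nonnegativity, so the entire structure reduces to the classical Navier--Stokes energy estimate with viscosity $\nu_0$.
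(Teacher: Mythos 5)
Your proof is correct and follows exactly the route the paper indicates (the paper omits the proof as standard, noting only that \eqref{u-decay} follows from Lemma \ref{Gronwall} after dropping the $\nu_1$ term in \eqref{EnEq}, and that \eqref{W1pBound} follows from \eqref{EnEq} and \eqref{L2Bound}). Your chaining of Korn with the Stokes eigenvalue, the Young absorption of the forcing, and the substitution $\|\bff\|^2=\nu_0^4\lambda_1^{3/2}G^2$ all check out against the stated constants.
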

The proof of Proposition \ref{Prop1} is standard and thus omitted here. For the readers' convenience, we observe that \eqref{u-decay} follows from Lemma \ref{Gronwall} after dropping the $\nu_1$ term in \eqref{EnEq}. In addition, \eqref{W1pBound} is a consequence of \eqref{EnEq} and \eqref{L2Bound}.  

\section{The case \texorpdfstring{$p\geq\frac{5}{2}$}{p<=5/2} with no-slip boundary conditions}

In this section, we first analyze the nudging algorithm for the Ladyzhenskaya model with no-slip boundary conditions for $p\geq \frac{5}{2}$. After proving the global well-posedness of the week solution in Theorem \ref{thmE!}, 
we proceed to the task of finding conditions on $h$ and $\mu$ under which the approximate solution obtained by this algorithm converges to the reference solution over time, summarized in Theorem \ref{thm2}.

Let $I_h(\bfu(t))$ represent the observational measurements at a spatial resolution of size $h$ for $t >0$ satisfying \eqref{I_h}.  The approximating solution $\bfv$ with initial condition $\bfv(\cdot, 0) = \bfv_0(\cdot)$, chosen arbitrarily, shall be given by
\begin{equation} \label{Lady-DA}
\begin{split}
\partial_t \bfv+   (\bfv \cdot \nabla) \bfv   - \nabla \cdot  \mathbf{T}( \D \bfv) \ +  \nabla Q  = \bff - \mu I_h(\bfv - \bfu),&\\
\nabla \cdot \bfv  = 0,&\\
\bfv|_{\partial \Omega} =0.&
\end{split}
\end{equation}
The first result of this manuscript concerns the global well-posedness of weak solutions for the Data Assimilation algorithm.

\begin{thm}\label{thmE!}
Assume that $p \geq \frac{5}{2}$, $\bff\in L^2(0,T;L^2(\Omega))$ and $\bfv\in L_\sigma^2(\Omega)$.  Let $\bfu$ be the solution to problem \eqref{Lady} from Theorem \ref{Lad-th}.
The continuous data assimilation equations \eqref{Lady-DA} has a unique global weak solution that  satisfies for all $T>0$
$$ \bfv \in \mathcal{C}([0,T]; L^2_\sigma(\Omega)) \cap L^p (0 , T;  W_\sigma^{1,p}(\Omega))\cap W^{1,p'}(0,T; (W^{1,p}_\sigma(\Omega))')
$$
where $p'= \frac{p}{p-1}$, and 
\begin{equation}
\label{weak-DA}
\lb \partial_t \bfv, \bfw \rb + ( (\bfv \cdot \nabla) \bfv, \bfw)   + ( \mathbf{T}(\D \bfv), \nabla \bfw)= (\bff, \bfw)- \mu (I_h(\bfv-\bfu), \bfw), \quad \forall \, \bfw \in W_\sigma^{1,p}(\Omega),
\end{equation}
for almost all $t\in [0,T]$. 

\end{thm}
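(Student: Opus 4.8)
The plan is to construct the solution by a Schauder fixed-point argument (Theorem \ref{Schauder fixed-point}) built on a linearization of the convective term, and then to establish uniqueness separately using the monotonicity of $\mathbf{T}$. First I would freeze the transport velocity: for a given divergence-free field $\bfw \in L^p(0,T;W^{1,p}_\sigma(\Omega))\cap L^\infty(0,T;L^2_\sigma(\Omega))$ I solve the problem obtained from \eqref{Lady-DA} by replacing $(\bfv\cdot\nabla)\bfv$ with $(\bfw\cdot\nabla)\bfv$, keeping the nonlinear stress $\mathbf{T}(\D\bfv)$ intact. Since $\bfw$ is solenoidal, this is a monotone evolution problem in $\bfv$: the principal part $-\nabla\cdot\mathbf{T}(\D\,\cdot\,)$ is strictly monotone by Proposition \ref{propT-Mon}, and the transport term is skew-symmetric when tested against $\bfv$. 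It therefore admits a unique solution via a Galerkin scheme together with the Minty--Browder monotonicity trick, and this defines a single-valued solution map $\mathcal{F}(\bfw)=\bfv$.

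The energy estimate is obtained by testing the linearized equation with $\bfv$: the transport term drops, the stress term contributes $2\nu_0\|\D\bfv\|^2+2\nu_1\|\D\bfv\|_{L^p}^p$, and the nudging term $-\mu(I_h(\bfv-\bfu),\bfv)$ is controlled using $\|I_h\bfv\|\le c_I\|\bfv\|$ from \eqref{I_h} together with the given bound on $\bfu$. Crucially this estimate is \emph{uniform in} $\bfw$, so after Lemma \ref{Gronwall} one obtains a bound, depending only on $T$, $\mu$ and the data, for $\bfv$ in $L^\infty(0,T;L^2_\sigma(\Omega))\cap L^p(0,T;W^{1,p}_\sigma(\Omega))$; this is where it becomes transparent that neither $\mu$ large nor $h$ small is required. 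I would then read off $\partial_t\bfv\in L^{p'}(0,T;(W^{1,p}_\sigma(\Omega))')$ from the equation: the stress obeys $\|\mathbf{T}(\D\bfv)\|_{L^{p'}}\le C(\|\D\bfv\|+\|\D\bfv\|_{L^p}^{p-1})$ with $\|\D\bfv\|_{L^p}^{p-1}\in L^{p'}(0,T)$ precisely because $p'(p-1)=p$, while the frozen transport term is estimated through $\|\bfw\otimes\bfv\|_{L^{p'}}$ using the bounds on $\bfw,\bfv$. These two bounds let me pick a closed convex set $\mathcal{A}$ (a ball in the energy class intersected with the divergence-free constraint) mapped into itself by $\mathcal{F}$, whose image lies in a compact subset of $L^p(0,T;L^2_\sigma(\Omega))$ by Aubin--Lions--Simon (Theorem \ref{Aubin-Lions-Simon}, part (1)).

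It then remains to verify that $\mathcal{F}$ is continuous on $\mathcal{A}$. Taking $\bfw_n\to\bfw$ in $L^p(0,T;L^2_\sigma(\Omega))$, the uniform bounds give weak and strong limits of $\bfv_n=\mathcal{F}(\bfw_n)$ along a subsequence; strong $L^2$ convergence (again from Aubin--Lions) lets me pass to the limit in the transport and nudging terms, and the Minty trick identifies the weak limit of $\mathbf{T}(\D\bfv_n)$ with $\mathbf{T}(\D\bfv)$. Schauder then yields a fixed point $\bfv=\mathcal{F}(\bfv)$, which is exactly a weak solution of \eqref{weak-DA} in the stated class; the nudging forcing enters only as a bounded lower-order affine perturbation and introduces no new difficulty. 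For uniqueness I would subtract the equations for two solutions, test the difference $\bfe=\bfv_1-\bfv_2$ against itself, bound the stress difference below by $2\nu_0\|\D\bfe\|^2$ via \eqref{T-mon}, dispose of the nudging contribution by $\mu c_I\|\bfe\|^2$, and absorb the convective remainder $((\bfe\cdot\nabla)\bfv_1,\bfe)$ into the dissipation plus a Gronwall coefficient.

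I expect the main obstacle to be this last convective estimate, which is exactly where the hypothesis $p\ge\frac{5}{2}$ is indispensable in three dimensions: one must interpolate $\bfe$ between $L^2_\sigma(\Omega)$ and $W^{1,p}_\sigma(\Omega)$ (using the Sobolev embedding and the Ladyzhenskaya-type inequality) so that $|((\bfe\cdot\nabla)\bfv_1,\bfe)|$ is dominated by $\varepsilon(\|\D\bfe\|^2+\|\D\bfe\|_{L^p}^p)$ plus a term $C(t)\|\bfe\|^2$ with $C\in L^1(0,T)$, after which Lemma \ref{Gronwall} forces $\bfe\equiv 0$. A secondary technical point, standard but delicate, is the Minty--Browder passage to the limit for the monotone stress in the continuity step; the strict monotonicity constant $\nu_0$ emphasized after Proposition \ref{propT-Mon} is what makes both that argument and the final Gronwall estimate close.
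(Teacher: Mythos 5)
Your proposal is correct in substance, but it linearizes a different term than the paper does, and the two choices trade difficulties in opposite directions. The paper freezes the \emph{nudging} input rather than the transport velocity: $\mathcal{F}(\bfw)$ solves \eqref{Lady-DA} with $-\mu I_h\bfw$ in place of $-\mu I_h\bfv$, so the frozen problem is literally the Ladyzhenskaya system with modified force $\bff_\mu-\mu I_h\bfw$, and the solution map is well defined by a direct appeal to Theorem \ref{Lad-th} --- no Galerkin scheme and no Minty--Browder passage appear anywhere in the paper's proof. Continuity of $\mathcal{F}$ is then obtained by a quantitative $L^\infty(0,T;L^2)$ estimate on the difference of two outputs (the same computation that gives uniqueness at the end), and this is exactly where $p\ge\tfrac52$ enters, as you predicted, via the requirement $\tfrac{2p}{2p-3}\le p$ so that the Gronwall coefficient $\|\nabla\bfv\|_{L^p}^{2p/(2p-3)}$ is integrable in time. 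The price of the paper's choice is that the frozen forcing $-\mu I_h\bfw$ does not drop out of the energy balance, so the invariant set must be taken as $\mathcal{A}=\{\bfw:\int_0^t\|\bfw(\tau)\|^2\,d\tau\le c_1e^{c_2t}\}$ rather than a fixed ball. Your choice buys an energy bound uniform in the frozen field (the transport term is skew-symmetric when tested against $\bfv$), hence a fixed ball as invariant set and a transparent independence of $\mu$ and $h$; but it costs a from-scratch existence and uniqueness proof for the linearized transport-plus-monotone-stress problem, plus a Minty identification of the weak limit of $\mathbf{T}(\D\bfv_n)$ both there and in the continuity step, where the paper needs neither. You should also note that your continuity argument is a subsequence/compactness argument relying on uniqueness of the limit, whereas the paper's is a direct Lipschitz-type bound $\|\bfv_n-\bfv\|_{L^\infty(0,T;L^2)}\le C\|\bfw_n-\bfw\|_{L^2(0,T;L^2)}$; both are legitimate. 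Your uniqueness step coincides with the paper's.
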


\begin{proof}
The  strategy  is to reformulate \eqref{Lady-DA} as a fixed-point problem. 
For any fixed $T>0$ , define 
\begin{equation}
\label{F-reg}
\mathcal{F}: L^2(0, T; L^2_\sigma (\Omega)) \rightarrow \mathcal{C} ([0, T]; L^2_\sigma (\Omega) ) \cap  L^p( 0, T ; W^{1,p}_\sigma (\Omega))\cap W^{1,p'}(0,T; (W^{1,p}_\sigma(\Omega))')
\end{equation}
by
$$\mathcal{F} (\bfw)= \bfv,$$
where $\bfv$ is a weak solution to the problem 
\begin{equation}
\label{fp-problem}
\begin{split} 
\partial_t \bfv+   (\bfv \cdot \nabla) \bfv   - \nabla \cdot  \mathbf{T}( \D \bfv) \ +  \nabla q  = \bff_\mu  - \mu I_h \bfw,&\\ 
\nabla \cdot \bfv = 0,&\\
\bfv|_{\partial \Omega} =0,\\
\bfv(\cdot,0)=\bfv_0(\cdot),
\end{split}
\end{equation}
for a given $\bfw \in  L^2 ( 0, T; L^2_\sigma (\Omega))$, 
with  $\bff_\mu = \bff + \mu \, I_h \bfu $. It is easy to verify that  $\bff_\mu \in  L^2 ( 0, T ; L^2(\Omega))$ since $I_h$ is a continuous and bounded linear operator.  The above map $\mathcal{F}$ is well-defined since the existence and uniqueness of a weak solution $\bfv$ for any given initial condition $\bfv_0 \in L_\sigma^2(\Omega)$ follows directly from Theorem \ref{Lad-th}. Now, define
\begin{equation}\label{A}
\mathcal{A} = \left\{ \bfw \in  L^2( 0, T; L^2_\sigma (\Omega))  \hspace{0.1cm} \text{:}   \hspace{0.1cm}  \int_0^t \, \|\bfw(\tau)\|^2  \, d \tau \leq c_1 e^{c_2 \, t}\, \,   \, \forall \, t \in [0, T]\right\},
\end{equation}
with 
$$
c_1 = 2 T \left( \|\bfv_0\|_{L^2_\sigma} + \int_0^{T}  \|\bff_\mu(\tau)\|\, d\tau \right)^2, \quad c_2 = 2 \, \mu^2\, c^2_I  \, T.
$$
To  apply the Schauder fixed-point theorem (see Theorem \ref{Schauder fixed-point}) to the above problem, we will verify the theorem's assumptions in the next five steps. 
\medskip

\subsection*{Step \RN{1}} 
We claim that $\mathcal{F} : \mathcal{A} \rightarrow \mathcal{A}$, i.e. $\mathcal{F} (\mathcal{A}) \subset \mathcal{A}.$

From the energy equality \eqref{EnEq}) and \eqref{I_h}, we have 
\begin{equation}
\begin{split}
\|\bfv(\tau)\| & \leq \|\bfv_0\| + \int_0^{\tau}  \|\bff_\mu(s)\|\, ds +  \mu  \int_0^{\tau}  \|I_h\bfw (s)\|\, ds\\
& \leq \|\bfv_0\| + \int_0^{\tau}  \|\bff_\mu(s)\|\, ds +  \mu\, c_I  \int_0^{\tau}  \|\bfw (s)\|\, ds
\end{split}
\end{equation}
for all $\tau \in [0,T]$.
By using Young's inequality and the H\"older's inequality, we obtain
\begin{equation}
\begin{split}
\|\bfv(\tau)\|^2 & \leq  2 \left( \|\bfv_0\| + \int_0^{\tau}  \|\bff_\mu(s)\|\, ds \right)^2 + 2 \, \mu^2\, c^2_I \left( \int_0^{\tau}  \|\bfw (s)\|\, ds\right)^2\\
&  \leq 2 \left( \|\bfv_0\| + \int_0^{\tau}  \|\bff_\mu(s)\|\, ds \right)^2 + 2 \, \mu^2\, c^2_I  \, T\, \left( \int_0^{\tau}  \|\bfw (s)\|^2\, ds\right).
\end{split}
\end{equation}
Since $\bfw \in \mathcal{A}$, we infer that

\begin{equation}
\begin{split}
\int_0^t  \|\bfv(\tau)\|^2 \, d\tau 
&\leq  2 \int_0^t \left( \|\bfv_0\| + \int_0^{T}  \|\bff_\mu(s)\|\, ds \right)^2 d \tau + 2 \, \mu^2\, c^2_I  \, T\, \int_0^t \int_0^{\tau}  \|\bfw (s)\|^2\, ds \, d \tau \\
&\leq c_1 +  \, c_2  \int_0^t \, \left(c_1\, e^{c_2 \tau}\right) \, d\tau = c_1 e^{c_2\, t},
\end{split}
\end{equation}
which, in turn, entails $\bfu \in \mathcal{A}.$

\subsection*{Step \RN{2}} $\mathcal{A} $ is a closed set in   $L^2 \left( 0, T; L^2_\sigma (\Omega) \right)$.

Assume that $\{\bfw_n\}_{n=0}^\infty \subset \mathcal{A}$ is such that $\bfw_n \rightarrow \bfw$ in   $L^2 \left( 0, T; L^2_\sigma (\Omega) \right)$.   It follows that $\mathcal{A}$ is closed from the following argument
\begin{equation*}
\begin{split}
& \int_0^t \|\bfw(\tau)\|^2\, d\tau = \lim_{n \rightarrow \infty}\int_0^t \|\bfw_n(\tau)\|^2\, d\tau   \leq \lim_{n \rightarrow \infty} c_1 e^{c_2 t} = c_1 e^{c_2 t}, \quad \forall \, t \in [0,T].\\
\end{split}
\end{equation*}

\subsection*{Step \RN{3}} $\mathcal{A}$ is convex set in $ L^2 \left( 0, T; L^2_\sigma (\Omega) \right) $.

Let $\bfw_1, \bfw_2 \in \mathcal{A}$,  then $\lambda \bfw_1 + (1 - \lambda) \bfw_2 \in L^2 \left( 0, T; L^2_\sigma (\Omega) \right)$ for any $\lambda \in [0 , 1]$.
We compute
\begin{equation*}
\begin{split}
&\int_0^t \| \lambda \bfw_1(\tau) + (1 - \lambda) \bfw_2(\tau) \|^2\, d \tau\\  
&= \, \lambda^2 \int_0^t \|  \bfw_1(\tau) \|^2\, d \tau 
+ 2 \lambda (1-\lambda) \int_0^t (\bfw_1(\tau),\bfw_2(\tau)) \, d \tau 
+   \, (1 -\lambda)^2 \int_0^t \|  \bfw_2 (\tau) \|^2\, d \tau\\
& \leq \, \lambda^2 \int_0^t \|  \bfw_1(\tau) \|^2\, d \tau 
+ 2 \lambda (1-\lambda) \left( \int_0^t \|\bfw_1(\tau)\|^2 \, d \tau \right)^\frac12  \left( \int_0^t \|\bfw_2(\tau))\|^2 \, d \tau \right)^\frac12\\ 
& \quad \, +   \, (1 -\lambda)^2 \int_0^t \|  \bfw_2 (\tau) \|^2\, d \tau\\
& \leq  \left( \lambda^2 + (1 - \lambda)^2 + 2\lambda (1-\lambda) \right) \, c_1 e^{c_2 t} \\
&\leq c_1 e^{c_2 t},
\end{split}
\end{equation*}
which means $\lambda \bfw_1 + (1 - \lambda) \bfw_2 \in \mathcal{A}$, proving the convexity.

\subsection*{Step \RN{4}} $\mathcal{F}: \mathcal{A} \rightarrow \mathcal{A}$ is continuous.

Consider $\{\bfw_n \}_{n=1}^\infty \subset \mathcal{A}$ such that $\bfw_n \rightarrow \bfw$ in $L^2\left( 0 , T; L^2_\sigma(\Omega)\right)$. We are required to show that $\bfv_n = \mathcal{F} (\bfw_n) \rightarrow \mathcal{F} (\bfw) = \bfv$ in $L^2\left( 0 , T; L^2_\sigma(\Omega)\right)$. First, 
define the difference $\psi_n = \bfv_n - \bfv$, which solves 
\begin{equation*}
\begin{split}
\lb \partial_t \psi_n, \varphi \rb + \left( \bfv_n \cdot \nabla \bfv_n, \varphi\right) - \left( \bfv \cdot \nabla \bfv,  \varphi\right)  + \left(  \mathbf{T} (\D \bfv_n) - \mathbf{T} (\D \bfv) , \nabla \varphi\right) = - \mu \left( I_h (\bfw_n - \bfw) , \varphi\right)
\end{split}
\end{equation*}
for all $\varphi \in W_\sigma^{1,3}(\Omega)$, for almost all $t \in [0, T]$.  Thanks to \cite{MNRR}*{Lemma 2.45}, the incompressibility condition and the regularity \eqref{F-reg}, choosing $\varphi = \psi_n$ in the above equation, we obtain 
\begin{equation*}
\begin{split}
\frac{1}{2} \ddt \|\psi_n\|^2 + \left( \psi_n \cdot \nabla \bfv , \psi_n\right) +  \left(  \mathbf{T} (\D \bfv_n) - \mathbf{T} (\D \bfv) , \D \bfv_n - \D \bfv\right) = - \mu \left( I_h (\bfw_n - \bfw) , \psi_n\right).
\end{split}
\end{equation*}
By exploiting \eqref{T-mon}, the Korn inequality, the \ref{Holder} with $p' = \frac{p}{p-1}$  and the \ref{Interpolation} in $L^p$-spaces with $\theta = 1 - \frac{3}{2p}$,  we find
\begin{equation*}
\begin{split}
\frac{1}{2} \ddt \|\psi_n\|^2  + \nu_0 \|\nabla \psi_n\|^2 & \leq \left|  \left( \psi_n \cdot \nabla \bfv , \psi_n\right)\right| + \left| \mu \left( I_h (\bfw_n - \bfw) , \psi_n\right)\right|\\
& \leq   \|\psi_n\|_{L^{2p'}}^2  \|\nabla \bfv\|_{L^p} + \mu \, \|I_h (\bfw_n - \bfw)\| \,  \|\psi_n\|\\
& \leq \|\psi_n\|^{2 - \frac{3}{p}} \, \|\psi_n\|_{L^6}^{\frac{3}{p}} \, \|\nabla \bfv\|_{L^p} + \mu \, c_I\, \|\bfw_n - \bfw\| \,  \|\psi_n\|\\
& \leq  c_{S}\|\psi_n\|^{2 - \frac{3}{p}} \, \|\nabla \psi_n\|^{\frac{3}{p}} \, \|\nabla \bfv\|_{L^p} + \mu \, c_I\, \|\bfw_n - \bfw\| \,  \|\psi_n\|\\
& \leq \frac{\nu_0}{2}  \|\nabla \psi_n\|^2 + \left( \tilde{c}\, \nu_0^{-\frac{3}{2p-3}} \, c_S^\frac{2p}{2p-3} \, \|\nabla \bfv\|_{L^p}^{\frac{2p}{2p-3}} + \frac14 \right) \|\psi_n\|^2 +  \mu^2 \, c^2_I\, \|\bfw_n - \bfw\|^2,\\
\end{split}
\end{equation*}
where $\tilde{c}$ only depends on $p$. In the above estimate, the constant $c_S$ denotes the \ref{Embedding} $H_0^1(\Omega) \hookrightarrow L^6 (\Omega)$. Therefore,  we obtain
\begin{equation*}
 \ddt \|\psi_n\|^2 \leq  \left( \frac{1}{4} + \tilde{c}\, \nu_0^{-\frac{3}{2p-3}} \, c_S^\frac{2p}{2p-3} \,\|\nabla \bfv\|_{L^p}^{\frac{2p}{2p-3}}  \right)  \|\psi_n\|^2 +  \mu^2\, c_I^2 \, \|\bfw_n - \bfw\|^2 .
\end{equation*}
Applying the Gronwall lemma (see Lemma \ref{Gronwall}) to the above inequality, we get
\begin{equation*}
 \|\psi_n(t)\|^2 \leq  \|\psi_n (0)\|^2\, \mathrm{e}^{\int_0^t\, \lambda(\tau)\, d\tau} + \mu^2\, c_I^2 \int_0^t \,  \|\bfw_n(s) - \bfw (s)\|^2 \, \mathrm{e}^{\int_s^t\lambda(\tau)\, d \tau}\, ds, 
\end{equation*}
for all $t \in [0,T]$,
where $$\lambda(\tau) =   \frac{1}{4} + \tilde{c}\, \nu_0^{-\frac{3}{2p-3}} \, c_S^\frac{2p}{2p-3} \,\|\nabla \bfv(\tau)\|_{L^p}^{\frac{2p}{2p-3}} .$$
Note that having $p \geq \frac{5}{2}$ yields $\frac{2p}{2p-3} \leq p$, thereby the regularity $\bfv \in L^p (0 , T;  W_\sigma^{1,p}(\Omega)) $  entails that $\lambda(\tau)  \in L^1[0, T]$.  In light of $\psi_n(0) =0$, we are led to
\begin{equation*}
 \|\psi_n\|_{L^\infty(0,T;L_\sigma^2(\Omega))} \leq  \mu\, c_I \, e^{\frac12\|\lambda\|_{L^1(0,T)}}\,  \|\bfw_n - \bfw\|_{L^2\left( 0 , T; L^2_\sigma(\Omega)\right)}.
\end{equation*}
Since the right-hand side converges to $0$ as $n\rightarrow \infty$, this implies the continuity of $\mathcal{F}$.

\subsection*{Step \RN{5}} We construct a compact subset $\,\mathcal{K}$ of $\mathcal{A}$ such that $\mathcal{F} (\mathcal{A}) \subset \mathcal{K}$.
From the energy equality \eqref{EnEq} written for the solution to \eqref{fp-problem}, and after using the \ref{Holder}, the Korn inequality and \eqref{I_h},  we have
\begin{equation*}
\begin{split}
 \| \bfv(t)\|^2   &  
 + \int_0^t\,   \left(2\nu_0 \|\nabla \bfv(\tau)\|^2   + \frac{4\nu_1}{c_{\text{K}}^p} \|\nabla \bfv(\tau)\|_{L^p}^p \right)\, d\tau \leq \| \bfv_0\|^2 +  2\int_0^t \, \left( \|\bff_\mu(\tau)\|  + \mu \|I_h \bfw (\tau)\|\right) \| \bfv(\tau)\| \, d\tau \\
 & \leq  \| \bfv_0\|^2 + \frac{2}{\lambda_1} \int_0^t \, \left( \|\bff_\mu(\tau)\|  + \mu \|I_h \bfw (\tau)\|\right) \| \nabla \bfv(\tau)\| \, d\tau\\
 & \leq \| \bfv_0\|^2 +  \nu_0 \int_{0}^t \, \| \nabla \bfv(\tau)\|^2 \, d \tau + \frac{1}{2 \nu_0 \lambda_1^2}
\int_0^t \, \left(\|\bff_\mu(\tau)\|  + \mu \|I_h \bfw (\tau)\|\right)^2 \, d \tau
\\
& \leq \| \bfv_0\|^2 +  \nu_0 \int_{0}^t \, \| \nabla \bfv(\tau)\|^2 \, d \tau + \frac{1}{\nu_0 \lambda_1^2}
\int_0^t \,  \left(\|\bff_\mu(\tau)\|^2  \, +\,  \mu^2 c_{\text{I}}^2 
\|\bfw (\tau)\|^2\right) \, d \tau,
\end{split}
\end{equation*}
 for all $t\in [0,T]$. Thus, we arrive at 
\begin{equation*}
\| \bfv(t)\|^2  + \int_0^t\,   \left( \nu_0 \|\nabla \bfv(\tau)\|^2   + \frac{4\nu_1}{c_{\text{K}}^p} \|\nabla \bfv(\tau)\|_{L^p}^p \right)\, d\tau \leq
\| \bfv_0\|^2+ \frac{1}{\nu_0 \lambda_1^2} \| \bff_\mu\|^2_{L^2 \left( 0, T; L^2_\sigma (\Omega) \right)} \, + \, 
\frac{\mu^2 c_{\text{I}}^2}{\nu_0 \lambda_1^2} \, c_1 e^{c_2 T} \coloneqq \tilde{c}_0,
\end{equation*}
 With  $\tilde{c}_0$ defined as above,  we deduce that 
\begin{equation}
\label{c1tilde}
 \| \bfv\|_{{L^\infty \left( 0, T; L^2_\sigma (\Omega) \right)}} \leq \sqrt{ \tilde{c}_0} \coloneqq  \tilde{c}_1, \quad 
  \| \bfv\|_{{L^p \left( 0, T; W^{1,p}_\sigma (\Omega) \right)}} \leq \left(\frac{\tilde{c}_0 c_{\text{K}}^p}{2 \,  \nu_1} \right)^\frac{1}{p} \coloneqq  \tilde{c}_2.
\end{equation}
Then, we infer that 
$$\mathcal{F} (\mathcal{A}) \subset \mathcal{B} = \left\{  \bfv \in \mathcal{A} \, : \| \bfv\|_{{L^\infty \left( 0, T; L^2_\sigma (\Omega) \right)}} \leq \tilde{c}_1 \hspace{0.4cm}\text{and} \hspace{0.4cm}  \| \bfv\|_{{L^p \left( 0, T; W^{1,p}_\sigma (\Omega) \right)}} \leq \tilde{c}_2 \right\}.$$

Next we investigate the time derivative $\partial_t \bfv$. We recall the weak formulation of \eqref{fp-problem} 
\begin{equation*}
\begin{split}
\lb \partial_t \bfv, \varphi \rb+ \int_{\Omega}  \bfv \cdot \nabla \bfv \cdot\, \varphi\, d\bfx + \int_{\Omega}  2\nu_0 \, \D \bfv : \D \varphi\, &+ \, 2\nu_1 \, |\D \bfv|^{p-2}_F \,  \D \bfv : \D \varphi \, d\bfx \\
& =  \int_{\Omega}  \bff_\mu \cdot \varphi \, d\bfx - \mu \int_{\Omega}  I_h \bfw\cdot \varphi \,d\bfx
\end{split}
\end{equation*}
for all $\varphi \in W_\sigma^{1,p}(\Omega)$, for almost all $t \in [0,T]$. Due to the incompressiblity condition, the nonlinear term can be written as $ \bfv \cdot \nabla \bfv = \nabla \cdot \left( \bfv \otimes  \bfv\right)$. Then, we have
\begin{equation*}
\begin{split}
\left| \lb \partial_t \bfv ,  \varphi\, \rb  \right|  \leq  \left| \int_{\Omega}  \bfv   \otimes \bfv : \nabla \varphi\, d\bfx \right | & + \left| \int_{\Omega} 2 \nu_0 \, \D \bfv : \nabla \varphi\,  + \, 2\nu_1 \, |\D \bfv|^{p-2}_F\,\D \bfv : \nabla \varphi \, d\bfx \right|\\
&+ \left|  \int_{\Omega}  \bff_\mu \cdot \varphi \, d\bfx \right| +  \mu  \left|\int_{\Omega}  I_h \bfw\cdot \varphi \,d\bfx \right|.
\end{split}
\end{equation*}
Let $p' = \frac{p}{p-1}$, and note that $p' < p$ for $p\geq \frac{5}{2}$.   Using the \ref{Holder} along with \eqref{I_h} yields
\begin{equation*}
\begin{split}
\left| \lb \partial_t \bfv ,  \varphi\, \rb  \right|  \leq \|\bfv\|_{L^{2p'}}^2\, \|  \nabla \varphi\|_{L^p} + 2\nu_0 \|  \D \bfv\|_{L^{p'}}\, \| \nabla \varphi \|_{L^p} + 2\nu_1 \|  \D \bfv\|^{p-1}_{L^p} \, \|  \nabla \varphi\|_{L^p} + \| \bff_\mu \|  \, \| \varphi \|  +  \mu c_I\,  \| \bfw \|  \, \| \varphi \|.
\end{split}
\end{equation*}
By taking supremum of the above inequality over all  $\varphi \in W_\sigma^{1,p}(\Omega)$   such that  $\| \varphi\|_{W_\sigma^{1,p}(\Omega)} = 1$,  and using the \ref{Interpolation}, we obtain
\begin{equation*}
\begin{split}
\|\partial_t \bfv\|_{\left(W_\sigma^{1,p}(\Omega)\right)'} &
\leq  \|\bfv\|^2_{L^{2p'}} \, + 2\nu_0 \|  \D \bfv\|_{L^{p'}}\,  + 2\nu_1 \|  \D \bfv\|^{p-1}_{L^p} \, + C \, \| \bff_\mu \|   + \mu \,  c_I \, C \| \bfw\| \\
& \leq  C\,  \|\bfv\|^{\frac{2p-3}{p}}\,  \| \nabla \bfv\|^{\frac{3}{p}}+ 
\nu_0 \, C \,  \| \nabla \bfv\|_{L^2}  + \nu_1 \, C \|  \nabla \bfv\|^{p-1}_{L^p} \, + C \, \| \bff_\mu \|   + \mu \,  c_I \, C \, \| \bfw\|\\
&  \leq  C \, \|\bfv\|^{\frac{2p-3}{p}}\,  \| \nabla \bfv\|_{L^p}^{\frac{3}{p}} + \nu_0 \, C \,  \| \nabla \bfv\|_{L^p}  + \nu_1 \, C \, \|  \nabla \bfv\|^{p-1}_{L^p} \, + C \, \| \bff_\mu \|   + \mu \,  c_I \, C \, \| \bfw\|,
\end{split}
\end{equation*}
where $C$ only depends on $p$ and $\Omega$.  Hence, 
\begin{equation*}
\begin{split}
& \|\partial_t \bfv\|^{p'}_ {L^{p'} \left( 0 , T ; \,  \left(W_\sigma^{1,p}(\Omega)\right)' \right)} = \int_0^T \|\partial_t \bfv(\tau)\|^{p'}_{\left(W_\sigma^{1,p}(\Omega) \right)'}\, d \tau \\
 & \leq C \int_0^T  \|\bfv(\tau)\|^{\frac{2p-3}{p-1}}\,  \|\nabla\bfv(\tau)\|_{L^p}^{\frac{3}{p-1}} \, d\tau  + \nu_0^{p'} \, C \int_0^T \|\nabla \bfv (\tau)\|_{L^p}^{\frac{p}{p-1}}\, d \tau + \nu_1^{p'} C \int_0^T\|\nabla \bfv(\tau)\|_{L^p}^{p} \, d \tau\\
 & \quad + \, C \int_0^T \, \| \bff_\mu(\tau) \|^{p'}\, d\tau + \mu^{p'} \,  c_I^{p'} \, C \int_0^T \| \bfw(\tau)\|^{p'} \,dt\\
 &\leq C \, \|\bfv\|^{\frac{2p-3}{p-1}}_{L^\infty \left(0, T; L^2_\sigma(\Omega)\right)} \, T^{\frac{1}{\alpha}} \,  \|\bfv\|_{L^p\left(0, T; W^{1,p}_\sigma(\Omega)\right)}^{\frac{3}{p-1}}  \, + \,  \nu_0^{p'} \, C\, T^{\frac{1}{\beta}} \, \|\bfv\|_{L^p\left(0, T; W^{1,p}_\sigma(\Omega)\right)}^{p'} \, + \, \nu_1^{p'} \,C\, \|\bfv\|_{L^p\left(0, T; W^{1,p}_\sigma(\Omega)\right)}^p\\
 & \quad + \,  C \, T^{\frac{1}{\gamma}} \, \|\bff_\mu\|^{p'}_{L^2\left(0, T; L^2_\sigma(\Omega)\right)}  + \mu^{p'} \,  c_I^{p'} \, C \,  \, T^{\frac{1}{\gamma}} \, \|\bfw\|^{p'}_{L^2\left(0, T; L^2_\sigma(\Omega)\right)} \\
& \coloneqq \tilde{c}_3^{p'}.
\end{split}
\end{equation*}
where $\alpha,  \beta $ and $\gamma$ are the conjugate exponents to $(p-1)p/3, p-1 $  and $2/p'$, respectively, and the constant $C$ depends only on $p$ and $\Omega$. Given $\tilde{c}_3^{p'}$ as above, we have
\begin{equation}
\label{c3tilde}
\|\partial_t \bfv\|_ {L^{p'} \left( 0 , T ; \,  \left(W_\sigma^{1,p}(\Omega)\right)' \right)} \leq \tilde{c}_3.
\end{equation}

Finally, with $\tilde{c}_1, \tilde{c}_2, \tilde{c}_3$ given in \eqref{c1tilde} and \eqref{c3tilde}, respectively, we infer that
\begin{equation}\label{K}
\mathcal{F} (\mathcal{A}) \subset \mathcal{K},
\end{equation}
where 
$$
\mathcal{K}= \left\{  \bfv \in \mathcal{A} \, : \| \bfv\|_{{L^\infty \left( 0, T; L^2_\sigma (\Omega) \right)}} \leq \tilde{c}_1, \, \, \| \bfv\|_{{L^p \left( 0, T; W^{1,p}_\sigma (\Omega) \right)}} \leq \tilde{c}_2  \hspace{0.2cm}\text{and} \hspace{0.2cm} \|\partial_t \bfv\|_ {L^{p'} \left( 0 , T ; \,  \left(W_\sigma^{1,p}(\Omega)\right)' \right)} \leq \tilde{c}_3\right\}.  
$$
We are left to show that $\mathcal{K}$ is a compact subset of $\mathcal{A}$. Since  $W_{\sigma}^{1,p}(\Omega) \subset L_\sigma^2(\Omega) \subset  \big( W_\sigma^{1,p}(\Omega) \big)'$, 
thanks to Theorem  \ref{Aubin-Lions-Simon}, we deduce that
$\mathcal{K}$ is compactly embedded in $ L^p\left( 0 , T, L^2_\sigma(\Omega)\right)$,
and, in turn, in $L^2\left( 0 , T, L^2_\sigma(\Omega)\right)$  since $p \geq \frac{5}{2}$.   Therefore, to summarize it is proved that 
$$\mathcal{F}(\mathcal{A}) \subset \mathcal{K} \overset{c}{\hookrightarrow} \mathcal{A}$$
where  $\mathcal{K}$ is a compact subset of $\mathcal{A}$ with respect to the norm $L^2\left( 0 , T, L^2_\sigma(\Omega)\right)$. 
As a consequence of Theorem \ref{Schauder fixed-point}, $\mathcal{F}: \mathcal{A} \rightarrow \mathcal{A}$ has a fixed point in $\mathcal{K}$, which implies the  existence result in Theorem \ref{thmE!}. Lastly, the uniqueness of the weak solution to problem \eqref{Lady-DA} is obtained from the same  argument of Step \RN{4} by replacing $\bfv_n$ and $\bfv$ with two solutions $\bfv_1$ and $\bfv_2$, respectively, originating from the same initial datum $\bfv_0$.  
\end{proof}
\medskip
Next, we prove the convergence result.

\begin{thm}\label{thm2}
  For $p \geq \frac{5}{2}$, let $\bff\in L^2(\Omega)$ and let $\bfu$ be a weak solution of \eqref{Lady} with no-slip Dirichlet boundary conditions departing from $\bfu_0 \in L_\sigma^2(\Omega)$. Let $\bfv$ be the solution to the data assimilation algorithm given by \eqref{Lady-DA}. Then, for $\mu$ large enough such that 
  $$\mu \geq \tilde{c}
   \, \nu_0^{\frac{3}{2p-3}} \, \nu_1^{\frac{-2}{2p-3}} \lambda_1^{\frac{1}{2p-3}}\, G^{\frac{4}{2p-3}},$$
   and $h$ small enough such that 
 $$\mu \,  c_0^2\,  h^2 \leq \nu_0,$$
 where $\tilde{c}$  is a dimensionless number depending only on $p$ and $\Omega$, while $c_0$ is dimensionless constant given in \eqref{I_h},  we have 
$$
\| \bfu(t) - \bfv(t)\|_{L^2(\Omega)} \rightarrow 0,
$$
at an exponential rate, as $t \rightarrow \infty$. 
\end{thm}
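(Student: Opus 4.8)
The plan is to derive a differential inequality for the squared error $Y(t)=\|\bfw(t)\|^2$, where $\bfw=\bfv-\bfu$, and then invoke the uniform Gronwall Lemma \ref{Gronwall2}. First I would subtract \eqref{Lady} from \eqref{Lady-DA} to obtain the evolution equation for $\bfw$,
\begin{equation*}
\partial_t\bfw + (\bfv\cdot\grad)\bfv - (\bfu\cdot\grad)\bfu - \grad\cdot\big(\mathbf{T}(\D\bfv) - \mathbf{T}(\D\bfu)\big) + \grad(Q-P) = -\mu\, I_h\bfw,
\end{equation*}
and test it against $\bfw$ (justified exactly as in Step \RN{4} through \cite{MNRR}*{Lemma 2.45}, using that $\bfw\in L^p(0,T;W^{1,p}_\sigma(\Omega))$ with $p\ge\frac52$). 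Since $\bfw$ is solenoidal and vanishes on $\partial\Omega$, the pressure drops out, and writing $(\bfv\cdot\grad)\bfv-(\bfu\cdot\grad)\bfu=(\bfu\cdot\grad)\bfw+(\bfw\cdot\grad)\bfu+(\bfw\cdot\grad)\bfw$ leaves only the term $((\bfw\cdot\grad)\bfu,\bfw)$, the other two pairing to zero against $\bfw$. Using the monotonicity \eqref{T-mon} from Proposition \ref{propT-Mon} together with Korn's inequality ($p=2$) for the stress difference then gives
\begin{equation*}
\frac12\ddt\|\bfw\|^2 + \nu_0\|\grad\bfw\|^2 \le \big|((\bfw\cdot\grad)\bfu,\bfw)\big| - \mu(I_h\bfw,\bfw).
\end{equation*}

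Next I would estimate the right-hand side. For the nonlinear term, Hölder's inequality yields $|((\bfw\cdot\grad)\bfu,\bfw)|\le\|\bfw\|_{L^{2p'}}^2\|\grad\bfu\|_{L^p}$, and the Lebesgue interpolation between $L^2$ and $L^6$ with $\theta=1-\frac{3}{2p}$, followed by the Sobolev embedding $H^1_0(\Omega)\hookrightarrow L^6(\Omega)$, bounds $\|\bfw\|_{L^{2p'}}^2\le c_S^{3/p}\|\bfw\|^{2-3/p}\|\grad\bfw\|^{3/p}$. A Young inequality with conjugate exponents $\frac{2p}{3}$ and $\frac{2p}{2p-3}$ absorbs $\frac{\nu_0}{2}\|\grad\bfw\|^2$ and produces $C\,\nu_0^{-\frac{3}{2p-3}}c_S^{\frac{6}{2p-3}}\|\grad\bfu\|_{L^p}^{\frac{2p}{2p-3}}\|\bfw\|^2$, where the exponent on $\|\bfw\|$ collapses to $2$ precisely because $(2-\frac3p)\frac{2p}{2p-3}=2$. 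For the nudging term I would write $-\mu(I_h\bfw,\bfw)=-\mu\|\bfw\|^2+\mu(\bfw-I_h\bfw,\bfw)$ and use the second estimate in \eqref{I_h} (with Poincaré's inequality to replace the $H^1$ norm by $\|\grad\bfw\|$, valid since $\bfw\in H^1_0$) and Young's inequality to get $\mu(\bfw-I_h\bfw,\bfw)\le\frac{\nu_0}{2}\|\grad\bfw\|^2+\frac{\mu^2c_0^2h^2}{2\nu_0}\|\bfw\|^2$. Absorbing all gradient terms into the dissipation, the condition $\mu c_0^2h^2\le\nu_0$ gives $-\mu+\frac{\mu^2c_0^2h^2}{2\nu_0}\le-\frac\mu2$, so that
\begin{equation*}
\ddt\|\bfw\|^2 + \alpha(t)\,\|\bfw\|^2 \le 0, \qquad \alpha(t)=\mu - 2C\,\nu_0^{-\frac{3}{2p-3}}c_S^{\frac{6}{2p-3}}\|\grad\bfu(t)\|_{L^p}^{\frac{2p}{2p-3}}.
\end{equation*}

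The crux is verifying the hypothesis $\limsup_{t\to\infty}\int_t^{t+T}\alpha(s)\,ds\ge\beta>0$ of Lemma \ref{Gronwall2}. Here I would exploit that $p\ge\frac52$ forces $\frac{2p}{2p-3}\le p$, so Jensen's inequality controls the lower power by the higher,
\begin{equation*}
\int_t^{t+T}\|\grad\bfu\|_{L^p}^{\frac{2p}{2p-3}}\,d\tau \le T^{1-\frac{2}{2p-3}}\left(\int_t^{t+T}\|\grad\bfu\|_{L^p}^{p}\,d\tau\right)^{\frac{2}{2p-3}},
\end{equation*}
and the inner integral is uniformly bounded for $t\ge t_0$ by Korn's inequality and the dissipation bound \eqref{W1pBound} of Proposition \ref{Prop1}. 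Choosing $T=(\nu_0\lambda_1)^{-1}$, so that $1+\nu_0\lambda_1T=2$, and tracking the powers of $\nu_0,\nu_1,\lambda_1,G$ reproduces exactly the stated threshold $\tilde c\,\nu_0^{\frac{3}{2p-3}}\nu_1^{-\frac{2}{2p-3}}\lambda_1^{\frac{1}{2p-3}}G^{\frac{4}{2p-3}}$ with $\tilde c$ depending only on $p$ and $\Omega$; for $\mu$ above it one obtains $\int_t^{t+T}\alpha\,ds\ge\beta>0$ for all large $t$, and Lemma \ref{Gronwall2} delivers the exponential decay $\|\bfw(t)\|\to0$. I expect the main obstacle to be the bookkeeping that converts the uniform-in-time dissipation bound into the clean threshold with the correct exponents — in particular the choice of $T$ balancing the factor $(1+\nu_0\lambda_1T)$ — rather than the (standard) energy estimate itself; the only genuinely technical point beyond that is the admissibility of $\bfw$ as a test function when $\frac52\le p<3$, which I would handle as in Step \RN{4}.
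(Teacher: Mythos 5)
Your proposal is correct and follows essentially the same route as the paper: the same energy identity for the error via the monotonicity property \eqref{T-mon} and Korn's inequality, the same splitting of the nudging term, the same interpolation/Young estimate of the trilinear term yielding the exponent $\frac{2p}{2p-3}$, and the same application of Lemma \ref{Gronwall2} after a H\"older-in-time step with $T=(\nu_0\lambda_1)^{-1}$ and the bound \eqref{W1pBound}. The only differences (sign convention for the error, a slightly different but equivalent Young split of $\mu(\bfw-I_h\bfw,\bfw)$, and writing the convective difference as three terms rather than two) are cosmetic.
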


\begin{proof}
Subtracting \eqref{Lady-DA} and \eqref{Lady},  the difference $\bfe = \bfu - \bfv$ satisfies the following error equation 
\begin{align} 
\label{thm2Eq1}
\lb \partial_t \bfe, \bfw \rb+
((\bfu \cdot \nabla) \bfu, \bfw) 
- ((\bfv \cdot \nabla) \bfv, \bfw) 
+ (\mathbf{T} ( \D\bfu) -  \mathbf{T} ( \D\bfv), \D \bfw) 
= - \mu\, ( I_h\bfe, \bfw).
\end{align}
Since
$$ 
(\bfu \cdot \nabla) \bfu   - (\bfv \cdot \nabla) \bfv  =  (\bfe \cdot \nabla) \bfu + (\bfv \cdot \nabla) \bfe,
$$
taking $\bfw  = \bfe$ and using \cite{MNRR}*{Lemma 2.45}, the Korn inequality and \eqref{T-mon}, we obtain
 \begin{equation} \label{thm2Eq2}
\frac{1}{2}  \ddt \|\bfe\|^2 + 
\nu_0 \|\nabla \bfe\|^2 \leq  - ( (\bfe \cdot \nabla) \bfu\, , \, \bfe) - (\mu\,  I_h\bfe\, , \, \bfe). 
\end{equation}
In light of \eqref{I_h} and the assumption $\mu \,  c_0^2\,  h^2 \leq \nu_0$, one can estimate   the nudging term in (\ref{thm2Eq2})  as 
\begin{equation}\label{thm2Eq4}
\begin{split}
- \mu\, (\,  I_h\bfe , \bfe) & =  - \mu\, ( I_h\bfe - \bfe + \bfe , \bfe)\\
& =  \mu\, (\bfe  - I_h\bfe , \bfe) - \mu \|\bfe \|^2\\
& \leq \frac{\mu}{2} \|\bfe  - I_h\bfe \|^2 + \frac{\mu}{2} \|\bfe\|^2  - \mu \|\bfe\|^2\\
& \leq\frac{\mu}{2} c_0^2\,  h^2 \|\nabla \bfe\|^2 -  \frac{\mu}{2} \|\bfe\|^2 \\
& \leq \frac{\nu_0}{2} \|\nabla \bfe\|^2  -  \frac{\mu}{2} \|\bfe\|^2.
\end{split}
\end{equation}
Thus,  we have
 \begin{equation} \label{thm2Eq5}
\frac{1}{2}  \ddt \|\bfe\|^2 +  \frac{\nu_0}{2} \|\nabla \bfe\|^2  \leq  | \left( \left(\bfe \cdot \nabla\right) \bfu , \bfe\right)  |-  \frac{\mu}{2} \|\bfe\|^2.
\end{equation}
Take $p$ and $p'$ to be conjugate numbers, i.e., $ p' = \frac{p}{p-1}$, and  apply the \ref{Interpolation},  \ref{Embedding} and \ref{Young}  to estimate the above nonlinear term as
\begin{equation}\label{thm2Eq6}
\begin{split}
 | \left( \left(\bfe \cdot \nabla\right) \bfu , \bfe\right) & | \leq   \| \bfe^2\|_{L^{p'}}  \, \|\nabla \bfu\|_{L^p} =     \| \bfe\|^2_{L^{2p'}}\, \|\nabla \bfu\|_{L^p}  \leq  \|\bfe\|^{2 - \frac{3}{p}}\, \|\bfe\|_{L^6}^{\frac{3}{p}} \,  \|\nabla \bfu\|_{L^p} \\
 & \leq  c_S^\frac{3}{p} \|\bfe\|^{2 - \frac{3}{p}}\, \| \nabla \bfe\|^{\frac{3}{p}} \, \|\nabla \bfu\|_{L^p}  \leq \frac{\nu_0}{2}  \| \nabla \bfe\|^2 + \frac{\bar{c}}{2}  \, \nu_0^{\frac{3}{3-2p}}\,  \|\nabla \bfu\|_{L^p}^{\frac{2p}{2p-3}} \, \|\bfe\|^2,
\end{split}
\end{equation}
for some $\bar{c}$ depending only on $p$ and $\Omega$. 
Inserting \eqref{thm2Eq6} in \eqref{thm2Eq5}, we get 
\begin{equation}\label{thm2Eq7}
\ddt \|\bfe\|^2 + \big( \mu -  \bar{c} \,  \nu_0^{\frac{3}{3-2p}} \,  \|\nabla \bfu\|_{L^p}^{\frac{2p}{2p-3}}\big)\,  \|\bfe\|^2 \leq 0.
\end{equation}
With Lemma \ref{Gronwall2} in mind, denote 
$$\alpha(t) =   \mu -  \bar{c} \,  \nu_0^{\frac{3}{3-2p}} \,  \|\nabla \bfu(t)\|_{L^p}^{\frac{2p}{2p-3}}.$$
Applying H\"older's inequality, and choosing $T =\,  \left( \nu_0\, \lambda_1\right)^{-1}$ in \eqref{W1pBound},  we obtain for $p\geq \frac52$
\begin{equation*}
\begin{split}
\int_t^{t+T}\, \alpha(s)\, ds & = \mu T  - \bar{c} \,  \nu_0^{\frac{3}{3-2p}} \, \int_t^{t+T}\,  \|\nabla \bfu (s)\|_{L^p}^{\frac{2p}{2p-3}} ds\\
& \geq \mu T  - \bar{c} \,  \nu_0^{\frac{3}{3-2p}}\,  T^{\frac{2p-5}{2p-3}}\,  \left(\int_t^{t+T} \|\nabla \bfu (s)\|_{L^p}^{p} \, ds\right)^{\frac{2}{2p-3}}\\
& \geq \mu T  - \bar{c} \,  \nu_0^{\frac{3}{3-2p}}\,  T^{\frac{2p-5}{2p-3}} \left( 2 c_K^p \left( 1 + \nu_0\, \lambda_1 \, T\right)\, \nu_0^2 \, \nu_1^{-1}\, \lambda_1^{-\frac{1}{2}}\, G^2 \right)^{\frac{2}{2p-3}}\\
& = \frac{\mu}{\nu_0\, \lambda_1} - 2^\frac{4}{2p-3} \bar{c} \, 
c_K^\frac{2p}{2p-3}\, \nu_0^{\frac{6-2p}{2p -3}}\,  \nu_1^{ \frac{-2}{2p-3}}\, \lambda_1^{\frac{4-2p}{2p-3}} \, G^{\frac{4}{2p-3}}.
\end{split}
\end{equation*}
Thus, from above and with $\mu \geq 2^{1+\frac{4}{2p-3}} \,  \bar{c} \, c_K^\frac{2p}{2p-3}\, \nu_0^{\frac{3}{2p-3}} \, \nu_1^{\frac{-2}{2p-3}}\,  \lambda_1^{\frac{1}{2p-3}}\, G^{\frac{4}{2p-3}}$ we have 
$$ \int_t^{t+T}\, \alpha(s)\, ds \geq 2^\frac{4}{2p-3} \, \bar{c} \, c_K^\frac{2p}{2p-3}\, \nu_0^{\frac{6 - 2p}{2p -3}}\,  \nu_1^{ \frac{-2}{2p-3}}\, \left( \frac{1}{\lambda_1} \right)^{\frac{2p-4}{2p-3}} \, G^{\frac{4}{2p-3}} >0,$$
and finally by applying  Lemma \ref{Gronwall2} to \eqref{thm2Eq7},  we conclude that $\|\bfe\| = \| \bfu - \bfv\| \rightarrow 0$ exponentially fast as $t \rightarrow \infty$.
\end{proof}

\section{The case \texorpdfstring{$p=\frac{11}{5}$}{p=11/5} with periodic boundary conditions}

In this section we study the dynamics of strong solutions for the Ladyzhenskaya model \eqref{Lady}$_{1-2}$ and the corresponding data assimilation algorithm \eqref{Lady-DA}$_{1-2}$ in $\Omega=[0,2\pi]^3$ completed with periodic boundary conditions. 

Since the average velocity $\overline{\bfu}(t)=\int_{\Omega} \bfu (\bfx,t) \, d \bfx$ is an invariant of the flow provided that $\int_{\Omega} \bff (\bfx,t) \, d \bfx=0$ and the interpolant operators (volume elements or Fourier modes) have zero spatial average, we consider without loss of generality that $\overline{\bfv}(t)=0$ for all $t\geq 0$. 

\begin{thm}[\textbf{Existence of weak solutions and their propagation of regularity}]
\label{115}
Let $p= \frac{11}{5}$, $\bff\in L^2(0,T;\dot{L}^2(\Omega))$ and $\bfu_0 \in \dot{L}^2_\sigma$. Then, there exists a weak solution $\bfu$ to \eqref{Lady}$_{1-2}$ on $(0,\infty)$ with periodic boundary conditions such that
\begin{equation}
\label{reg-weak}
\bfu \in \mathcal{C}([0,T]; \dot{L}_\sigma^2(\Omega)) \cap L^\frac{11}{5} (0 , T;  W_\sigma^{1,\frac{11}{5}}(\Omega)), \quad \partial_t \bfu \in L^{\frac{11}{6}}(0,T;(W_\sigma^{1,\frac{11}{5}}(\Omega))'), \quad 
\forall \, T \geq 0,
\end{equation}
and 
\begin{equation}
\label{weak-prob}
\lb \partial_t \bfu, \bfw \rb + ( (\bfu \cdot \nabla) \bfu, \bfw)   + ( \mathbf{T}(\D \bfu), \nabla \bfw)= (\bff, \bfw), \quad \forall \, \bfw \in W_\sigma^{1,\frac{11}{5}}(\Omega),
\end{equation}
for almost all $t\in [0,T]$. 
 Moreover,  the energy equality holds
\begin{equation}
\label{EnEq2}
\frac12 \| \bfu(t)\|^2+ \int_{0}^t \left( 2\nu_0 \|\D \bfu (\tau)\|^2  + 2\nu_1 \|\D \bfu (\tau)\|_{L^\frac{11}{5}}^\frac{11}{5} \right) \, d\tau 
= \frac12 \| \bfu_0\|^2 + \int_0^t (\bff(\tau),\bfu(\tau)) \, d \tau, \quad \forall \, t \geq 0.
\end{equation}
In particular, if $\bff\in \dot{L}^2(\Omega)$, there exists a time $t_0 > 0$
such that for all $t \geq t_0$ we have
\begin{equation}\label{L2Bound-2}
\|\bfu(t)\|^2 \leq  2 \frac{\nu_0^2 G^2}{\lambda_1^\frac12}
\end{equation}
and 
\begin{equation}\label{W1pBound-2}
\int_t^{t+T}\, \left( \nu_0\|\D \bfu(\tau)\|^2+ \nu_1\| \D \bfu(\tau)\|_{L^\frac{11}{5}}^\frac{11}{5}\right) \, d\tau \leq 
2\left(1+ \nu_0 \lambda_1 T \right) 
\frac{\nu_0^2 G^2}{\lambda_1^\frac12},
\end{equation}
where $G$ is defined as in \eqref{Grashof}.
In addition, there exists $\overline{t} \in [t_0,t_0+1]$ such that 
\begin{equation}
\label{reg-strong}
\bfu \in L^\infty(\overline{t},T;\dot{H}^1_\sigma(\Omega))\cap L^2(\overline{t},T;\dot{H}^2_\sigma(\Omega))\cap L^\frac{11}{5}(\overline{t},T;W^{1,\frac{33}{5}}(\Omega)), \quad
\forall \, T\geq \overline{t},
\end{equation}
and
\begin{equation}
\int_t^{t+r} \|\nabla \bfu(\tau) \|_{L^\frac{33}{5}}^\frac{11}{5} \, d \tau\leq \frac{1}{K_1} \left( R_3+K_2 R_2 R_3 +K_3 R_2 + \nu_0^2 \lambda_1^\frac12 G^2 \right), \quad \forall \, t \geq t_1,
\end{equation}
where $r=(\nu_0 \lambda_1)^{-1}$, $t_1=\overline{t}+r$. The constants $K_1$, $K_2$, $K_3$ are defined in \eqref{K13}, and $R_1, R_2, R_3$ are given in \eqref{R12}-\eqref{R3}.
\end{thm}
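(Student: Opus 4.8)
The plan is to separate the statement into a standard part and a novel part. The existence of a weak solution with the regularity \eqref{reg-weak}, the weak formulation \eqref{weak-prob}, and the energy equality \eqref{EnEq2} for the periodic problem at $p=\frac{11}{5}$ is exactly the theory built in \cite{MNRR}, so for these I would invoke that reference (note $p'=\frac{11}{6}$ governs the time-derivative bound, and uniqueness is not asserted). The absorbing-ball estimates \eqref{L2Bound-2} and \eqref{W1pBound-2} then follow precisely as in Proposition \ref{Prop1}: dropping the nonnegative $\nu_1$ term in \eqref{EnEq2} and combining the \ref{Poincare} with Lemma \ref{Gronwall} drives $\|\bfu(t)\|^2$ into the absorbing ball \eqref{L2Bound-2}, while integrating \eqref{EnEq2} over $[t,t+T]$ and inserting \eqref{L2Bound-2} yields \eqref{W1pBound-2}.

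The core of the proof is the propagation of regularity \eqref{reg-strong}. Since \eqref{W1pBound-2} controls $\int_{t_0}^{t_0+1}\|\D\bfu(\tau)\|^2\,d\tau$, a mean-value (Chebyshev) argument produces a time $\overline{t}\in[t_0,t_0+1]$ with $\|\D\bfu(\overline{t})\|<\infty$, i.e. $\bfu(\overline{t})\in\dot{H}^1_\sigma(\Omega)$. Starting from this regular datum I would run a higher-order energy estimate, testing the momentum balance in \eqref{Lady} with $-\lap\bfu$ (legitimate in the periodic divergence-free setting and rigorously justified on a Galerkin basis). The pressure term drops out, the $\nu_0$ part of \eqref{Tensor} produces $\nu_0\|\lap\bfu\|^2$, and the decisive feature is that the nonlinear part of \eqref{Tensor} generates, up to a constant, the enhanced dissipation $\nu_1\big\|\nabla\big(|\D\bfu|^{\frac{p-2}{2}}\D\bfu\big)\big\|^2$, giving a differential inequality of the form
\begin{equation*}
\ddt\|\nabla\bfu\|^2 + \nu_0\|\lap\bfu\|^2 + c\,\nu_1\big\|\nabla\big(|\D\bfu|^{\frac{p-2}{2}}\D\bfu\big)\big\|^2 \leq \big|\big((\bfu\cdot\nabla)\bfu,\lap\bfu\big)\big| + \big|(\bff,\lap\bfu)\big|.
\end{equation*}
After absorbing the forcing by the \ref{Young}, the remaining task is to control the convective term.

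To extract the $W^{1,\frac{33}{5}}$ bound I would exploit the \ref{Embedding} $W^{1,2}(\Omega)\hookrightarrow L^6(\Omega)$. The quantity $|\D\bfu|^{\frac{p-2}{2}}\D\bfu$ has $L^2$ norm equal to $\|\D\bfu\|_{L^p}^{p/2}$, already controlled by \eqref{W1pBound-2}, so together with the enhanced dissipation its full $H^1$ norm is controlled; the embedding then gives $\big\||\D\bfu|^{\frac{p}{2}}\big\|_{L^6}^2=\|\D\bfu\|_{L^{3p}}^{p}$, and since $3p=\frac{33}{5}$ and $p=\frac{11}{5}$ this is $\|\D\bfu\|_{L^{\frac{33}{5}}}^{\frac{11}{5}}$, which through the \ref{Korn} furnishes the time-integrated bound on $\|\nabla\bfu\|_{L^{\frac{33}{5}}}^{\frac{11}{5}}$. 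The same $L^{3p}$ norm is what I would use to dominate the convective term via the \ref{Holder} and the \ref{Interpolation}, choosing the exponents so that part of it is absorbed by $\nu_0\|\lap\bfu\|^2$ and part by the enhanced dissipation. Finally, feeding the resulting differential inequality together with \eqref{L2Bound-2}--\eqref{W1pBound-2} into the uniform Gronwall lemma (Lemma \ref{unif-gronw}) upgrades the local-in-time control from $\overline{t}$ to the uniform bounds \eqref{reg-strong} and to the stated time-averaged estimate, the constants $R_1,R_2,R_3$ collecting the absorbing-ball quantities and $K_1,K_2,K_3$ the structural constants of the estimates.

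I expect the main obstacle to be exactly the closure of the convective term $\big|\big((\bfu\cdot\nabla)\bfu,\lap\bfu\big)\big|$ at the endpoint $p=\frac{11}{5}$. In three dimensions this term is critical, and the balance of exponents in the \ref{Holder} and the \ref{Interpolation} against the two dissipation mechanisms, $\nu_0\|\lap\bfu\|^2$ and $\nu_1\big\|\nabla(|\D\bfu|^{\frac{p-2}{2}}\D\bfu)\big\|^2$, is sharp precisely at $p=\frac{11}{5}$: a smaller $p$ would leave a supercritical power of $\|\nabla\bfu\|$ on the right-hand side, producing a Riccati-type blow-up rather than a globally integrable bound. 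Getting this balance right, and then threading it through Lemma \ref{unif-gronw} so that the bounds hold for all $t\geq t_1$ rather than only locally in time, is where the real care is needed.
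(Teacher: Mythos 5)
Your outline follows the paper's strategy almost step for step: citing \cite{MNRR} for existence, the weak formulation and the energy equality; the absorbing-ball bounds as in Proposition \ref{Prop1}; the mean-value selection of $\overline{t}\in[t_0,t_0+1]$ with $\bfu(\overline{t})\in\dot H^1_\sigma(\Omega)$; the $-\Delta\bfu$ energy estimate in which the shear-dependent viscosity yields the coercive term $\|\nabla |\D\bfu|_F^{11/10}\|^2$, hence via $H^1\hookrightarrow L^6$ and Korn the dissipation $\|\nabla\bfu\|_{L^{33/5}}^{11/5}$; the interpolation of the convective term between $L^{11/5}$, $L^{2}$ and $L^{33/5}$ so that the leftover is $\|\nabla\bfu\|_{L^{11/5}}^{11/5}\|\nabla\bfu\|^{2}$; and the uniform Gronwall lemma (Lemma \ref{unif-gronw}) to reach \eqref{R3} and \eqref{335}. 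A minor discrepancy: in the paper the convective term, reduced to $\|\nabla\bfu\|_{L^3}^3$ after one piece cancels by incompressibility, is absorbed only into the $L^{33/5}$ dissipation, not partly into $\nu_0\|\Delta\bfu\|^2$ as you suggest; only the forcing uses the Laplacian term. That changes nothing essential.

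The one genuine gap is in how you pass from $\bfu(\overline t)\in\dot H^1_\sigma(\Omega)$ to the regularity \eqref{reg-strong} of \emph{the given} weak solution. Testing the equation satisfied by a weak solution with $-\Delta\bfu$ is not admissible ($\Delta\bfu$ is not a legitimate test function in $W^{1,\frac{11}{5}}_\sigma(\Omega)$), and the Galerkin justification you invoke constructs, from the datum $\bfu(\overline t)$, \emph{some} strong solution $\widetilde{\bfu}$ on $[\overline t,\infty)$ --- a priori a different trajectory, since weak solutions are not known to be unique at $p=\tfrac{11}{5}$, which is precisely the point of this section. The paper closes this loop by invoking the existence of a unique strong solution emanating from $H^1$ data (\cite{MNRR}*{Theorem 3.4, Theorem 4.5 and Remark 4.6}, \cite{MR2005}*{Theorem 4.1}) together with the \emph{weak-strong uniqueness principle} of \cite{MR2005}*{Theorem 5.2}, which forces $\widetilde{\bfu}=\bfu$ on $[\overline t,\infty)$; only then are the formal $-\Delta\bfu$ estimates performed on this now-identified solution. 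Without that ingredient your argument shows only that \emph{a} weak solution with the stated regularity exists, not that any given weak solution --- in particular the reference solution being observed in Theorem \ref{thm4} --- regularizes after time $\overline t$. You should add the weak-strong uniqueness step explicitly.
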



\begin{proof}
The first part of Theorem \ref{115} is proved in \cite{MNRR}*{Section 5} (see also \cite{MR2005}{Theorem 3.1}). Let us now consider a generic\footnote{Indeed, in the case $p \in [ \frac{11}{5}, \frac{5}{2})$, the weak solutions are not known to be unique (cf. \cite{MR2005}).} weak solution $\bfu$ to \eqref{Lady}$_{1-2}$ on $(0,\infty)$ satisfying
\eqref{reg-weak}, \eqref{weak-prob}, \eqref{EnEq2},  \eqref{L2Bound-2} and \eqref{W1pBound-2}. It follows from \eqref{W1pBound-2} that there exists $\overline{t}\in [t_0,t_0+1]$ such that 
$$
\| \D \bfu(\overline{t})\| \leq \left( 2\left(1+ \nu_0 \lambda_1 \right) 
\frac{\nu_0 G^2}{\lambda_1^\frac12} \right)^\frac12.
$$
Since $\bfu(\overline{t})\in \dot{H}^1_\sigma$, we infer from \cite{MNRR}*{Theorem 3.4, Theorem 4.5 and Remark 4.6} (see also \cite{MR2005}*{Theorem 4.1}) that there exists a unique strong solution $\widetilde{\bfu}$ on $[\overline{t},\infty)$ originating from $\bfu$
such that
$$
\widetilde{\bfu} \in L^\infty(\overline{t},T;\dot{H}^1_\sigma(\Omega))\cap L^2(\overline{t},T;\dot{H}^2_\sigma(\Omega))\cap L^\frac{11}{5}(\overline{t},T;W^{1,\frac{33}{5}}(\Omega)), \quad
\forall \, T\geq \overline{t},
$$
In addition, in light of the weak-strong uniqueness principle proved in \cite{MR2005}*{Theorem 5.2}, we infer that $\widetilde{\bfu}(t)= \bfu(t)$ for any $t\in [\overline{t},\infty)$. This, in turn, gives \eqref{reg-strong}. 

We now perform some formal Sobolev estimates whose rigorous justification can be performed through the Galerkin scheme.
By definition of the Stokes operator in the periodic setting, multiplying \eqref{Lady-DA}$_{1}$ by $-\Delta \bfu$ and integrating over $\Omega$, we obtain
\begin{equation}
\begin{split}
\frac12 \ddt \| \nabla \bfu\|^2 &+ \nu_0\|\Delta \bfu\|^2 +
2\nu_1 \int_{\Omega} \nabla \cdot (|\D \bfu|_F^{\frac{1}{5}} \D \bfu) \cdot \Delta \bfu \, d \bfx\\
&= - \int_{\Omega} \bff \cdot \Delta \bfu \, d \bfx+\int_{\Omega} (\bfu\cdot \nabla) \bfu \cdot \Delta \bfu \, d \bfx.
\end{split}
\end{equation}
Here we have used that $\nabla \cdot \left( (\nabla \bfu)^T \right)= \nabla (\nabla \cdot \bfu)=0$ by \eqref{Lady-DA}$_2$. A direct calculation shows that
\begin{equation}
\label{deri}
\partial_k (|\D \bfu|_F^n)= n |\D \bfu|_F^{n-2} \D \bfu : \D (\partial_k \bfu), \quad \forall \, n>0. 
\end{equation}
Using integration by parts and \eqref{deri} with $n=p-2$, we have for $p\geq 2$ 
\begin{equation}
\begin{split}
&\int_{\Omega} \nabla \cdot \left(|\D \bfu|_F^{p-2} \D \bfu \right) \cdot \Delta \bfu \, d \bfx
=\int_{\Omega} \partial_j \left(|\D \bfu|_F^{p-2} (\D \bfu)_{ij}\right) \partial_{kk} \bfu_i \, d \bfx\\
&\quad=- \int_{\Omega} |\D \bfu|_F^{p-2} (\D \bfu)_{ij} \partial_{kk} \partial_j \bfu_i \, d \bfx\\
&\quad= \int_{\Omega} \partial_k \left(|\D \bfu|_F^{p-2} (\D \bfu)_{ij}\right) \partial_k (\D \bfu)_{ij} \, d \bfx\\
&\quad= \int_{\Omega} \partial_k \left( |\D \bfu|_F^{p-2} \right) 
(\D \bfu)_{ij} \partial_k (\D \bfu)_{ij} \, d \bfx +
\int_{\Omega} |\D \bfu|_F^{p-2}
\partial_k (\D \bfu)_{ij} \partial_k (\D \bfu)_{ij} \, d \bfx\\
&\quad= \int_{\Omega} (p-2) |\D \bfu|_F^{p-4} (\D \bfu)_{lm} (\D \partial_k \bfu)_{lm} \, (\D \bfu)_{ij} (\D \partial_k \bfu)_{ij} \, d \bfx +
\int_{\Omega} |\D \bfu|_F^{p-2} 
|\nabla (\D \bfu) |^2 \, d \bfx\\
&\quad= \int_{\Omega} (p-2) |\D \bfu|_F^{p-4} |\D \bfu: \D (\nabla\bfu)|^2 \, d \bfx + \int_{\Omega} |\D \bfu|_F^{p-2} 
|\nabla (\D \bfu) |^2 \, d \bfx.
\end{split}
\end{equation}
Exploiting again \eqref{deri} with $n=\frac{p}{2}$, we observe that
$$
\int_{\Omega} |\nabla |\D \bfu|_F^{\frac{p}{2}}|^2 \, d \bfx = \left( \frac{p}{2}\right)^2 \int_{\Omega} |\D \bfu|_F^{p-4} |\D \bfu : \D (\nabla\bfu))|^2 \, d\bfx.
$$
As a consequence, it follows for $p=\frac{11}{5}$ that
\begin{align*}
\int_{\Omega} \nabla \cdot \left(|\D \bfu|_F^{\frac15} \D \bfu \right) \cdot \Delta \bfu \, d \bfx &\geq \frac15. \left( \frac{10}{11}\right)^2 \int_{\Omega} |\nabla |\D \bfu|_F^{\frac{11}{10}}|^2 \, d \bfx \\
&= \frac{20}{121} \left\| |\D \bfu|_F^\frac{11}{10}\right\|_{H^1}^2 - \frac{20}{121}
\left\| |\D \bfu|_F^{\frac{11}{10}}\right\|^2\\
&\geq \frac18
\left\| |\D \bfu|_F^\frac{11}{10}\right\|_{H^1}^2 - \frac16
\left\| \D \bfu \right\|_{L^\frac{11}{5}}^\frac{11}{5}.
\end{align*}
Using the embedding $H^1(\Omega)\hookrightarrow L^6(\Omega)$ and the Korn inequality,  we infer that
\begin{align*}
\int_{\Omega} \nabla \cdot \left(|\D \bfu|_F^{\frac15} \D \bfu \right) \cdot \Delta \bfu \, d \bfx
&\geq \frac18 \frac{1}{c_S^2} \left\| |\D \bfu|_F^\frac{11}{10}\right\|_{L^6}^2 - \frac16
\left\| \D \bfu \right\|_{L^p}^p\\
&\geq \frac18 \frac{C^\frac{11}{5}}{c_S^2} \left\| \D \bfu\right\|_{L^{\frac{33}{5}}}^\frac{11}{5} - \frac{1}{6}\left\| \D \bfu \right\|_{L^\frac{11}{5}}^\frac{11}{5}\\
&\geq \frac18 \frac{C^\frac{11}{5}}{c_S^2 \, c_K^\frac{11}{5}} \left\| \nabla \bfu\right\|_{L^{\frac{33}{5}}}^\frac{11}{5} - \frac16 \left\| \D \bfu \right\|_{L^\frac{11}{5}}^\frac{11}{5}.
\end{align*} 
In order to handle the convective term, we observe that 
\begin{equation}
\begin{split} 
\int_{\Omega} (\bfu\cdot \nabla) \bfu \cdot \Delta \bfu \, d \bfx
&= \int_{\Omega} \bfu_j \partial_j \bfu_i \partial_{kk} \bfu_i \, d \bfx\\
&=-\int_{\Omega} \partial_k \bfu_j \partial_j \bfu_i \partial_k \bfu_i \, d \bfx - \int_{\Omega} \bfu_j \partial_j \partial_k \bfu_i \partial_k \bfu_i \, d \bfx\\
&= -\int_{\Omega} \partial_k \bfu_j \partial_j \bfu_i \partial_k \bfu_i \, d \bfx - \underbrace{\int_{\Omega} \bfu_j \partial_j \left( \frac12 \partial_k \bfu_i \partial_k \bfu_i  \right) \, d \bfx}_{=0}\leq \| \nabla \bfu \|_{L^3}^3.
\end{split}
\end{equation}
Thus, collecting the above terms together, we find the differential inequality
\begin{equation}
\label{S-di-1}
\begin{split}
&\frac12 \ddt \| \nabla \bfu\|^2 + \nu_0\|\Delta \bfu\|^2 + \frac{\nu_1 \widetilde{C}}{4}
 \left\| \nabla \bfu\right\|_{L^{\frac{33}{5}}}^\frac{11}{5} 
\leq \| \nabla \bfu \|_{L^3}^3 + \frac{\nu_1}{3} \left\| \D \bfu \right\|_{L^\frac{11}{5}}^\frac{11}{5} - \int_{\Omega} \bff \cdot \Delta \bfu \, d \bfx.
 \end{split}
\end{equation}
Here, we have set $\widetilde{C}=\frac{C^\frac{11}{5}}{c_S^2 \, c_K^\frac{11}{5}}$, which depends only on $\Omega$ and the value $p=\frac{11}{5}$.
We now proceed with the estimate of the terms on the right-hand side of \eqref{S-di-1}. We exploit the splitting method devised in \cite{MNRR} for the $L^3$-norm of $\nabla \bfu$ which follows from the Lebesgue interpolation. We recall that for $p \in [2, 3]$
$$
\| \nabla \bfu\|_{L^3}\leq \| \nabla \bfu\|_{L^p}^\frac{p-1}{2} \| \nabla \bfu\|_{L^{3p}}^\frac{3-p}{2},
\quad
\| \nabla \bfu\|_{L^3}\leq \| \nabla \bfu\|_{L^2}^\frac{2p-2}{3p-2} \| \nabla \bfu\|_{L^{3p}}^\frac{p}{3p-2}.
$$
For $\alpha \in (0,1)$, which will be chosen later, exploiting the above interpolation inequalities, we obtain
\begin{equation}
\begin{split}
\| \nabla \bfu\|_{L^3}^3
&\leq \| \nabla \bfu \|_{L^3}^{3\alpha} \| \nabla \bfu\|_{L^3}^{3(1-\alpha)} \\
& \leq \| \nabla \bfu \|_{L^p}^{3\alpha \frac{p-1}{2}} 
\| \nabla \bfu\|_{L^{3p}}^{3\alpha \frac{3-p}{2}}
\| \nabla \bfu\|_{L^2}^{3 (1-\alpha) \frac{2p-2}{3p-2}}
\| \nabla \bfu\|_{L^{3p}}^{3(1-\alpha)\frac{p}{3p-2}}\\
& \leq \| \nabla \bfu \|_{L^p}^{3\alpha \frac{p-1}{2}} 
\| \nabla \bfu\|_{L^2}^{3 (1-\alpha) \frac{2p-2}{3p-2}}
\| \nabla \bfu\|_{L^{3p}}^{3\alpha \frac{3-p}{2}+3(1-\alpha)\frac{p}{3p-2}}.
\end{split}
\end{equation}
In particular, for $p=\frac{11}{5}$, we have
$$
\| \nabla \bfu\|_{L^3}^3
\leq \| \nabla \bfu \|_{L^\frac{11}{5}}^{\frac{9}{5} \alpha} 
\| \nabla \bfu\|_{L^2}^{\frac{36}{23} (1-\alpha)} 
\| \nabla \bfu\|_{L^{\frac{33}{5}}}^{\frac{33}{23}-\alpha \frac{27}{115}}.
$$
Setting
$$
\alpha= \frac{22}{45}, \quad s=\frac{5}{3},\quad s'= \frac{5}{2},
$$
and using the Young inequality, it follows that for any $\varepsilon>0$ 
\begin{equation}
\begin{split}
\| \nabla \bfu\|_{L^3}^3 
&\leq 
\| \nabla \bfu \|_{L^\frac{11}{5}}^{\frac{22}{25}} 
\| \nabla \bfu\|_{L^2}^{\frac{4}{5}} 
\| \nabla \bfu\|_{L^{\frac{33}{5}}}^{\frac{33}{25}}\\
& \leq \frac{3\varepsilon}{5} \| \nabla \bfu\|_{L^{\frac{33}{5}}}^\frac{11}{5}
+ \frac{2}{5 \varepsilon^{\frac{3}{2}}} \| \nabla \bfu\|_{L^\frac{11}{5}}^\frac{11}{5} \| \nabla \bfu\|_{L^2}^2.
\end{split}
\end{equation}
Choosing $\varepsilon= \frac{5}{24} \nu_1 \widetilde{C}$, we are led to
\begin{equation}
\label{Dv-L3}
\| \nabla \bfu\|_{L^3}^3 
\leq 
\frac{\nu_1 \widetilde{C}}{8}
 \left\| \nabla \bfu\right\|_{L^{\frac{33}{5}}}^\frac{11}{5}
+ \frac25 \left( \frac{24}{5\nu_1 \widetilde{C}} \right)^\frac32 \| \nabla \bfu\|_{L^\frac{11}{5}}^\frac{11}{5} \| \nabla \bfu\|_{L^2}^2.
\end{equation}
Also, we have
\begin{equation}
\label{fDv}
- \int_{\Omega} \bff \cdot \Delta \bfu \, d \bfx 
\leq \frac{\nu_0}{2} \| \Delta \bfu\|^2 
+ \frac{1}{2\nu_0} \| \bff\|^2.
\end{equation}
Combining \eqref{S-di-1} with \eqref{Dv-L3} and \eqref{fDv}, we end up with 
\begin{equation}
\label{S-di-2}
\begin{split}
&\frac12 \ddt \| \nabla \bfu\|^2 + \frac{\nu_0}{2}\|\Delta \bfu\|^2 +  \frac{\nu_1 \widetilde{C}}{8}
 \left\| \nabla \bfu\right\|_{L^{\frac{33}{5}}}^\frac{11}{5} \\
&\leq \frac25 \left( \frac{24}{5\nu_1 \widetilde{C}} \right)^\frac32 \| \nabla \bfu\|_{L^\frac{11}{5}}^\frac{11}{5} \| \nabla \bfu\|_{L^2}^2 + \frac{\nu_1}{3} \left\| \D \bfu \right\|_{L^\frac{11}{5}}^\frac{11}{5}+ \frac{1}{2 \nu_0} \| \bff\|^2,
 \end{split}
\end{equation}
for almost any $t \in (\overline{t},\infty)$.
We rewrite the above inequality as
\begin{equation}
\label{S-di-3a}
\begin{split}
&\ddt \| \nabla \bfu\|^2 + \nu_0\|\Delta \bfu\|^2 +  K_1
 \left\| \nabla \bfu\right\|_{L^{\frac{33}{5}}}^\frac{11}{5}\leq K_2 \| \nabla \bfu\|_{L^\frac{11}{5}}^\frac{11}{5} \| \nabla \bfu\|_{L^2}^2 + K_3 \left\| \nabla \bfu \right\|_{L^\frac{11}{5}}^\frac{11}{5}+ \frac{1}{\nu_0} \| \bff\|^2,
 \end{split}
\end{equation}
having set
\begin{equation}
\label{K13}
K_1= \frac{\nu_1 \widetilde{C}}{4},
\quad 
K_2=\frac25 \left( \frac{24}{5\nu_1 \widetilde{C}} \right)^\frac32, 
\quad 
K_3= \frac{2 \nu_1 C}{3}.
\end{equation}
In particular, we have
\begin{equation}
\label{S-di-3b}
\begin{split}
&\ddt \| \nabla \bfu\|^2 \leq K_2 \| \nabla \bfu\|_{L^\frac{11}{5}}^\frac{11}{5} \| \nabla \bfu\|_{L^2}^2 + K_3 \left\| \nabla \bfu \right\|_{L^\frac{11}{5}}^\frac{11}{5}+ \frac{1}{\nu_0} \| \bff\|^2.
 \end{split}
\end{equation}
In light of \eqref{W1pBound-2}, for any $t\geq t_0$ and $r=(\nu_0 \lambda_1)^{-1}$ we infer that 
\begin{equation}
\label{R12}
\int_{t}^{t+r} 
\| \nabla \bfu(\tau)\|^2 \, d \tau \leq
8 \frac{\nu_0 G^2}{\lambda_1^\frac12}=: R_1,
\quad 
\int_{t}^{t+r} 
\| \nabla \bfu(\tau)\|_{L^\frac{11}{5}}^\frac{11}{5} \, d \tau \leq
4 c_K^\frac{11}{5} \frac{\nu_0^2 G^2}{\nu_1 \lambda_1^\frac12}=: R_2.
\end{equation}
By exploiting Lemma \ref{unif-gronw}, we find
\begin{equation}
\label{R3}
\| \nabla \bfu(t)\|^2 \leq
\left( \nu_0 \lambda_1 R_1 + K_3 R_2+ \nu_0^2 \lambda_1^\frac12 G^2 \right) \mathrm{e}^{K_2 R_2}=:R_3, \quad \forall \, t\geq \overline{t}+r=t_1.
\end{equation}
As an immediate consequence, integrating \eqref{S-di-2} from $t$ to $t+r$, where $t\geq t_1$, we obtain
\begin{equation}
\label{335}
\int_t^{t+r} \|\nabla \bfu(\tau) \|_{L^\frac{33}{5}}^\frac{11}{5} \, d \tau\leq \frac{1}{K_1} \left( R_3+K_2 R_2 R_3 +K_3 R_2 + \nu_0^2 \lambda_1^\frac12 G^2  \right).
\end{equation}
\end{proof}

Next, we state the following result concerning the existence of solutions to the data assimilation algorithm given by \eqref{Lady-DA} in the case $p=\frac{11}{5}$. This is a consequence of the results obtained in \cites{MR2005,MNRR}.

\begin{thm}[\textbf{Existence of weak and strong solutions for data assimilation problem}]
\label{DA115}
Assume that $p=\frac{11}{5}$ and $\bff \in \dot{L}^2(\Omega)$. Let $\bfu$ be a weak solution of \eqref{Lady} with periodic boundary conditions given by Theorem \ref{115}. Then, we have the following:
\begin{itemize}
    \item[1.] If $\bfv_0\in \dot{L}_\sigma^2(\Omega)$, there exists a weak solution $\bfv$ to \eqref{Lady-DA}  satisfying
    \begin{equation}
     \label{reg-weak-v115b}
\bfv \in \mathcal{C}([0,T]; \dot{L}_\sigma^2(\Omega)) \cap L^\frac{11}{5} (0 , T;  W_\sigma^{1,\frac{11}{5}}(\Omega)), \quad \partial_t \bfv \in L^{\frac{11}{6}}(0,T;(W_\sigma^{1,\frac{11}{5}}(\Omega))'), \quad 
\forall \, T \geq 0,
\end{equation}
and 
\begin{equation}
\label{weak-prob-v115}
\lb \partial_t \bfv, \bfw \rb + ( (\bfv \cdot \nabla) \bfv, \bfw)   + ( \mathbf{T}(\D \bfv), \nabla \bfw)= (\bff, \bfw)-\mu (I_h(\bfv-\bfu),\bfw), \quad \forall \, \bfw \in \dot{W}_\sigma^{1,\frac{11}{5}}(\Omega),
\end{equation}
for almost all $t\in [0,T]$.
\medskip

\item[2.] If $\bfv_0\in \dot{H}_\sigma^1(\Omega)$, there exists a unique strong solution $\bfv$ to \eqref{Lady-DA} such that
\begin{equation}
     \label{reg-strong-v115}
\bfv \in \mathcal{C}([0,T]; \dot{H}^1_\sigma(\Omega)) \cap L^2 (0 , T;  H_\sigma^2(\Omega))\cap L^\frac{11}{5}(0,T;W^{1, \frac{33}{5}}(\Omega)), \quad
\forall \, T \geq 0,
\end{equation}
which solves \eqref{Lady-DA} in weak sense as in \eqref{weak-prob-v115}.
\medskip

\item[3.] If $\bfv_0 \in \dot{W}^{1,\frac{11}{5}}(\Omega)$, there exists a unique strong solution $\bfv$ to \eqref{Lady-DA} which satisfies, in addition to \eqref{reg-strong-v115},
\begin{equation}
     \label{reg-weak-v115a}
\bfv \in \mathcal{C}([0,T]; \dot{W}^{1,\frac{11}{5}}_\sigma(\Omega)), \quad \partial_t \bfv \in L^2(0,T;\dot{L}_\sigma^2(\Omega), \quad 
\forall \, T \geq 0.
\end{equation}
In particular, in this case 
$\bfv$ solves \eqref{Lady-DA} in weak sense with $\bfw \in \dot{H}_\sigma^1(\Omega)$.
\end{itemize}
\end{thm}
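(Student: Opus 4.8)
The plan is to regard the data assimilation system \eqref{Lady-DA} as the Ladyzhenskaya model \eqref{Lady} driven by a modified body force together with a bounded, lower-order linear feedback, and then to reproduce the existence/regularity/uniqueness machinery of \cites{MNRR,MR2005} while checking that the two extra terms are absorbed into the a priori estimates. Writing the nudging term as $-\mu I_h(\bfv-\bfu) = -\mu I_h\bfv + \mu I_h\bfu$, the contribution $\mu I_h\bfu$ is a \emph{known} forcing: since $\bfu\in\mathcal{C}([0,T];\dot L^2_\sigma(\Omega))$ by Theorem \ref{115} and $I_h$ is bounded on $L^2$ by \eqref{I_h}, one has $\bff_\mu := \bff + \mu I_h\bfu\in L^2(0,T;\dot L^2(\Omega))$, with norm controlled uniformly through \eqref{L2Bound-2}. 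The remaining term $-\mu I_h\bfv$ is a globally bounded linear operator applied to the unknown, hence a zeroth-order perturbation of the Ladyzhenskaya operator. Each of the three claims then follows from the corresponding argument for \eqref{Lady}.

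For claim 1, I would run the Galerkin scheme of \cite{MNRR}. Testing the approximate equations with $\bfv$ produces, beyond the standard terms, the two contributions $-\mu(I_h\bfv,\bfv)$ and $\mu(I_h\bfu,\bfv)$; by \eqref{I_h} these are bounded by $\mu c_I\|\bfv\|^2$ and $\tfrac{\mu c_I}{2}(\|\bfu\|^2+\|\bfv\|^2)$, so that using the coercivity $(\mathbf{T}(\D\bfv),\D\bfv)\ge 2\nu_0\|\D\bfv\|^2$ from \eqref{T-mon} and Gronwall's lemma (Lemma \ref{Gronwall}) one recovers the uniform bounds in $L^\infty(0,T;\dot L^2_\sigma)\cap L^{11/5}(0,T;W^{1,11/5}_\sigma)$ exactly as in \eqref{EnEq2}. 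The bound $\partial_t\bfv\in L^{11/6}(0,T;(W^{1,11/5}_\sigma)')$ follows as in the proof of Theorem \ref{115}, the feedback being estimated in the dual norm simply by $\mu c_I(\|\bfv\|+\|\bfu\|)$. Compactness via Theorem \ref{Aubin-Lions-Simon} together with Minty's monotonicity trick, powered by \eqref{T-mon}, passes to the limit in the nonlinear stress and yields \eqref{reg-weak-v115b}--\eqref{weak-prob-v115}.

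For claims 2 and 3 I would carry out the higher-order estimates of \cite{MNRR}*{Theorem 3.4, Theorem 4.5 and Remark 4.6} and \cite{MR2005} with $\bfv_0\in\dot H^1_\sigma$, respectively $\dot W^{1,11/5}$. Testing with $-\Delta\bfv$, the feedback contributes $\mu(I_h(\bfv-\bfu),\Delta\bfv)$, dominated by \eqref{I_h} and Young's inequality by $\tfrac{\nu_0}{4}\|\Delta\bfv\|^2 + C\mu^2(\|\bfv\|^2+\|\bfu\|^2)$; the first term is absorbed by the viscous dissipation and the remainder is controlled by the weak-solution bounds of claim 1, giving \eqref{reg-strong-v115}. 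Testing with $\partial_t\bfv$ and absorbing $-\mu(I_h(\bfv-\bfu),\partial_t\bfv)\le\tfrac14\|\partial_t\bfv\|^2 + \mu^2 c_I^2\|\bfv-\bfu\|^2$ yields the additional regularity \eqref{reg-weak-v115a}. Uniqueness is clean: the difference of two solutions satisfies a monotone equation in which the feedback appears as $-\mu(I_h(\bfv_1-\bfv_2),\bfv_1-\bfv_2)$, a bounded term closed by Gronwall once \eqref{T-mon} and the Korn inequality are used on the stress and dissipation.

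The genuine point to watch --- rather than a deep obstacle --- is that the reference $\bfu$ is only a (possibly non-unique) weak solution, so no regularity beyond $\mathcal{C}([0,T];\dot L^2_\sigma)\cap L^{11/5}(0,T;W^{1,11/5}_\sigma)$ may be assumed for it on $[0,T]$. This is, however, exactly enough: it guarantees $\mu I_h\bfu\in L^2(0,T;\dot L^2)$, the forcing regularity required by the strong-solution theory, and it renders the lower-order remainders in the $H^1$- and $W^{1,11/5}$-estimates integrable in time. The only structural check is that $-\mu I_h\bfv$ preserves coercivity and monotonicity; since $I_h$ is merely bounded, and not necessarily sign-definite, this term cannot be exploited to gain dissipation, but it is always controllable by $\mu c_I\|\cdot\|^2$, which is all the Gronwall-type arguments require.
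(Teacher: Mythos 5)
The paper offers no proof of this theorem at all: it simply asserts that the result ``is a consequence of the results obtained in \cites{MR2005,MNRR}.'' Your proposal is a correct, fleshed-out version of exactly that strategy --- splitting the nudging term into a known $L^2$ forcing $\mu I_h\bfu$ plus a bounded zeroth-order perturbation $-\mu I_h\bfv$, and checking that both are absorbed as lower-order terms in the energy, $H^1$, and $W^{1,11/5}$ estimates of the cited theory --- so it matches the paper's (implicit) approach and supplies the details the authors chose to leave to the reader.
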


Lastly, we prove the convergence result for $p=\frac{11}{5}$ in the periodic boundary setting.

\begin{thm}\label{thm4}
For $p=\frac{11}{5}$, let $\bfu$ be a weak solution of \eqref{Lady} with periodic boundary conditions given by Theorem \ref{115} and let $\bfv$ be the solution to the data assimilation algorithm given by Theorem \eqref{DA115}. Assume that 
\begin{equation}
\label{mu115}
\mu \geq  \frac{2 \overline{C} \nu_0^\frac{5}{17} \lambda_1^\frac{10}{17}}{K_1^\frac{10}{17}} \left( R_3+K_2 R_2 R_3 +K_3 R_2 + \nu_0^2 \lambda_1^\frac12 G^2 \right)^{\frac{10}{17}}
\end{equation}
where $\overline{C}$ is a constant depending on $\Omega$ and $p$ and $K_1, K_2, K_3, R_2, R_3$ are defined in Theorem \ref{115}, and $h$ small enough such that 
 $$\mu \,  c_0\,  h^2 \leq \nu_0,$$
 where $c_0$ is a dimensionless constant given \eqref{I_h}. Then, we have 
$$
\| \bfu(t) - \bfv(t)\|_{L^2(\Omega)} \rightarrow 0,
$$
at exponential rate, as $t \rightarrow \infty$. 
\end{thm}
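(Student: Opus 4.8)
The plan is to follow the scheme of Theorem~\ref{thm2}, deriving a differential inequality of the form $\ddt\|\bfe\|^2 + \alpha(t)\|\bfe\|^2 \le 0$ for the error $\bfe = \bfu - \bfv$ and then invoking the uniform Gronwall Lemma~\ref{Gronwall2}; the novelty forced by $p=\tfrac{11}{5}<\tfrac52$ is that the convective term must be controlled using the enhanced regularity $\nabla\bfu\in L^{11/5}(\,\cdot\,;L^{33/5})$ supplied by Theorem~\ref{115}, rather than the $L^{11/5}$-in-space bound alone. First I would subtract \eqref{weak-prob-v115} from \eqref{weak-prob}, test the resulting equation with $\bfw=\bfe$, and use $(\bfu\cdot\nabla)\bfu-(\bfv\cdot\nabla)\bfv=(\bfe\cdot\nabla)\bfu+(\bfv\cdot\nabla)\bfe$ together with \cite{MNRR}*{Lemma 2.45} to kill $((\bfv\cdot\nabla)\bfe,\bfe)$. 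The monotonicity \eqref{T-mon} and the Korn inequality (with $p=2$) then yield $\tfrac12\ddt\|\bfe\|^2+\nu_0\|\nabla\bfe\|^2 \le |((\bfe\cdot\nabla)\bfu,\bfe)| - \mu(I_h\bfe,\bfe)$. The nudging term is handled exactly as in \eqref{thm2Eq4}: writing $I_h\bfe = \bfe-(\bfe-I_h\bfe)$ and applying \eqref{I_h} together with the smallness condition on $h$ absorbs a further $\tfrac{\nu_0}{2}\|\nabla\bfe\|^2$ into the dissipation and leaves $-\tfrac{\mu}{2}\|\bfe\|^2$.

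The heart of the argument is the convective term. The estimate used for $p\ge\tfrac52$ produces a factor $\|\nabla\bfu\|_{L^p}^{2p/(2p-3)}$, whose exponent exceeds $p$ when $p<\tfrac52$, so $\nabla\bfu\in L^p(\,\cdot\,;L^p)$ no longer guarantees integrability in time. Instead I would bound $|((\bfe\cdot\nabla)\bfu,\bfe)|\le\|\bfe\|_{L^{33/14}}^2\,\|\nabla\bfu\|_{L^{33/5}}$ by H\"older (since $\tfrac{14}{33}+\tfrac{14}{33}+\tfrac{5}{33}=1$), interpolate $\|\bfe\|_{L^{33/14}}\le\|\bfe\|^{17/22}\|\bfe\|_{L^6}^{5/22}$, and use the Sobolev embedding $\|\bfe\|_{L^6}\le c_S\|\nabla\bfe\|$ (valid for zero-average $\bfe$ on the torus). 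This gives $c_S^{5/11}\|\bfe\|^{17/11}\|\nabla\bfe\|^{5/11}\|\nabla\bfu\|_{L^{33/5}}$, to which the Young inequality with exponents $(\tfrac{22}{5},\tfrac{22}{17})$ applies, absorbing $\tfrac{\nu_0}{2}\|\nabla\bfe\|^2$ and leaving a term $\overline{C}\,\nu_0^{-5/17}\|\nabla\bfu\|_{L^{33/5}}^{22/17}\|\bfe\|^2$. The exponents are arranged precisely so that $\tfrac{17}{11}\cdot\tfrac{22}{17}=2$, producing exactly $\|\bfe\|^2$.

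Collecting the estimates yields $\ddt\|\bfe\|^2+\alpha(t)\|\bfe\|^2\le0$ with $\alpha(t)=\mu-2\overline{C}\nu_0^{-5/17}\|\nabla\bfu(t)\|_{L^{33/5}}^{22/17}$. To verify the hypothesis of Lemma~\ref{Gronwall2} I would integrate $\alpha$ over $[t,t+r]$ with $r=(\nu_0\lambda_1)^{-1}$ and control the remaining spatial norm in time by H\"older: since $\tfrac{22/17}{11/5}=\tfrac{10}{17}<1$, one has $\int_t^{t+r}\|\nabla\bfu\|_{L^{33/5}}^{22/17}\,ds\le r^{7/17}\big(\int_t^{t+r}\|\nabla\bfu\|_{L^{33/5}}^{11/5}\,ds\big)^{10/17}$, and the bracket is bounded for $t\ge t_1$ by \eqref{335}. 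Substituting $r=(\nu_0\lambda_1)^{-1}$ turns the requirement $\mu r>2\overline{C}\nu_0^{-5/17}r^{7/17}(\cdots)^{10/17}$ into exactly the threshold \eqref{mu115}, so that $\int_t^{t+r}\alpha\ge\beta>0$ uniformly in $t\ge t_1$, and Lemma~\ref{Gronwall2} delivers the exponential decay of $\|\bfe\|$. The main obstacle is the convective estimate: one must find the unique H\"older/interpolation/Young bookkeeping that simultaneously (i) keeps a full $\|\nabla\bfe\|^2$ to absorb into dissipation, (ii) produces exactly $\|\bfe\|^2$, and (iii) leaves a power of $\|\nabla\bfu\|_{L^{33/5}}$ strictly below $\tfrac{11}{5}$ so that the time average \eqref{335} can close the Gronwall argument. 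A secondary point requiring care is that all manipulations are licensed only for $t\ge t_1$, where Theorem~\ref{115} upgrades the (possibly non-unique) weak reference solution to a strong one; since the claim is asymptotic, restricting to this regime is harmless.
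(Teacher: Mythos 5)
Your proposal is correct and follows essentially the same route as the paper: the identical H\"older/interpolation/Young bookkeeping on the convective term (exponents $\tfrac{33}{14}$, $\tfrac{17}{22}$, $\tfrac{22}{17}$) leading to $\ddt\|\bfe\|^2+\bigl(\mu-\overline{C}\nu_0^{-5/17}\|\nabla\bfu\|_{L^{33/5}}^{22/17}\bigr)\|\bfe\|^2\le 0$, followed by the time-H\"older step with exponent $\tfrac{10}{17}$, the bound \eqref{335} on $[t,t+r]$ for $t\ge t_1$, and Lemma \ref{Gronwall2}. No gaps.
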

\begin{proof}
Proceeding as in the proof of Theorem \ref{thm2},  we have
 \begin{equation} \label{thm2Eq5-2}
\frac{1}{2}  \ddt \|\bfe\|^2 +  \frac{\nu_0}{2} \|\nabla \bfe\|^2  \leq  | \left( \left(\bfe \cdot \nabla\right) \bfu , \bfe\right)  |-  \frac{\mu}{2} \|\bfe\|^2.
\end{equation}
Arguing differently than \eqref{thm2Eq6}, we find
\begin{equation}\label{thm2Eq6-2}
\begin{split}
 | \left( \left(\bfe \cdot \nabla\right) \bfu , \bfe\right) & | \leq   \| \bfe^2\|_{L^{\frac{33}{28}}}  \, \|\nabla \bfu\|_{L^\frac{33}{5}}
 = \| \bfe\|^2_{L^{\frac{33}{14}}}\, \|\nabla \bfu\|_{L^\frac{33}{5}}  
 \leq  \|\bfe\|^{\frac{17}{11}}\, \|\bfe\|_{L^6}^{\frac{5}{11}} \,  \|\nabla \bfu\|_{L^\frac{33}{5}} \\
 & \leq  c_S^\frac{5}{11} \|\bfe\|^{\frac{17}{11}}\, \| \nabla \bfe\|^{\frac{5}{11}} \, \|\nabla \bfu\|_{L^\frac{33}{5}}  
 \leq \frac{\nu_0}{2}  \| \nabla \bfe\|^2 
 + c_S^\frac{10}{17} \left( \frac{2}{\nu_0}\right)^\frac{5}{17}  \,  \|\nabla \bfu\|_{L^\frac{33}{5}}^\frac{22}{17} \, \|\bfe\|^2.
\end{split}
\end{equation}
Inserting \eqref{thm2Eq6-2} in \eqref{thm2Eq5-2}, we arrive at
\begin{equation}\label{thm2Eq7-2}
\ddt \|\bfe\|^2 + \left( \mu -  \frac{\overline{C}}{\nu_0^\frac{5}{17}}  \,  \|\nabla \bfu\|_{L^\frac{33}{5}}^\frac{22}{17}\right)\,  \|\bfe\|^2 \leq 0,
\end{equation}
for some constant $\overline{C}$ depending only on $\Omega$ and the value $p=\frac{11}{5}$.
Aiming to use Lemma \ref{Gronwall2}, let us set 
$$
\alpha(t) =  \left( \mu -  \frac{\overline{C}}{\nu_0^\frac{5}{17}}  \,  \|\nabla \bfu\|_{L^\frac{33}{5}}^\frac{22}{17}\right).
$$
By H\"older's inequality and \eqref{335}, we obtain 
\begin{equation*}
\begin{split}
\int_t^{t+r}\, \alpha(s)\, ds & = \mu r  - \frac{\overline{C}}{\nu_0^\frac{5}{17}}  \, \int_t^{t+r}\,  \|\nabla \bfu (s)\|_{L^\frac{33}{5}}^{\frac{22}{17}} ds\\
& \geq \frac{\mu}{\nu_0 \lambda_1}  - \frac{\overline{C}}{\nu_0^\frac{5}{17}}
\,  \left(\int_t^{t+r} \|\nabla \bfu (s)\|_{L^\frac{33}{5}}^{\frac{11}{5}} \, ds\right)^{\frac{10}{17}} \left(\int_t^{t+r} 1 \, ds\right)^{\frac{7}{17}}\\
& \geq \frac{\mu}{\nu_0 \lambda_1}  - \frac{\overline{C}}{\nu_0^\frac{12}{17} \lambda_1^\frac{7}{17}}
\, \frac{1}{K_1^\frac{10}{17}} \left( R_3+K_2 R_2 R_3 +K_3 R_2 + \nu_0^2 \lambda_1^\frac12 G^2 \right)^{\frac{10}{17}}.
\end{split}
\end{equation*}
Notice that the second term on the right-hand side of the above inequality is independent of $\mu$. In particular, in light of the assumption \eqref{mu115} we immediately deduce that
$$ 
\int_t^{t+r}\, \alpha(s)\, ds >0, \quad \forall \, t\geq t_1.
$$
Therefore,  we conclude from Lemma \ref{Gronwall2} that $\|\bfe\| = \| \bfu - \bfv\| \rightarrow 0$ exponentially fast as $t \rightarrow \infty$.
\end{proof}

\begin{remark}[2D case]
The condition \eqref{mu115} for the nudging parameter $\mu$ can be enhanced in 2D. Indeed, recalling that $\int_{\Omega} (\bfu\cdot \nabla) \bfu \cdot \Delta \bfu \, d \bfx= 0$, \eqref{S-di-3a} is replaced by 
\begin{equation}
\begin{split}
&\ddt \| \nabla \bfu\|^2 + \nu_0\|\Delta \bfu\|^2 +  2 K_1
 \left\| \nabla \bfu\right\|_{L^{\frac{33}{5}}}^\frac{11}{5}\leq 
  K_3 \left\| \nabla \bfu \right\|_{L^\frac{11}{5}}^\frac{11}{5}+ \frac{1}{\nu_0} \| \bff\|^2.
 \end{split}
\end{equation}
Then, arguing as in the proof of Theorem \ref{115}, it follows that
\begin{equation}
\label{R3-star}
\| \nabla \bfu(t)\|^2 \leq
\left( \nu_0 \lambda_1 R_1 + K_3 R_2+ \nu_0^2 \lambda_1^\frac12 G^2 \right) =:R_3^\star, \quad \forall \, t\geq \overline{t}+r=t_1,
\end{equation}
and 
\begin{equation}
\int_t^{t+r} \|\nabla \bfu(\tau) \|_{L^\frac{33}{5}}^\frac{11}{5} \, d \tau\leq \frac{1}{2 K_1} \left( R_3^\star+K_3 R_2 + \nu_0^2 \lambda_1^\frac12 G^2  \right), \quad \forall \, t\geq t_1.
\end{equation}
As a direct consequence, \eqref{mu115} becomes
\begin{equation}
\label{mu115-2D}
\mu \geq  \frac{2 \overline{C} \nu_0^\frac{5}{17} \lambda_1^\frac{10}{17}}{(2\, K_1)^\frac{10}{17}} \left( R_3^\star+K_3 R_2 + \nu_0^2 \lambda_1^\frac12 G^2 \right)^{\frac{10}{17}}.
\end{equation}
Furthermore, the analysis herein presented can be extended for any $p>2$ in \eqref{T-nonlinear}.
\end{remark}

\section{Computational Results}

We demonstrate the effectiveness of nudging for both two and three-dimensional Ladyzhenskaya models with fully periodic boundary conditions  in  $\Omega=[0,2\pi]^d$, $d=2,3$. {This is first done for the case $p=3$,  the Smagorinsky model, which is often used in Large Eddy Simulation (LES) of turbulent flow \cites{BIL06, J04}. We then vary $p$ in the three-dimensional case, and test nudging with only the horizontal components of velocity.} For both cases, the parameter $\nu_1$ is chosen from dimensional considerations to be 
\begin{align}\label{p-nu1rel}
\nu_1=\frac12(C_s\delta)^2\nu_0^{3-p}\ ,\quad C_s=0.1\ ,\quad \delta=\frac{2\pi}{N}\ ,
\end{align}
where $N$ is the number of Fourier modes used in each direction for the direct numerical simulation (DNS) of the reference solution.

The initial condition for the reference solution $\ubf(t_0)$ for each data assimilation experiment is chosen so that it faithfully reflects the long term dynamics of the model. This is done by integrating the model starting at $t=0$ with $\ubf(0)=0$ until some time $t=t_0$ when it appears the transient period has passed. { Figure \ref{fig:energy-time} shows the time evolution of the energy $\|\ubf\|_{L^2}^2$ on $[0,t_0]$. By the end of the run, this quantity seems to have reached its statistically stationary state. We assume then that $\ubf(t_0)$ is essentially on the global attractor. 
We start the nudging at time $t=t_0$ by solving the original ($\ubf$) and the nudging ($\vbf$) systems simultaneously with $\vbf(t_0)=0$.}  The computations are done using Dedalus,  an open-source spectral package (see \cite{dedalus2020}). The time stepper is a four-stage third order Runge-Kutta method.

\begin{figure}[!htbp]
         \subcaptionbox{2D Smagorinsky ($512\times 512$)}{\includegraphics{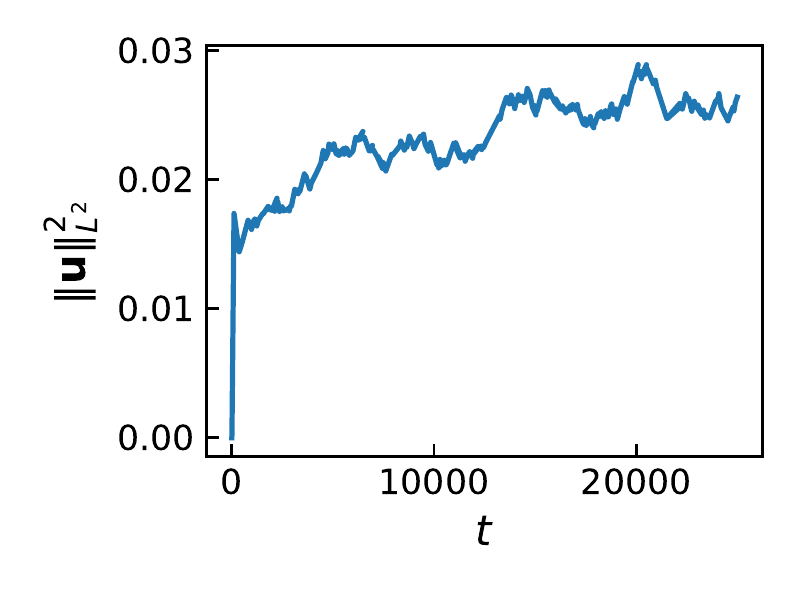}}
         \subcaptionbox{3D Smagorinsky ($256\times 256\times 256$)}{\includegraphics{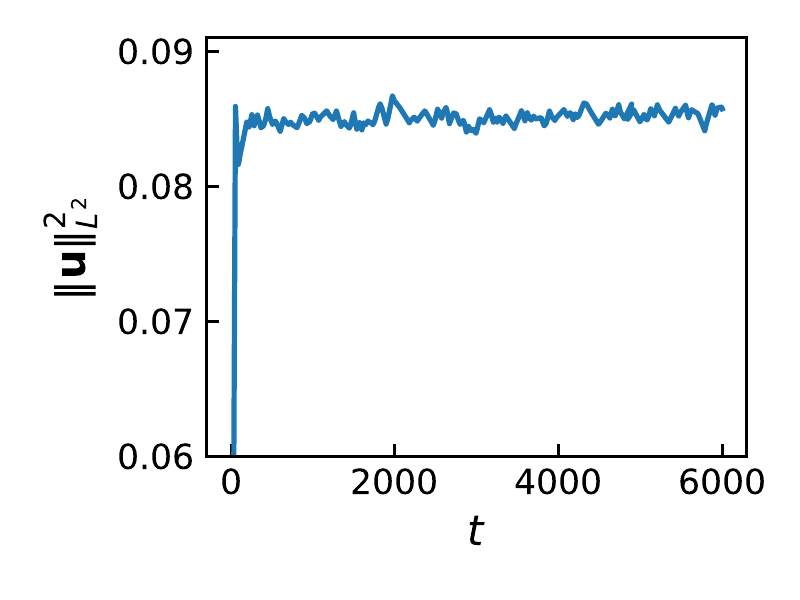}}
         \caption{Evolution of the energy of the reference solution over the transient period.
         }
                 \label{fig:energy-time}
\end{figure}

\subsection{Two-dimensional case}
{In two-dimensions, we take the viscosity to be $\nu_0=10^{-4}$, $\mu=1$, and use a normalized force $\bff_{\mathrm{2D}}$ from \cite{olson2008determining}, so that the Grashof number $G=2.5\times 10^5$.
We demonstrate both the nodal value and Fourier modes interpolant operators. In the nodal value case, we use every $4$th nodal value in each direction so that $h\approx 0.0491$. In the Fourier modes case, we use the projection on the low modes with wave vectors $\mathbf{k}=(k_1,k_2)$ such that $|k_j|\le 32$ and
$
h=\frac{\pi}{32}\approx 0.0982\;.
$
The value of $N$ is fixed at $512$.  While we have not analyzed the nodal interpolation operator in this paper, Figure \ref{2d-da} shows synchronization with the DNS of the reference solution to within machine precision in both the $L^2$ and $H^1$ norms. The same is true for Fourier mode interpolation, with a slower rate due to a larger value of $h$.} Field plots of the velocity components and pressures at several times near the start of nudging  corresponding to Figure \ref{2d-da-nodal} are shown in Figure \ref{fig:2d-da}.

\begin{figure}[!htbp]
         \subcaptionbox{Nodal interpolation\label{2d-da-nodal}}{\includegraphics{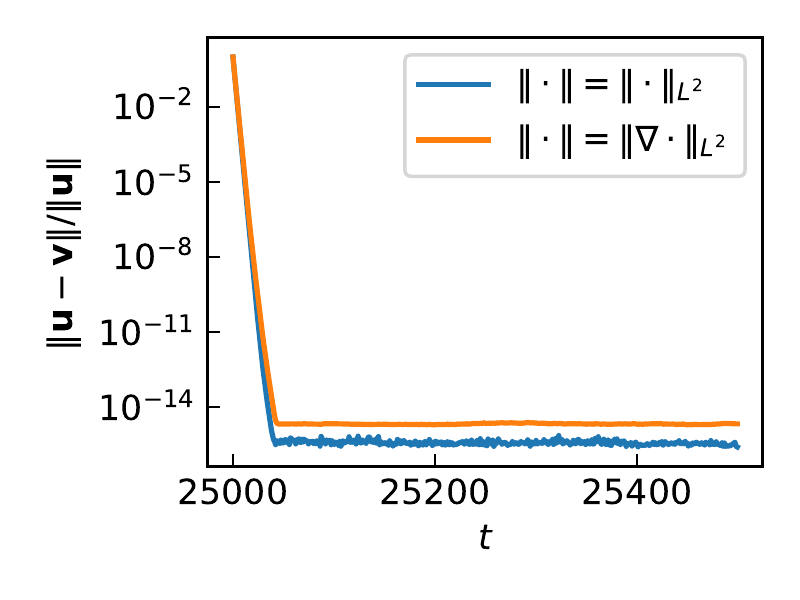}}
         \subcaptionbox{Fourier interpolation }{\includegraphics{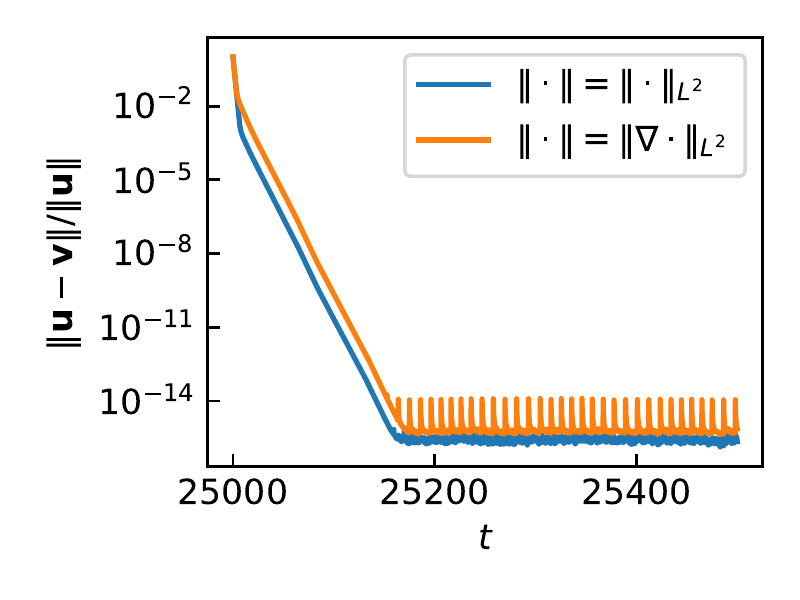}}
         \caption{Convergence of data assimilation for 2D Smagorinsky model for $\mu=1$.}\label{2d-da}
\end{figure}

\subsection{Three-dimensional case}

In the three-dimensional case, we take the function $g:=\nabla\times \bff_{\mathrm{2D}}$ and define a force $\bff_{\mathrm{3D}}=(f_1,f_2,f_3)$
 so that in each wave vector plane, $\bff_{\mathrm{3D}}$ is similar to $\bff_{\mathrm{2D}}$. Specifically, the 
 nonzero Fourier coefficients are:
 \begin{align*}
    \hat{f_1}(k_1,0,k_3)=\frac{ik_3\hat{g}(k_1,k_3)}{k_1^2+k_3^2}\, ,\quad  &\hat{f_1}(k_1,k_2,0)=\frac{ik_2\hat{g}(k_1,k_2)}{k_1^2+k_2^2}\, ,\\
    \hat{f_2}(k_1,k_2,0)=\frac{-ik_1\hat{g}(k_1,k_2)}{k_1^2+k_2^2}\, ,\quad &\hat{f_2}(0,k_2,k_3)=\frac{ik_3\hat{g}(k_2,k_3)}{k_2^2+k_3^2}\, ,\\
    \hat{f_3}(k_1,0,k_3)=\frac{-ik_1\hat{g}(k_1,k_3)}{k_1^2+k_3^2}\, ,\quad  &\hat{f_3}(0,k_2,k_3)=\frac{-ik_2\hat{g}(k_1,k_2)}{k_2^2+k_3^2}\, .
 \end{align*}
In 3D it is the viscosity $\nu_0$ that is adjusted so that the Grashof number remains as $G=2.5\times 10^5$.
We use the Fourier modes interpolation operator $I_h=P_{h(m)}$ for the 3D model, where $P_{h(m)}$ denotes the projection on the low modes with wave vectors $\mathbf{k}=(k_1,k_2,k_3)$ such that $|k_j|\le m$ and
$$
h(m)=\frac{\pi}{m}\;.
$$
{The value of $N$ is fixed at $256$.}

{Figure \ref{fig:3d-da-err} shows the exponential rate of synchronization using different values of nudging parameter $\mu$ and resolution $h$.  For fixed $\mu=10$, as we use fewer number of modes, the convergence is slower, but still exponential. {For $m=8$, slices of solutions at the mid-plane $z=\pi$ near the start of nudging are shown in Figure \ref{fig:3d-da}}. The convergence fails at $m=4$ (not shown).

At the fixed parameter of $m=32$, the convergence rate improves as $\mu$ is increased through $\mu=1$. (see Figure \ref{fig:3d-da-err-B}). At $\mu=1$ and $\mu=5$, the convergence rates are nearly identical, while at $\mu=0.01$, nudging fails to synchronize. This demonstrates a critical value of $\mu$.}

\begin{figure}[!htbp] 
 \subcaptionbox{\label{fig:3d-da-err-A}}
{\includegraphics{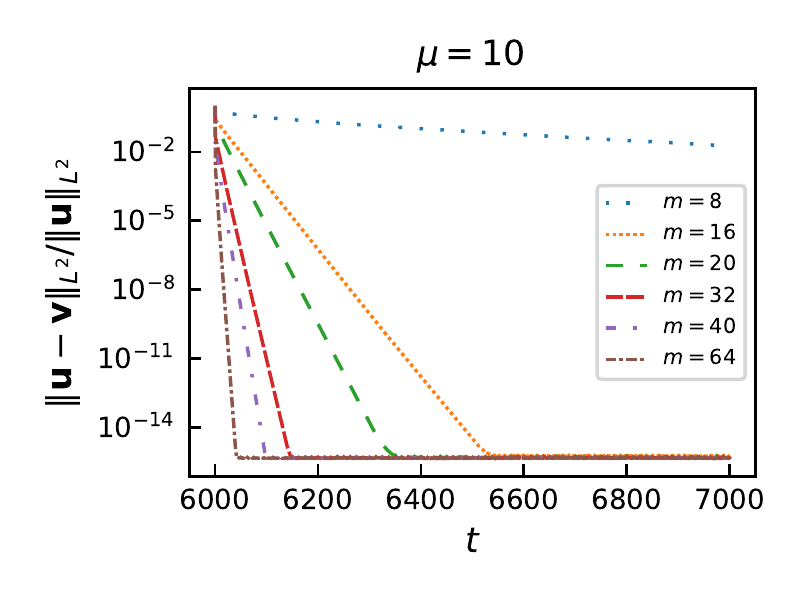}}
 \subcaptionbox{\label{fig:3d-da-err-B}}
{\includegraphics{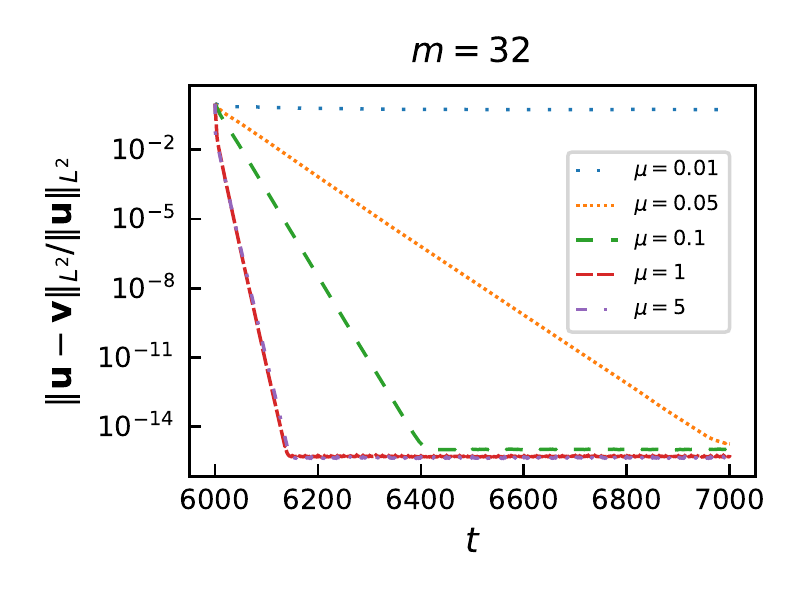}}
\caption{Convergence of data assimilation for the 3D Smagorinsky at different values of the nudging parameter $\mu$ and $h=h(m)$; the left fixes $\mu=10$ and the right fixes $m=32$.}
\label{fig:3d-da-err}
\end{figure}

\begin{figure}[!htbp]
    \centering
    \includegraphics[width=.8\textwidth]{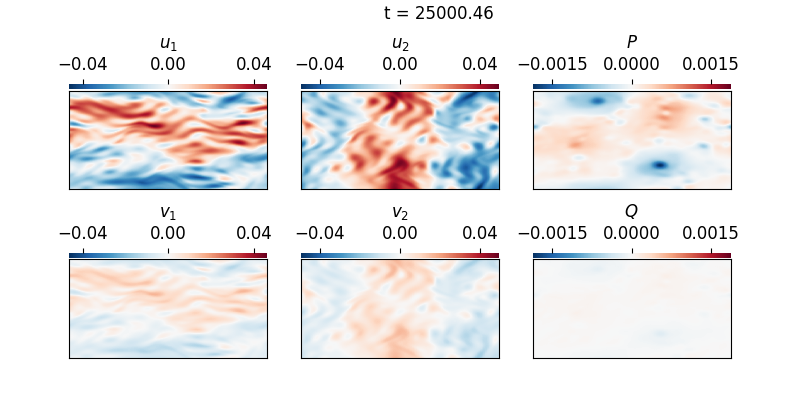}
    \includegraphics[width=.8\textwidth]{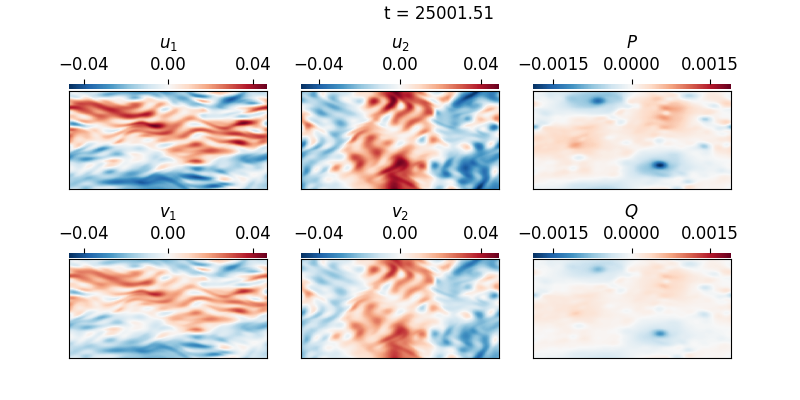}
    \includegraphics[width=.8\textwidth]{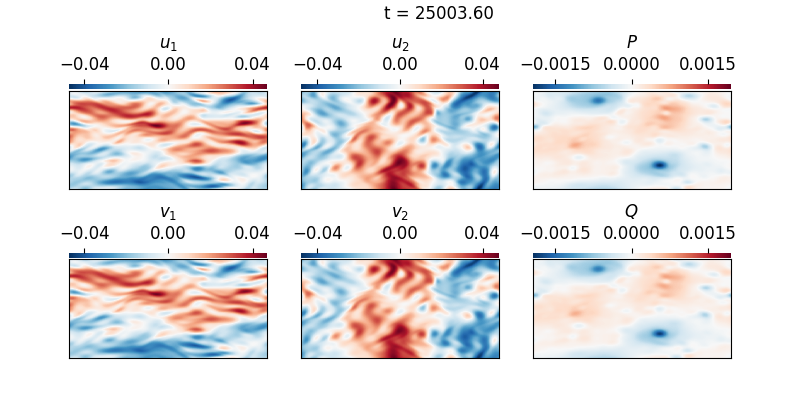}
    \caption{Synchronization of the 2D Smagorinsky model using nodal interpolation, $\mu=1$ and $h\approx 0.0491$; the reference solution $(\ubf,P)$ is denoted as $(u_1,u_2,P)$ and the nudging solution $(\vbf,Q)$ is $(v_1,v_2,Q)$.}
        \label{fig:2d-da}
\end{figure}

\begin{figure}[!htp]
         \centering
         \includegraphics[width=.95\textwidth]{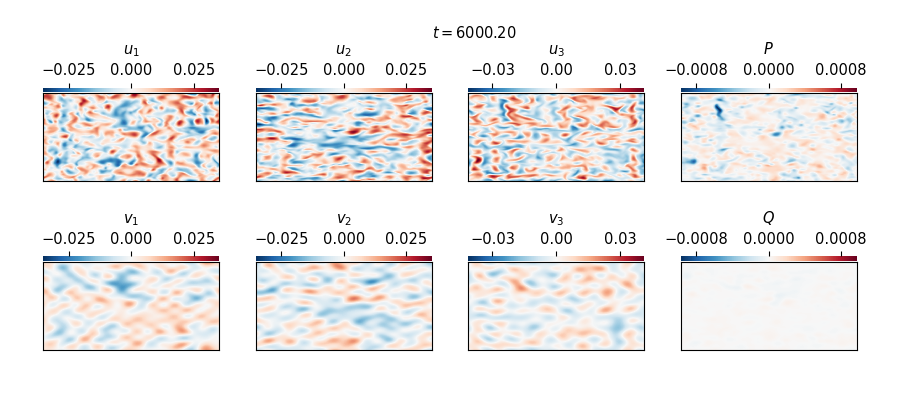}
         \includegraphics[width=.95\textwidth]{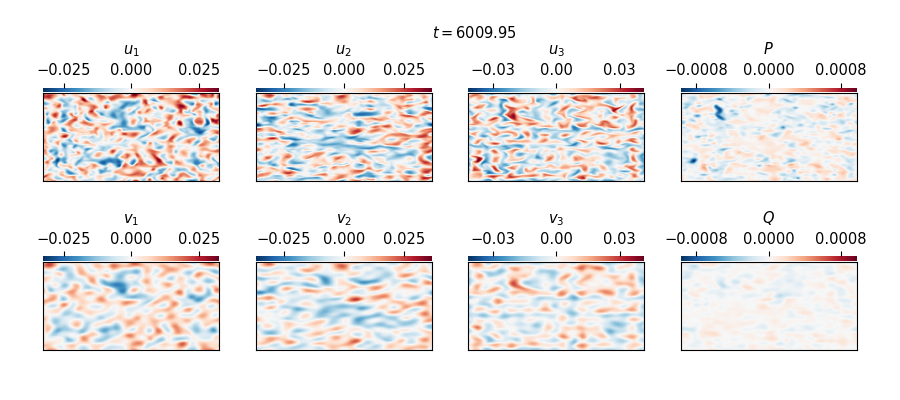}
         \includegraphics[width=.95\textwidth]{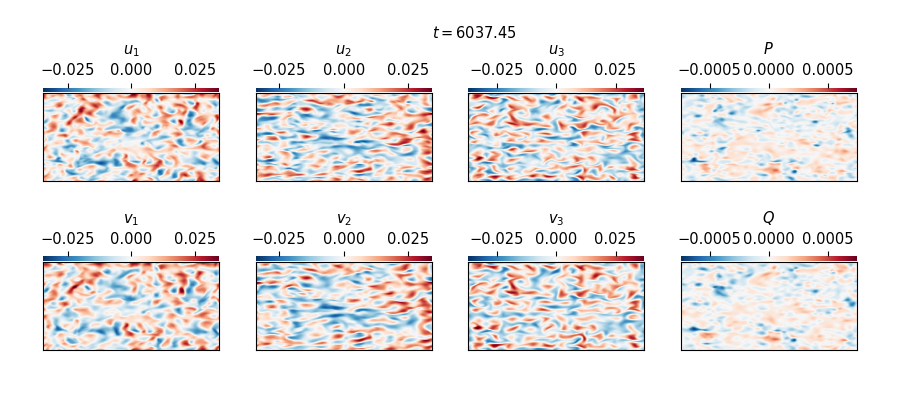}

\caption{Synchronization of the 3D Smagorinsky model using $\mu=10, h=h(8)$. These are the slices in the mid-plane $(0,2\pi)\times(0,2\pi)\times\{z=\pi\}$.}
         \label{fig:3d-da}
\end{figure}
We varied $p$ (along with $\nu_1$ according to \eqref{p-nu1rel}) in the Ladyzhenskaya model using both $\mu=10$ and $\mu=0.1$ (see Figure \ref{fig:3d-lady-p}).  At these values of $\mu$, we detect no discernible difference in the performance of the nudging algorithm for $p$ ranging from $2.2=11/5$ to 3.
\begin{figure}[!htp]
\centering
         \includegraphics{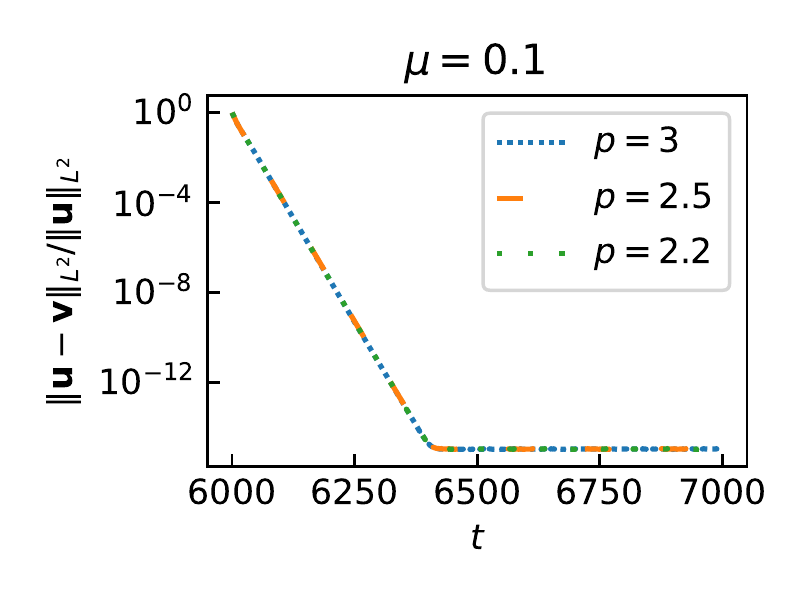}
         \includegraphics{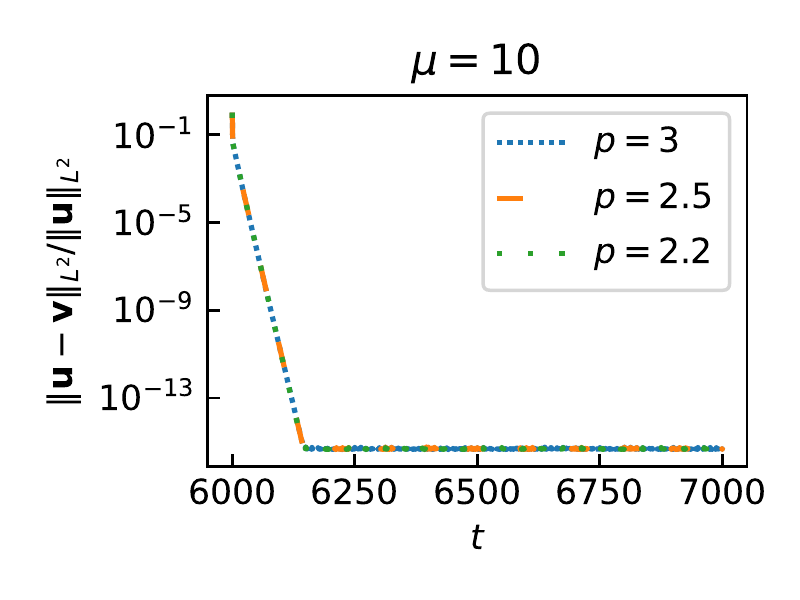}
\caption{Synchronization for the 3D Ladyzhenskaya model using $h=h(32)$ for different values of $p$.}
\label{fig:3d-lady-p}
\end{figure}

Finally, we consider an abridged nudging scheme in which only the horizontal components of velocity play the role of observed data.  This amounts to treating $\mu$ as the vector $(\mu_1, \mu_2, \mu_3)=(10,10,0)$ and nudging the $j^{\text{th}}$ component of velocity with the factor $\mu_j$.  Figure \ref{fig:3d-par-nud} shows rapid initial synchronization, which then slows, particularly for the third component of velocity, which is not nudged. While the error is far from machine precision even after nudging for 1000 time units, the field plots
shown in Figure \ref{fig:3d-da-abr} display similar features at rates that are slower for the third component of velocity and pressure.
\begin{figure}[ht]
\centering
         \includegraphics[width=.45\textwidth]{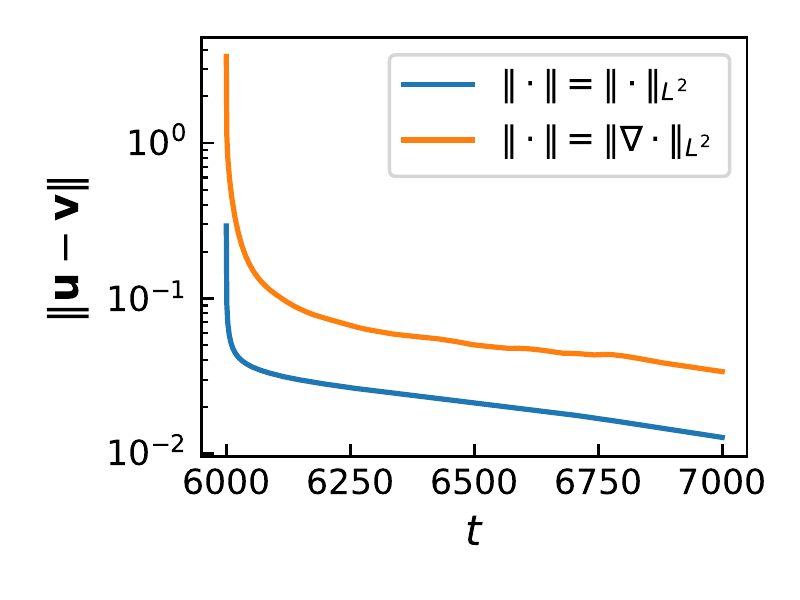}
         \includegraphics[width=.45\textwidth]{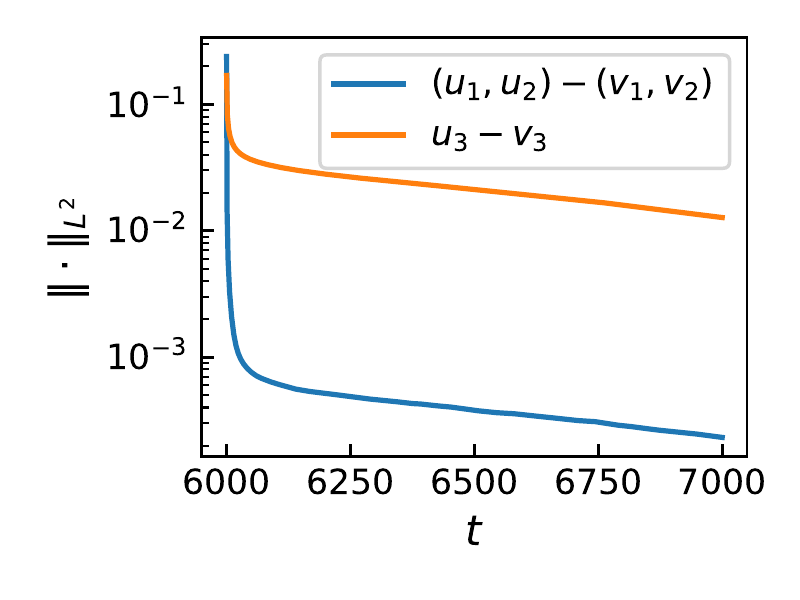}
\caption{Abridged nudging for the 3D Smagorinsky model with $(\mu_{1},\mu_{2},\mu_3)=(10,10,0)$ and $h=h(128)$}
\label{fig:3d-par-nud}
\end{figure}

\begin{figure}[!htp]
         \centering
         \includegraphics[width=.91\textwidth]{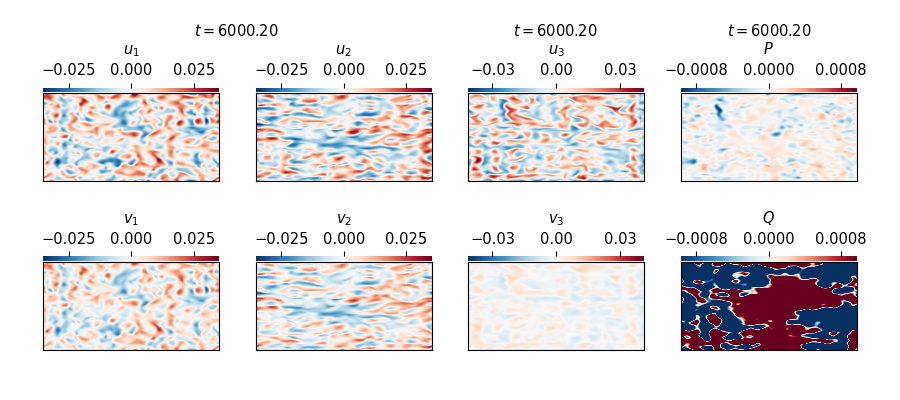}
         \includegraphics[width=.91\textwidth]{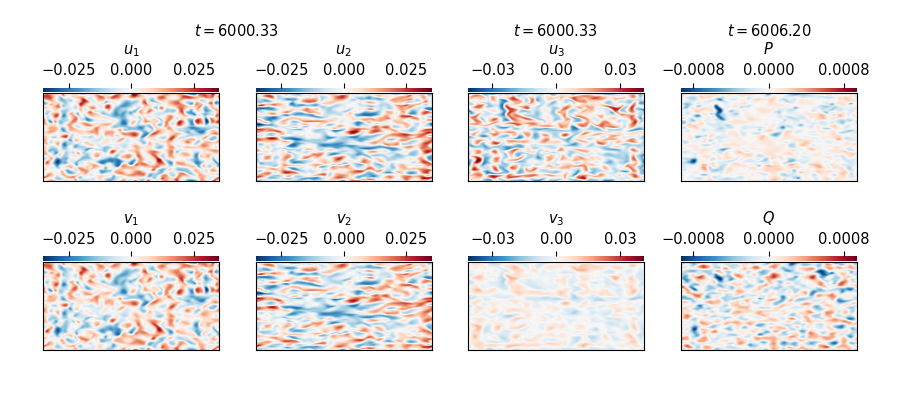}
         \includegraphics[width=.91\textwidth]{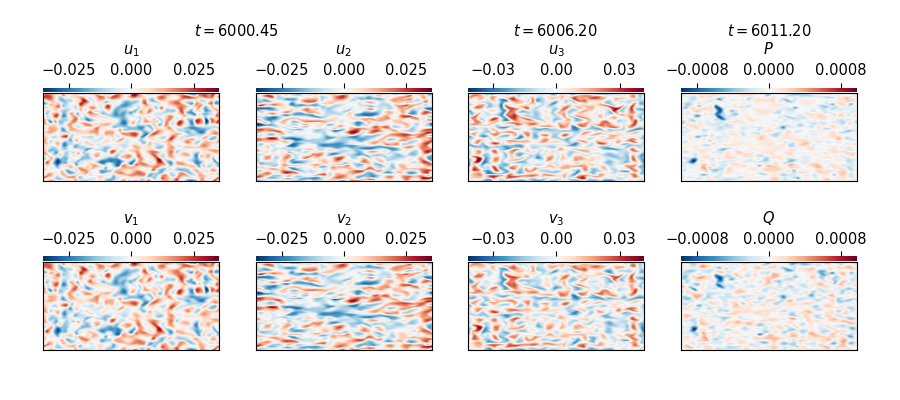}
\caption{Abridged data assimilation for the 3D Smagorinsky model with $(\mu_{1},\mu_{2},\mu_3)=(10,10,0)$ and $h=h(128)$. These are the slices on the mid-plane $(0,2\pi)\times(0,2\pi)\times\{z=\pi\}$.  Note the time progression is different for different components of velocity and pressure.}
         \label{fig:3d-da-abr}
\end{figure}

\clearpage

\section{Acknowledgments}   
The research of M.~Jolly was supported in part by NSF grant DMS-1818754. The authors acknowledge the Indiana University Pervasive Technology Institute (see \cite{stewart2017indiana}) for providing HPC (Big Red 3, Carbonate) and storage resources that have contributed to the research results reported within this paper.

\section{Appendix A}
\label{AppendixA}
In this appendix, we prove the monotonicity  property \eqref{T-mon}.   With $ A = (a_{ij})_{i,j=1}^3, B = (b_{ij})_{i,j=1}^3 \in \mathbb{R}_{sym}^{3 \times 3}$, consider 
\begin{equation}
\mathbf{T}(A) =  2\, \left(\nu_0  + \nu_1 |A|_F^{p-2} \right)\, A,  \quad p\geq 2.
\end{equation}
Motivated by
\begin{equation}\label{Lem1Eq1}
\mathbf{T}_{ij}(A) - \mathbf{T}_{ij}(B) = \int_0^1\frac{\rm{d}}{\rm{d} \tau} \mathbf{T}_{ij} \big(\tau A + (1 - \tau) B\big)  \, d\tau,
\end{equation}
we compute $\frac{\rm{d}}{\rm{d} \tau} \mathbf{T}_{ij}$  as 
\begin{equation}\label{Lem1Eq2}
\begin{split}
\frac{\rm{d}}{\rm{d} \tau} \mathbf{T}_{ij} \left(\tau A + (1 - \tau) B\right) & =  2\,  \frac{\rm{d}}{\rm{d} \tau} \left[  \left(\nu_0 + \nu_1 |\tau A+ (1 - \tau) B |^{p-2}_F\right)  \, \left( \tau A_{ij} + (1 - \tau) B_{ij}\right) \right]\\
& =2\,  \left[ \frac{\rm{d}}{\rm{d} \tau}\left(\nu_0 + \nu_1 |\tau A + (1 - \tau) B |^{p-2}_F \right)\right]  \,  \left( \tau a_{ij} + (1 - \tau) b_{ij}\right)\\
& +2\,   \left(\nu_0 + \nu_1 |\tau A + (1 - \tau) B|^{p-2}_F \right)  \, \left( a_{ij} - b_{ij}\right).
\end{split}
\end{equation}
 Working on the first term on the right-hand side above in more details, we obtain
\begin{equation}\label{Lem1Eq3}
\begin{split}
 \frac{\rm{d}}{\rm{d} \tau}\left(|\tau A+ (1 - \tau) B|^{p-2}_F \right)  & =   \frac{\rm{d}}{\rm{d} \tau} \left[ \sum_{i,j=1}^3 \left( \tau a_{ij} + (1-\tau ) b_{ij} \right)^2 \right]^{\frac{p-2}{2}}\\
 & = \frac{p-2}{2} \, |\tau A + (1 - \tau) B|^{p-4}_F\, \sum_{i,j=1}^3\, 2 \big(  a_{ij} -  b_{ij}\big) \, \big(  \tau a_{ij} + (1-\tau )b_{ij}\big) \\
 & = (p-2) \, |\tau A + (1 - \tau) B|^{p-4}_F\,  \, \sum_{k,l=1}^3\,  \big(  a_{kl} -  b_{kl}\big) \, \big(  \tau a_{kl} + (1-\tau )b_{kl}\big).\\
\end{split} 
\end{equation}
Now with the help of (\ref{Lem1Eq1}) we can write
\begin{equation}\label{Lem1Eq4}
\begin{split}
\mathbf{T}(A) - \mathbf{T}(B) : (A- B)  & = \sum_{i,j =1}^3 \big(\mathbf{T}_{ij}(A) - \mathbf{T}_{ij}(B) \big)  \, \big( a_{ij} -  b_{ij}\big)\\
& =  \sum_{i,j =1}^3 \left(\int_0^1\frac{\rm{d}}{\rm{d} \tau} \mathbf{T}_{ij} \big(\tau A+ (1 - \tau) B \big)  \, d\tau \right)  \, \big( a_{ij} -  b_{ij}\big).
\end{split}
\end{equation}
Plugging \eqref{Lem1Eq2} and \eqref{Lem1Eq3} in \eqref{Lem1Eq4}, we have
\begin{equation*}\label{Lem1Eq5}
\begin{split}
& \mathbf{T}(A) - \mathbf{T}(B) : (A - B)   =  \int_0^1  \sum_{i,j =1}^3 \frac{\rm{d}}{\rm{d} \tau} \mathbf{T}_{ij} \big(\tau A+ (1 - \tau) B \big) \left( a_{ij} -  b_{ij} \right)  \, d\tau \\
&  = 2\,  \int_0^1  \sum_{i,j=1}^3 \big(\nu_0 + \nu_1 |\tau A+ (1 - \tau) B|^{p-2}_F \big)  \, \big( a_{ij} - b_{ij}\big)^2 \, d\tau + 2\,  \int_0^1 \big[ \nu_1\, (p-2) \, |\tau A+ (1 - \tau)  B |^{p-4}_F \\
& \quad \times \sum_{i,j,k,l =1}^3  \big(  \tau a_{ij}+ (1-\tau ) b_{ij}\big)\, \big(  \tau a_{kl}+ (1-\tau ) b_{kl}\big)\, \big(a_{ij} - b_{ij} \big)\,   \big(a_{kl} - b_{kl} \big) \big]\, d\tau. 
\end{split}
\end{equation*}
Since  $\nu_1 |\tau A + (1 - \tau) B|^{p-2}_F \geq 0$, the first integral above can be bounded from below as follows
\begin{equation}
\begin{split}
 \int_0^1  \big[\sum_{i,j=1}^3 \big(\nu_0 + \nu_1 |\tau A+ (1 - \tau) B|^{p-2}_F \big)  \, \big( a_{ij} - b_{ij}\big)^2 \big]\, d\tau & \geq  \nu_0 \sum_{i,j=1}^3  \, \big( a_{ij} - b_{ij}\big)^2 \\
&=  \nu_0 \, |A - B|_F^2.
\end{split}
 \end{equation}
As to the second term, we notice that it is non-negative since
 \begin{equation}
 \begin{split}
&\sum_{i,j,k,l =1}^3  \big(  \tau a_{ij}+ (1-\tau ) b_{ij}\big)\, \big(  \tau a_{kl}+ (1-\tau ) b_{kl}\big)\, \big(a_{ij} - b_{ij} \big)\,   \big(a_{kl} - b_{kl} \big)\\
 & = \left[ \sum_{i,j=1}^3 \big(  \tau a_{ij} + (1-\tau ) b_{ij}\big)\,   \big(a_{ij} - b_{ij}\big) \right]^2 \geq 0. 
 \end{split}
 \end{equation}
 Therefore, we conclude that
 $$ \mathbf{T}(A) - \mathbf{T} (B) : ( A - B) \,  \geq 2\,  \nu_0\,   |A - B|_F^2,$$
 which proves \eqref{T-mon}.

\bibliographystyle{plain}
\bibliography{lady.bib}

\end{document}